\documentclass[11pt,a4paper]{article}
\usepackage[margin=0.75in]{geometry}
\usepackage[utf8]{inputenc}
\usepackage{cmap}
\usepackage[T1]{fontenc}
\usepackage{lmodern}
\usepackage[square, numbers]{natbib}     %[square, numbers]
\usepackage[colorlinks=true]{hyperref}
\usepackage{amssymb, amsthm, amsmath, amsfonts}
\usepackage{authblk, dsfont, mathrsfs, color, bm, enumitem, mathtools}
\usepackage{subfigure, graphicx, transparent}
\usepackage{dirtytalk}
\graphicspath{{figures/}}
\usepackage[dvipsnames]{xcolor}
\usepackage{comment}
\hypersetup{colorlinks, linkcolor={red!80!black}, citecolor={blue!80!black}, urlcolor={ForestGreen}}
\linespread{1.25}

%============== Theorem style ================%
\theoremstyle{definition}
\newtheorem{theorem}{Theorem}[section]
\newtheorem{definition}[theorem]{Definition}
\newtheorem{assumption}[theorem]{Assumption}
\newtheorem{proposition}[theorem]{Proposition}
\newtheorem{corollary}[theorem]{Corollary}
\newtheorem{lemma}[theorem]{Lemma}
\newtheorem{remark}[theorem]{Remark}

%============== Math operators ================%

\DeclareUnicodeCharacter{211D}{\ensuremath{\mathbb R}}

%=============== Shortkeys ===================%
\DeclarePairedDelimiterX{\inner}[2]{\langle}{\rangle}{#1, #2}

\title{Stopping Rules for Stochastic Gradient Descent via Anytime-Valid Confidence Sequences}

\author{Liviu Aolaritei$^\dagger$}
\author{Michael I. Jordan$^{\dagger \ddagger}$}

\affil{$^\dagger$University of California, Berkeley, USA \\
$^\ddagger$Inria Paris, France \protect\\ \texttt{liviu.aolaritei@berkeley.edu, jordan@cs.berkeley.edu}}

\date{}

\begin{document}
\maketitle

\begin{abstract}
The problem of stopping stochastic gradient descent (SGD) in an online manner, based solely on the observed trajectory, is a challenging theoretical problem with significant consequences for applications. While SGD is routinely monitored as it runs, the classical theory of SGD provides guarantees only at pre-specified iteration horizons and offers no valid way to decide, based on the observed trajectory, when further computation is justified. We address this longstanding gap by developing anytime-valid confidence sequences for stochastic gradient methods, which remain valid under continuous monitoring and directly induce statistically valid, trajectory-dependent stopping rules: stop as soon as the current upper confidence bound on an appropriate performance measure falls below a user-specified tolerance. The confidence sequences are constructed using nonnegative supermartingales, are time-uniform, and depend only on observable quantities along the SGD trajectory, without requiring prior knowledge of the optimization horizon. In convex optimization, this yields anytime-valid certificates for weighted suboptimality of projected SGD under general stepsize schedules, without assuming smoothness or strong convexity. In nonconvex optimization, it yields time-uniform certificates for weighted first-order stationarity under smoothness assumptions. We further characterize the stopping-time complexity of the resulting stopping rules under standard stepsize schedules. To the best of our knowledge, this is the first framework that provides statistically valid, time-uniform stopping rules for SGD across both convex and nonconvex settings based solely on its observed trajectory.
\end{abstract}

%--------------------------------------------------------------------------------
%--------------------------------------------------------------------------------
%--------------------------------------------------------------------------------

\section{Introduction}
\label{sec:intro}

Deciding when to stop an optimization algorithm is often treated as a routine design choice. One selects a number of iterations, runs the algorithm, and accepts the resulting iterate. Yet this convention conceals a basic tension. The chosen horizon is necessarily arbitrary: had the algorithm converged more quickly, computation could have been saved; had progress been slower, additional iterations might have led to a meaningfully better solution. Standard optimization theory offers little guidance for resolving this tension while the algorithm is actually running.

This issue is particularly acute for stochastic gradient descent (SGD), which in practice is almost always monitored as it unfolds. Existing analyses of SGD characterize its behavior by answering a forward-looking question: if the algorithm is run for a prescribed number of iterations, what guarantees can be made about optimality or stationarity? In practice, however, users face the inverse problem. At a given iteration, after observing the realized trajectory of the algorithm, one would like to know whether continuing to iterate is justified statistically. This inversion of perspective lies at the heart of practical optimization, where computational budgets are finite and decisions must be made online.

A natural response is to monitor the algorithm’s progress as it unfolds and to stop once some criterion appears satisfactory. However, this adaptive monitoring fundamentally alters the statistical setting. Guarantees derived for fixed iteration horizons do not remain valid when the algorithm is inspected repeatedly and stopping decisions are based on the observed trajectory. Classical confidence bounds implicitly assume that the stopping rule is fixed in advance; once this assumption is violated, repeated checking induces multiple testing effects that invalidate standard statistical conclusions. As a result, widely used stopping heuristics, including fixed budgets, visually detected plateaus, or ad hoc thresholds, provide no quantifiable protection against premature or overly conservative termination.

The consequences are not merely theoretical. In large-scale learning, reinforcement learning, and scientific computing, the cost of unnecessary computation can be substantial, while stopping too early may leave significant performance gains unrealized. What is missing is a mechanism that translates the observed trajectory of SGD into statistically reliable, real-time evidence about progress, without requiring the stopping time to be specified in advance. This raises the central question of this paper:

\begin{center}
\emph{How close is the current iterate to optimality (or stationarity)?}
\end{center}

In this work, we address this gap by formulating stopping for SGD as a problem of sequential statistical inference. Our approach is based on anytime-valid confidence sequences, which are designed to remain valid under continuous monitoring and arbitrary data-dependent stopping. By adapting this framework to the dynamics of stochastic gradient methods, we obtain time-uniform, data-dependent certificates of performance that can be inspected at every iteration without invalidating their guarantees. These certificates directly give rise to simple, statistically valid stopping rules that depend only on the observed trajectory of the algorithm.

This framework applies to both convex and nonconvex optimization. In convex settings, it yields anytime-valid certificates for suboptimality that allow one to stop SGD with a guaranteed level of optimality. In nonconvex settings, where global optimality is unattainable in general, it yields time-uniform certificates for first-order stationarity. In both cases, the resulting stopping rules are compatible with standard stepsize schedules and preserve their validity under arbitrary adaptive monitoring.

%--------------------------------------------------------------------------------

\subsection{Problem formulation}
\label{subsec:problem:form}

We consider stochastic optimization problems of the form
\[
    \min_{x \in \mathcal{X}} f(x),
\]
where $\mathcal{X} \subseteq \mathbb{R}^d$ is a closed convex set and $f:\mathcal{X} \to \mathbb{R}$ is a (possibly nonconvex) objective function. Throughout, we study stochastic gradient descent with projection onto $\mathcal{X}$, defined recursively by
\begin{equation}
\label{eq:proj-sgd-update}
    x_{t+1} = \Pi_{\mathcal{X}}(x_t - \eta_t g_t)
    \qquad \forall t \ge 1,
\end{equation}
where $\Pi_{\mathcal{X}}$ denotes Euclidean projection onto $\mathcal{X}$, $\{g_t\}_{t\ge1}$ are stochastic gradients, and $\{\eta_t\}_{t\ge1}$ is a stepsize sequence. We define the natural filtration $\mathcal{F}_t := \sigma(x_1, g_1, \ldots, x_t, g_t)$, for $t \ge 1$, and assume that the stepsizes are predictable with respect to this filtration.

\begin{assumption}[Stepsizes]
\label{as:stepsizes}
The stepsizes $\{\eta_t\}_{t\ge1}$ are $\mathcal{F}_{t-1}$-measurable for all $t \ge 1$.
\end{assumption}

Assumption~\ref{as:stepsizes} allows for arbitrary data-dependent stepsize rules, provided they depend only on the past trajectory. In particular, it covers standard constant stepsizes, diminishing schedules such as $\eta_t=\eta_0\, t^{-1/2}$, classical stochastic approximation stepsizes satisfying $\sum_{t=1}^\infty \eta_t = \infty$ and $\sum_{t=1}^\infty \eta_t^2 < \infty$, as well as adaptive stepsizes that are functions of the observed iterates and gradients (e.g., AdaGrad-type schedules and related adaptive methods).

The stochastic gradients are assumed to satisfy the following conditions, which are common to both the convex and nonconvex settings.

\begin{assumption}[Stochastic gradients I]
\label{as:gradients}
For all $t\ge 1$, the stochastic gradient $g_t$ satisfies:
\begin{itemize}
    \item[(i)] \emph{Unbiasedness:} $\mathbb{E}[g_t \mid \mathcal{F}_{t-1}] = \nabla f(x_t)$, and

    \item[(ii)] \emph{Conditional sub-Gaussian noise:} there exists a constant $\sigma^2 > 0$ such that for $\xi_t \coloneqq g_t - \nabla f(x_t)$,
\begin{equation}
\label{eq:cond-subg-noise}
    \mathbb{E}\!\left[
        \exp\!\big(\lambda \langle u, \xi_t \rangle\big)
        \,\Big|\, \mathcal{F}_{t-1}
    \right]
    \le
    \exp\!\left(
        \frac{\lambda^2 \sigma^2 \|u\|^2}{2}
    \right)
    \qquad \text{almost surely }\forall u \in \mathbb{R}^d, \forall\lambda \in \mathbb{R}.
\end{equation}
\end{itemize}
\end{assumption}

Assumption~\ref{as:gradients}(i) is the standard unbiasedness condition used in stochastic approximation. %Assumption~\ref{as:gradients}(ii) ensures that the quadratic term $\|g_t\|^2$ remains controlled, which is classical in nonasymptotic analyses of SGD. 
Moreover, Assumption~\ref{as:gradients}(ii) postulates that the gradient noise is conditionally sub-Gaussian given the past. This condition provides quantitative tail control on the one-step fluctuations of the stochastic gradients and is natural in time-uniform concentration arguments; it is satisfied by bounded, Gaussian, and many light-tailed noise models encountered in practice. In particular, if $\|g_t\|\le G$ almost surely, then \eqref{eq:cond-subg-noise} holds with $\sigma^2 := 4G^2$.

For convenience, we introduce the cumulative quantities
\[
    S_t := \sum_{s=1}^t \eta_s,
    \qquad
    V_t := \sum_{s=1}^t \eta_s^2,
\]
which will appear repeatedly in the analysis. We now distinguish between the convex and nonconvex settings, which differ only in the choice of performance measure and in the additional assumptions imposed on the objective in the nonconvex case.

\bigskip
\noindent\textbf{Convex case.}
In the convex setting, we assume that $f$ is convex and that the set $\mathcal{X}$ is \emph{compact}. This assumption is used only to guarantee a uniform bound on the distance $\|x_t - x^\star\|$ along the SGD trajectory; see Remark~\ref{rmk:compactness} for a discussion of relaxed conditions. Let
\[
    x^\star \in \arg\min_{x \in \mathcal{X}} f(x)
\]
denote an arbitrary minimizer. Since all iterates satisfy $x_t \in \mathcal{X}$, they remain in a bounded neighborhood of $x^\star$:
\[
    \|x_t - x^\star\| \le R_x := \operatorname{diam}(\mathcal{X}) < \infty .
\]
To measure optimization progress, we focus on the weighted average suboptimality
\begin{equation}
\label{eq:weighted:subopt}
    \bar F_t
    :=
    \frac{1}{S_t}
    \sum_{s=1}^t \eta_s \bigl(f(x_s) - f(x^\star)\bigr).
\end{equation}
Defining the corresponding weighted average iterate
\[
    \bar x_t := \frac{1}{S_t} \sum_{s=1}^t \eta_s x_s,
\]
convexity of $f$ implies
\[
    f(\bar x_t) - f(x^\star) \le \bar F_t.
\]
Thus $\bar F_t$ simultaneously controls the performance of the averaged iterate and serves as the natural target for our anytime-valid analysis in the convex case.

\bigskip
\noindent\textbf{Nonconvex case.}
In the nonconvex setting, we take $\mathcal{X} = \mathbb{R}^d$ and impose no convexity or coercivity assumptions on the objective. We assume that $f : \mathbb{R}^d \to \mathbb{R}_+$ is differentiable. The nonnegativity assumption is made without loss of generality by shifting the objective if necessary. We do, however, require a standard smoothness condition on the objective.

\begin{assumption}[Smoothness]
\label{as:smoothness}
The function $f$ is $L$-smooth, that is,
\[
    \|\nabla f(x) - \nabla f(y)\| \le L \|x - y\|,
    \qquad \forall x,y \in \mathbb{R}^d .
\]
\end{assumption}

In the absence of convexity, optimality gaps are no longer an appropriate performance measure. Instead, we quantify progress through a weighted stationarity measure,
\begin{equation}
\label{eq:def-Gbar}
    \bar G_t
    :=
    \frac{1}{S_t}
    \sum_{s=1}^t \eta_s \|\nabla f(x_s)\|^2 .
\end{equation}
This quantity is standard in nonconvex stochastic optimization and captures the extent to which the algorithm has approached a stationary regime. Moreover, since the stepsizes are nonnegative, $\bar G_t$ upper bounds the smallest squared gradient norm encountered along the trajectory:
\[
    \min_{1 \le s \le t} \|\nabla f(x_s)\|^2 \le \bar G_t .
\]
We note that $\bar G_t$ is an \emph{operational} stationarity metric: while small gradient norms certify approximate first-order stationarity, they do not in general provide a calibrated notion of distance to a particular stationary point without additional structure. Our goal in the nonconvex setting is to construct time-uniform, data-dependent upper confidence bounds on $\bar G_t$ that remain valid under continuous monitoring and arbitrary data-dependent stopping.

%--------------------------------------------------------------------------------

\subsection{Contributions}
\label{subsec:contributions}

Our goal is to provide \emph{anytime-valid, data-dependent guarantees} for the progress of stochastic gradient descent across both convex and nonconvex optimization settings. Specifically, given the stochastic gradients $\{g_t\}$, the predictable stepsizes $\{\eta_t\}$, and the observed iterates $\{x_t\}$, we construct time-uniform upper confidence bounds for a suitable performance measure associated with the SGD trajectory. In the convex case, this measure is the weighted average suboptimality $\bar F_t$ defined in \eqref{eq:weighted:subopt}, while in the nonconvex case it is the weighted average stationarity measure $\bar G_t$ defined in \eqref{eq:def-Gbar}. In both settings, the resulting bounds remain valid simultaneously for all times $t \ge 1$, including under continuous monitoring and arbitrary data-dependent stopping. To state our contributions precisely, we recall the formal notion of an anytime-valid upper confidence sequence.

\begin{definition}[Anytime-valid upper confidence sequence]
\label{def:cs}
Let $\{\bar H_t\}_{t\ge 1}$ denote a nonnegative performance process, such as $\bar F_t$ in the convex case or $\bar G_t$ in the nonconvex case. A sequence of random variables $\{U_t(\alpha)\}_{t\ge 1}$ is an \emph{anytime-valid upper confidence sequence} for $\{\bar H_t\}$ if
\[
    \mathbb{P}\!\left(
        \forall t\ge 1:\ \bar H_t \le U_t(\alpha)
    \right)
    \ge 1-\alpha.
\]
\end{definition}

Such time-uniform bounds naturally induce statistically valid stopping rules. Given a target accuracy $\varepsilon>0$ and confidence level $\alpha\in(0,1)$, we define the stopping time
\[
    \tau_\varepsilon :=
    \inf\left\{ t\ge 1 : U_t(\alpha) \le \varepsilon \right\},
\]
which returns the first iterate that is \emph{certified} to have reached the desired performance level. In the convex case, this corresponds to $\varepsilon$-optimality, while in the nonconvex case it corresponds to $\varepsilon$-stationarity (up to the appropriate scaling of the performance measure).

\bigskip
\noindent\textbf{Main contributions.}
Our main contributions can be summarized as follows.

\begin{itemize}
\item[(i)] \textbf{Anytime-valid confidence sequences.}
Under the assumptions of Section~\ref{subsec:problem:form}, we construct anytime-valid upper confidence sequences $\{U_t(\alpha)\}_{t\ge 1}$, in the sense of Definition~\ref{def:cs}, for the weighted average suboptimality $\bar F_t$ in the convex setting and for the weighted average stationarity measure $\bar G_t$ in the nonconvex setting. The resulting bounds are trajectory-adaptive, depend only on quantities generated along the SGD path, and remain valid uniformly over time, including under continuous monitoring and data-dependent choices of stepsizes. In the convex case, compactness of the feasible set yields a fully observable confidence sequence depending only on realized stochastic gradient norms, the stepsizes, and the domain diameter. In the nonconvex case, the confidence sequence controls $\bar G_t$ and becomes fully observable under additional boundedness assumptions on $\|\nabla f(x_t)\|$. Finally, we characterize the rates of decay of these anytime-valid certificates under common stepsize choices, including (square-summable) stochastic approximation stepsizes and the canonical $\eta_t=\eta_0\, t^{-1/2}$ schedule, obtaining explicit bounds in expectation and almost surely along the realized SGD trajectory.

\item[(ii)] \textbf{Statistically valid stopping rules.}
The anytime-valid confidence sequences in (i) directly induce trajectory-dependent stopping rules that certify $\varepsilon$-optimality in the convex case and $\varepsilon$-stationarity in the nonconvex case with confidence $1-\alpha$, without requiring a predetermined optimization horizon. These guarantees hold for \emph{arbitrary predictable stepsize sequences}, including adaptive stepsizes that are functions of the observed iterates and stochastic gradients. To provide concrete insight into the resulting stopping behavior, we analyze the induced certified stopping rules by deriving expected stopping-time bounds (i.e., bounds on $\mathbb{E}[\tau_\varepsilon]$) under commonly used stepsize schedules, including $\eta_t=\eta_0 t^{-1/2}$ and more general stochastic approximation–type schedules.
\end{itemize}

%--------------------------------------------------------------------------------

\subsection{Related work}

Conceptually, this work sits at the intersection of two lines of research. The first concerns \emph{time-uniform statistical inference}, which develops hypothesis tests and confidence bounds that remain valid under continuous monitoring and arbitrary data-dependent stopping. The second concerns \emph{stochastic approximation}, which provides convergence guarantees for stochastic gradient methods in both convex and nonconvex optimization. The present paper brings these perspectives together by adapting nonnegative supermartingale techniques from time-uniform inference to the dynamics of stochastic gradient descent. Below, we position our contributions within the broader literature on time-uniform statistical inference and stochastic optimization.

\medskip

\noindent\textbf{Time-uniform inference and confidence sequences.}
Time-uniform (also called anytime-valid) inference provides uncertainty bounds that remain valid simultaneously over all times and are therefore robust to continuous monitoring and optional stopping. A foundational tool is Ville’s inequality for nonnegative supermartingales~\cite{ville1939}, which converts supermartingale constructions into time-uniform deviation guarantees. Early statistical instances of this idea include classical confidence sequences for basic parameters~\cite{darling1967confidence}, as well as boundary-crossing results for martingales~\cite{robbins1970boundary}. Modern developments systematize these principles through nonnegative supermartingale constructions and, in particular, mixture and stitching arguments, yielding sharp time-uniform concentration inequalities that adapt to variance-type processes~\cite{howard2021time}; see also related time-uniform Chernoff bounds based on exponential supermartingales~\cite{howard2020time} and the broader literature on self-normalized processes~\cite{de2009self}. More recently, these ideas have been reframed through e-values and game-theoretic statistics, providing a unified perspective on optional-stopping-safe inference and a comprehensive entry point to the literature~\cite{ramdas2023game,ramdas2025hypothesis}. While this framework has been developed primarily for sequential testing and estimation, it has begun to influence learning and decision-making problems, including anytime-valid inference for bandits and betting-based estimation~\cite{waudby2024anytime,waudby2024estimating}. Complementing time-uniform validity, expected stopping-time guarantees have also been studied in several strands of the sequential-testing literature. Classical results give bounds on the expected stopping time of the sequential probability ratio test \cite{wald1992sequential}. More recent work studies expected rejection times for bounded-mean hypothesis testing \cite{chugg2023auditing,chen2025optimistic}, as well as for two-sample and independence testing \cite{shekhar2023nonparametric}. Expected rejection-time guarantees have also been developed for sequential hypothesis testing over general classes of composite testing problems \cite{waudby2025universal}. Finally, expected stopping-time guarantees are central in best-arm identification in multi-armed bandits, where sequential tests aim to identify an optimal arm as quickly as possible \cite{garivier2016optimal,kaufmann2016complexity,kaufmann2021mixture,agrawal2020optimal,agrawal2021optimal}. Our work builds on this time-uniform machinery in an optimization context, using supermartingale mixtures to build SGD performance certificates that are valid under arbitrary, data-dependent stopping and that can also be paired with expected stopping-time analyses under standard stepsize regimes.

\medskip

\noindent\textbf{Stochastic approximation and asymptotic normality.}
Stochastic approximation provides the theoretical foundation for stochastic gradient methods. The seminal paper~\cite{robbins1951stochastic} established almost sure convergence of iterative schemes under diminishing stepsizes, while~\cite{kushner2003stochastic} developed a comprehensive asymptotic theory based on ordinary differential equations, covering stability, asymptotic normality, and efficiency. A key advance showed that appropriately averaged iterates achieve asymptotically optimal covariance, providing a principled justification for studying \emph{weighted averages} (which we adopt in this paper) of SGD trajectories~\cite{polyak1992acceleration}. This asymptotic  perspective was sharpened in~\cite{duchi2021asymptotic}, which characterized minimax and asymptotically efficient rates for stochastic optimization, including constrained formulations, which was further extended to nonsmooth stochastic approximation in~\cite{davis2024asymptotic}. Together, these results motivate the use of averaged iterates and enable asymptotic confidence regions for optimal solutions based on limiting normality. However, such guarantees are inherently asymptotic: they do not yield finite-time confidence sets, nor do they provide time-uniform validity under continuous monitoring or adaptive stopping.

\medskip

\noindent\textbf{Fixed-horizon optimality guarantees for convex SGD.}
A vast literature develops \emph{nonasymptotic} convergence guarantees for stochastic gradient (and mirror-descent–type) methods that hold with high probability at a \emph{pre-specified} iteration horizon. Early robust frameworks for stochastic convex optimization established high-probability bounds under bounded or sub-Gaussian noise assumptions, providing optimal oracle complexity guarantees for stochastic approximation and stochastic programming~\cite{nemirovski2009robust,lan2012optimal}. Related work also studied statistical validation of mirror-descent–type stochastic approximation methods, providing confidence bounds for the optimal objective value at a fixed iteration horizon rather than under adaptive or continuously monitored stopping rules~\cite{lan2012validation}. Parallel developments in machine learning emphasized refined nonasymptotic analyses of stochastic approximation algorithms, including explicit bias--variance tradeoffs and finite-time rates for averaged iterates~\cite{moulines2011non,bach2013non,dieuleveut2016nonparametric}. Subsequent work sharpened fixed-horizon high-probability bounds across a range of regimes, including nonsmooth and strongly convex objectives, and clarified when optimal rates can be achieved by averaging versus last-iterate solutions~\cite{rakhlin2012making,shamir2013stochastic,harvey2019tight,harvey2019simple,jain2021making}. Complementary information-theoretic lower bounds characterize the fundamental limits of stochastic convex optimization at fixed confidence levels~\cite{agarwal2012information}. More recent contributions extend fixed-horizon high-probability guarantees beyond light-tailed noise models, including heavy-tailed settings handled via gradient clipping and related robustification techniques~\cite{gorbunov2020stochastic}, as well as general boosting and robust-estimation wrappers that convert low-probability guarantees into high-confidence ones~\cite{davis2021low}. Together, these works provide sharp finite-time guarantees at prescribed horizons and confidence levels. However, these guarantees are intrinsically fixed-time: they are stated for pre-specified horizons and do not remain valid under continuous monitoring or data-dependent stopping without additional union bounds or restarts.

\medskip

\noindent\textbf{Fixed-horizon stationarity guarantees for nonconvex SGD.}
Theoretical guarantees for stochastic gradient methods in \emph{nonconvex} optimization have been studied extensively, with performance typically quantified through first-order stationarity criteria, most commonly bounds on the \emph{squared} gradient norm $\|\nabla f(x)\|^2$ evaluated at a prescribed iteration horizon via a suitable output rule. Foundational stochastic approximation theory establishes almost sure convergence of iterates to stationary sets under diminishing stepsizes and appropriate noise conditions, providing the conceptual basis for modern analyses in nonconvex stochastic optimization~\cite{robbins1951stochastic,kushner2003stochastic,benaim2006dynamics,borkar2008stochastic}. For smooth nonconvex objectives, a widely adopted nonasymptotic benchmark is an $O(t^{-1/2})$ rate for approximate stationarity under unbiased stochastic gradients and bounded-variance or light-tailed noise. Such guarantees are typically stated in expectation for a prescribed output iterate, such as a randomly selected or averaged iterate among the first $t$ iterates, or equivalently in terms of the minimum stationarity measure attained up to time $t$. This benchmark was crystallized and systematically analyzed in influential works~\cite{ghadimi2013stochastic,ghadimi2016accelerated}, and has since served as a reference point for a broad range of subsequent contributions; see also modern syntheses such as~\cite{bottou2018optimization,lan2020first}. Beyond vanilla SGD, extensive work studies refined oracle-complexity guarantees for nonconvex problems, including variance-reduced and recursive-gradient methods for finite-sum or stochastic objectives~\cite{reddi2016stochastic,lei2017less,nguyen2017sarah,fang2018spider,li2021page}, algorithms designed to escape saddle points~\cite{ge2015escaping,jin2017escape,allen2018natasha}, and adaptive or momentum-based schemes in nonconvex landscapes~\cite{zaheer2018adaptive,ward2020adagrad}. Complementary results establish nonasymptotic stationarity guarantees in nonconvex optimization through different mechanisms, including high-probability bounds obtained via gradient clipping or other robustification techniques under heavy-tailed noise~\cite{cutkosky2021high,li2022high}, adaptive first-order methods with refined concentration analyses~\cite{kavis2022high}, and expectation-based analyses of vanilla SGD under heavy-tailed stochastic gradients~\cite{fatkhullin2025can}. While this body of work provides sharp convergence and oracle-complexity guarantees at fixed, pre-specified horizons, it does not address the statistical validity of adaptive stopping decisions based on continuously monitored optimization trajectories.

\medskip

\noindent\textbf{Closest related work.}
The works most closely related to ours study anytime or time-uniform guarantees for stochastic approximation, but differ substantially in assumptions and in the form of the resulting guarantees. Classical results such as \cite{rakhlin2012making} establish high-probability convergence bounds for SGD under strong convexity and bounded stochastic gradients, with guarantees stated uniformly over iterations up to a pre-specified horizon. As such, they do not support data-dependent stopping times. More recent work \cite{feng2023anytime} derives anytime high-probability bounds for stochastic gradient methods with momentum under convexity and smoothness assumptions, using algorithm-specific dynamics and stepsize choices motivated by continuous-time ODE analysis. However, these results are not formulated in terms of trajectory-observable confidence sequences and are not used to derive certified stopping rules. Finally, \cite{yu2025root} studies one-dimensional stochastic root finding and proposes a bisection-based procedure that yields time-uniform confidence intervals for the unknown root, but their guarantees concern inference on the unknown root rather than certificates of optimization suboptimality for projected SGD. In contrast to these works, our results apply to plain projected SGD under general convexity, yield fully explicit and trajectory-dependent confidence sequences for suboptimality, and lead to provably correct stopping rules that remain valid under arbitrary adaptive monitoring.

%--------------------------------------------------------------------------------

\subsection{Organization and notation}

\noindent\textbf{Organization.}
Section~\ref{sec:anytime:CS} develops anytime-valid upper confidence sequences for the weighted average suboptimality in the convex case and for a weighted stationarity measure in the nonconvex case, including fully observable variants, and studies their decay rates under common stepsize schedules. Section~\ref{sec:stopping} uses these confidence sequences to construct statistically valid, trajectory-dependent stopping rules and analyzes the resulting stopping-time complexity under common stepsize schedules. Section~\ref{sec:proofs} contains the proofs of the main results. Finally, the Appendix collects additional proofs and supporting technical lemmas.

\medskip

\noindent\textbf{Notation.}
For a vector $x\in\mathbb{R}^d$, $\|x\|$ denotes the Euclidean norm and $\langle x,y\rangle$ the standard inner product. For a closed convex set $\mathcal{X}\subset\mathbb{R}^d$, $\Pi_{\mathcal{X}}(z)$ denotes the Euclidean projection of $z$ onto $\mathcal{X}$. Given a filtration ${\mathcal{F}_t}$, a sequence $\{X_t\}$ is $\{\mathcal{F}_t\}$-adapted if $X_t$ is $\mathcal{F}_t$-measurable, and is predictable if $X_t$ is $\mathcal{F}_{t-1}$-measurable. Conditional expectations are written $\mathbb{E}[\cdot \mid \mathcal{F}_{t-1}]$. We write $\inf\varnothing=\infty$. For two nonnegative sequences $\{a_t\}$ and $\{b_t\}$, we write $a_t=O(b_t)$ if there exists a constant $C<\infty$ such that $a_t\le C,b_t$ for all $t$; we write $a_t\lesssim b_t$ synonymously with $a_t=O(b_t)$, and $a_t\asymp b_t$ if $a_t\lesssim b_t$ and $b_t\lesssim a_t$. Unless stated otherwise, implicit constants in $O(\cdot)$, $\lesssim$, and $\asymp$ may depend on fixed problem parameters but are independent of the time index $t$ (and independent of $\varepsilon$ in stopping-time bounds). Throughout, probabilities and expectations are taken with respect to the randomness in the stochastic gradients.

%--------------------------------------------------------------------------------
%--------------------------------------------------------------------------------
%--------------------------------------------------------------------------------

\section{Anytime-Valid Confidence Sequences}
\label{sec:anytime:CS}

In this section we develop anytime-valid confidence sequences for stochastic gradient descent that remain valid under continuous monitoring and arbitrary data-dependent stopping. The constructions are \emph{trajectory-adaptive}, depending on quantities generated along the SGD path rather than on fixed global problem parameters. We treat the convex and nonconvex settings separately. In the convex case, we derive confidence sequences for weighted average suboptimality of projected SGD. In the nonconvex case, we obtain time-uniform certificates for weighted first-order stationarity. Both constructions admit fully observable variants suitable for online use.

%--------------------------------------------------------------------------------

\subsection{Convex case: suboptimality certificates}
\label{subsec:CS:convex}

In this section we derive an adaptive, anytime-valid confidence sequence for the weighted average suboptimality of projected SGD. Our goal is to construct, for each confidence level $\alpha\in(0,1)$, an upper confidence sequence $\{U_t(\alpha)\}$ in the sense of Definition~\ref{def:cs} such that $\{\forall t\ge 1:\ \bar F_t \le U_t(\alpha)\}$ holds with probability $1-\alpha$, where $\bar F_t$ is the weighted suboptimality defined in \eqref{eq:weighted:subopt}. A central feature of our construction is \emph{adaptivity}: the bound adjusts to the realized optimization trajectory through the distances $\|x_t - x^\star\|$ and the squared stochastic gradient norms $\|g_t\|^2$, rather than relying solely on global parameters such as $R_x$ and $G$. A simple corollary will later convert this adaptive bound into a fully observable version involving only $R_x$, the realized quantities $\|g_t\|^2$, and the stepsizes.

To state the main result, we introduce the squared distance process
\[
    Z_t := \|x_t - x^\star\|^2,
\]
and a trajectory-dependent quantity that enters the width of the confidence sequence: for each $t\ge 1$, define
\begin{equation}
\label{eq:sigma-Sigma-def}
    \sigma_t^2 := \sigma^2 \eta_t^2 Z_t,
    \qquad
    \Sigma_t^2 := \sum_{s=1}^t \sigma_s^2,
    \qquad
    \Sigma_{t,\mathrm{eff}}^2
    :=
    \max\!\left\{
        \Sigma_t^2,\;
        2\Big(\log \tfrac{2}{\alpha}+1\Big)
    \right\},
\end{equation}
where $\sigma^2 > 0$ is the sub-Gaussian variance proxy from Assumption~\ref{as:gradients}(ii). The quantity $\Sigma_{t,\mathrm{eff}}^2$ will be referred to as the \emph{effective cumulative variance proxy}. It coincides with the true cumulative variance proxy $\Sigma_t^2$ once the latter exceeds a fixed threshold, and serves to prevent small-variance degeneracies in the time-uniform concentration argument. Larger values of $\Sigma_{t,\mathrm{eff}}^2$ correspond to trajectories with larger stepsizes or larger distances to the optimum, and therefore lead to wider confidence sequences. Although $Z_t$ is not directly observable, it is always bounded by $R_x^2$, and keeping it explicit highlights which components of the trajectory influence the final bound. A fully observable version of the confidence sequence is provided in Corollary~\ref{cor:observable-CS} by replacing $Z_t$ with $R_x^2$.

Our confidence sequence will be stated in terms of the following data-dependent upper boundary:
\begin{equation}
\label{eq:U-def}
    U_t(\alpha)
    :=
    \frac{1}{2S_t}
    \Bigg(
        7\,\sqrt{\Sigma_{t,\mathrm{eff}}^2\Big(\log\tfrac{2}{\alpha} + \log\log(\mathrm e + \Sigma_{t,\mathrm{eff}}^2)\Big)}
        + Z_1
        + \sum_{s=1}^t \eta_s^2 \|g_s\|^2
    \Bigg).
\end{equation}
This boundary combines three sources of variability: the effective cumulative variance proxy $\Sigma_{t,\mathrm{eff}}^2$, the initial distance $Z_1$, and the accumulated squared gradient steps $\sum_{s=1}^t \eta_s^2\|g_s\|^2$. The leading term involving $\Sigma_{t,\mathrm{eff}}^2$ captures the stochastic fluctuations of the process and stems from a time-uniform concentration inequality applied to an adapted sequence satisfying conditional sub-Gaussian tail bounds (see the proof of Theorem~\ref{thm:anytime-suboptimality}). In contrast, the terms $Z_1$ and $\sum_{s=1}^t \eta_s^2\|g_s\|^2$ come directly from the structure of projected SGD, and appear when the distance recursion is telescoped. The next theorem establishes that $U_t(\alpha)$ forms an anytime-valid upper confidence sequence for the weighted suboptimality process $\{\bar F_t\}_{t\ge 1}$.

\begin{theorem}[Adaptive anytime-valid suboptimality bound]
\label{thm:anytime-suboptimality}
Suppose Assumptions~\ref{as:stepsizes} and \ref{as:gradients} hold. Then, for every $\alpha\in(0,\tfrac{2}{\mathrm{e}})$, the process $\{U_t(\alpha)\}_{t\ge 1}$ defined in~\eqref{eq:U-def} satisfies
\[
    \mathbb{P}\!\left(
        \forall t\ge 1:\ \bar F_t \le U_t(\alpha)
    \right)
    \;\ge\; 1-\alpha,
\]
where $\bar F_t$ is the weighted suboptimality in \eqref{eq:weighted:subopt}. In particular, $\{U_t(\alpha)\}_{t\ge 1}$ is an anytime-valid upper confidence sequence for $\{\bar F_t\}_{t\ge 1}$ in the sense of Definition~\ref{def:cs}.
\end{theorem}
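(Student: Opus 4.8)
The plan is to reduce the theorem to a single time-uniform martingale concentration bound, obtained by telescoping the projected-SGD distance recursion, and then to control that martingale through a nonnegative supermartingale argument combined with Ville's inequality.

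First I would establish the deterministic backbone. Writing $Z_t = \|x_t-x^\star\|^2$ and using that $x^\star\in\mathcal X$ is a fixed point of $\Pi_{\mathcal X}$ together with nonexpansiveness of the Euclidean projection, the update \eqref{eq:proj-sgd-update} gives
\[
    Z_{t+1} \le \|x_t - \eta_t g_t - x^\star\|^2 = Z_t - 2\eta_t\langle g_t, x_t - x^\star\rangle + \eta_t^2\|g_t\|^2.
\]
Decomposing $g_t = \nabla f(x_t) + \xi_t$ and invoking convexity, $\langle \nabla f(x_t), x_t-x^\star\rangle \ge f(x_t) - f(x^\star)$, I would rearrange to
\[
    2\eta_t\big(f(x_t) - f(x^\star)\big) \le Z_t - Z_{t+1} + \eta_t^2\|g_t\|^2 + 2\eta_t\langle \xi_t, x^\star - x_t\rangle.
\]
Summing over $s = 1,\dots,t$, telescoping the distance terms, and discarding $-Z_{t+1}\le 0$ yields
\[
    2 S_t \bar F_t \le Z_1 + \sum_{s=1}^t \eta_s^2\|g_s\|^2 + 2 M_t, \qquad M_t := \sum_{s=1}^t \eta_s\langle \xi_s, x^\star - x_s\rangle.
\]
Comparing with \eqref{eq:U-def}, it suffices to show that $M_t \le \tfrac{C}{2}\sqrt{\Sigma_{t,\mathrm{eff}}^2\big(\log\tfrac{2}{\alpha} + \log\log(\mathrm e + \Sigma_{t,\mathrm{eff}}^2)\big)}$ for all $t\ge 1$ simultaneously, with probability at least $1-\alpha$.

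Next I would isolate the martingale structure. Since $x_s$ is $\mathcal F_{s-1}$-measurable (it is determined by $x_1,g_1,\dots,g_{s-1}$) and $\mathbb E[\xi_s\mid\mathcal F_{s-1}] = 0$, the process $M_t$ is a martingale. Applying Assumption~\ref{as:gradients}(iii) with the predictable vector $u = x^\star - x_s$ shows that each increment is conditionally sub-Gaussian,
\[
    \mathbb E\!\left[\exp\!\big(\lambda\,\eta_s\langle\xi_s, x^\star - x_s\rangle\big)\,\middle|\,\mathcal F_{s-1}\right] \le \exp\!\Big(\tfrac{\lambda^2}{2}\,\sigma^2\eta_s^2 Z_s\Big) = \exp\!\Big(\tfrac{\lambda^2}{2}\sigma_s^2\Big),
\]
exactly matching the variance proxy $\sigma_s^2$ in \eqref{eq:sigma-Sigma-def}; moreover $\Sigma_t^2 = \sum_s\sigma_s^2$ is predictable because each $Z_s$ is $\mathcal F_{s-1}$-measurable. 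Consequently, for each fixed $\lambda$, the exponential process $\exp(\lambda M_t - \tfrac{\lambda^2}{2}\Sigma_t^2)$ is a nonnegative supermartingale with unit initial value, which is the object to which Ville's inequality applies.

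Finally, I would convert this family of supermartingales into the stated curved, variance-adaptive boundary. The plan is to invoke a time-uniform sub-Gaussian concentration inequality of the stitched/mixture type (as in Howard et al.~\cite{howard2021time}), built by combining the exponential supermartingales over a geometric grid of $\lambda$-values (or via a conjugate mixture) and applying Ville's inequality, to obtain $M_t \lesssim \sqrt{\Sigma_t^2\big(\log\log\Sigma_t^2 + \log\tfrac{1}{\alpha}\big)}$ uniformly in $t$. The effective variance $\Sigma_{t,\mathrm{eff}}^2$ and its threshold $2(\log\tfrac2\alpha + 1)$ regularize the small-variance regime where the iterated-logarithm term would otherwise misbehave, while the a priori upper bound $\Sigma_t^2 \le \widetilde\Sigma_\infty^2 = \sigma^2 R_x^2 V_\infty$ (from $Z_s\le R_x^2$ and $V_t\le V_\infty$) bounds the number of stitching epochs, which is precisely what pins down the explicit constant $C$ in \eqref{eq:constant:C}. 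Substituting the resulting bound on $M_t$ into the deterministic recursion and dividing by $2S_t$ gives $\bar F_t \le U_t(\alpha)$ for all $t$ on the same probability-$(1-\alpha)$ event. The main obstacle I anticipate lies in this last step: producing the concentration inequality with the exact $\log\log$ form and a fully explicit constant matching \eqref{eq:constant:C}, since the stitching must be calibrated to the range $[\,2(\log\tfrac2\alpha+1),\,\widetilde\Sigma_{\infty,\mathrm{eff}}^2\,]$ of the cumulative variance proxy, and one must verify that the degenerate small-variance case is absorbed cleanly by $\Sigma_{t,\mathrm{eff}}^2$.
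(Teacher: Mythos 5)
Your proposal is correct and follows essentially the same route as the paper: the same projection-nonexpansiveness plus convexity decomposition with variance proxy $\sigma_s^2 = \sigma^2\eta_s^2 Z_s$, followed by a geometric-grid mixture of exponential supermartingales and Ville's inequality; the only cosmetic difference is that you telescope first and work directly with the martingale $M_t = \sum_s \eta_s\langle\xi_s, x^\star - x_s\rangle$, whereas the paper packages everything into the dominated process $X_t$ of \eqref{eq:Xt-def-proj} before telescoping, and the factor-of-two bookkeeping ($\bar X_t \le 2M_t$) makes the two equivalent with matching constants. The stitching step you flag as the main obstacle is exactly what the paper executes in Lemma~\ref{lem:cs-X-full}, choosing $k(v) = \lfloor \log(v/(L_\alpha+L_v))\rfloor$ and using Lemmas~\ref{lem:supporting:bound-loglog} and \ref{lem:explicit-threshold} to pin down $C$ via $\widetilde{\Sigma}_{\infty,\mathrm{eff}}^2$, precisely along the lines you anticipate.
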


Before turning to the proof, we record an immediate consequence of Theorem~\ref{thm:anytime-suboptimality}. The theorem itself produces an adaptive confidence sequence that depends on the (unobservable) distances $\|x_t - x^\star\|$ and on the realized stochastic gradients. In practice, one often wishes to work with fully observable bounds, expressed only in terms of the known constraint radius $R_x$, the realized squared gradient norms $\|g_t\|^2$, and the predictable stepsizes $\{\eta_t\}$. The following corollary provides such a variant.

\begin{corollary}[Observable anytime-valid suboptimality bound]
\label{cor:observable-CS}
Under the conditions of Theorem~\ref{thm:anytime-suboptimality}, suppose in addition that $\mathcal{X}$ satisfies $\|x - x^\star\| \le R_x$ for all $x \in \mathcal{X}$. Define
\[
    \widetilde{\sigma}_t^2 := \sigma^2 \eta_t^2 R_x^2,
    \qquad
    \widetilde{\Sigma}_t^2 := \sum_{s=1}^t \widetilde{\sigma}_s^2
    = \sigma^2 R_x^2 V_t,
    \qquad
    \widetilde{\Sigma}_{t,\mathrm{eff}}^2
    :=
    \max\!\left\{
        \widetilde{\Sigma}_t^2,\;
        2\Big(\log \tfrac{2}{\alpha}+1\Big)
    \right\}.
\]
Then for every $\alpha\in(0,\tfrac{2}{\mathrm{e}})$ the process
\[
    U_t^{\mathrm{obs}}(\alpha)
    :=
    \frac{1}{2 S_t}
    \Bigg(
        7\sqrt{
            \widetilde{\Sigma}_{t,\mathrm{eff}}^2\!
            \left(
                \log\tfrac{2}{\alpha}
                + \log\log(\mathrm e + \widetilde{\Sigma}_{t,\mathrm{eff}}^2)
            \right)}
        + R_x^2
        + \sum_{s=1}^t \eta_s^2 \|g_s\|^2
    \Bigg)
\]
is an anytime-valid upper confidence sequence for $\{\bar F_t\}_{t\ge 1}$.
\end{corollary}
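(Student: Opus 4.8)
The plan is to prove the corollary by \emph{pointwise domination}: I will show that on every sample path and for every $t \ge 1$, the observable boundary $U_t^{\mathrm{obs}}(\alpha)$ is at least as large as the adaptive boundary $U_t(\alpha)$ from Theorem~\ref{thm:anytime-suboptimality}. Once this is established, the event inclusion $\{\forall t\ge 1:\ \bar F_t \le U_t(\alpha)\} \subseteq \{\forall t\ge 1:\ \bar F_t \le U_t^{\mathrm{obs}}(\alpha)\}$ is immediate, and monotonicity of probability together with Theorem~\ref{thm:anytime-suboptimality} yields the required $1-\alpha$ coverage in the sense of Definition~\ref{def:cs}. Note first that the constant $C$ in \eqref{eq:constant:C} is a fixed deterministic quantity, defined through $\widetilde{\Sigma}_{\infty,\mathrm{eff}}^2$, hence identical in $U_t(\alpha)$ and $U_t^{\mathrm{obs}}(\alpha)$, so it plays no role in the comparison.

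Next I would record the elementary termwise domination of the variance proxies. Since every iterate satisfies $\|x_t - x^\star\| \le R_x$, we have $Z_t = \|x_t - x^\star\|^2 \le R_x^2$, whence $\sigma_t^2 = \sigma^2\eta_t^2 Z_t \le \sigma^2 \eta_t^2 R_x^2 = \widetilde{\sigma}_t^2$. Summing gives $\Sigma_t^2 \le \widetilde{\Sigma}_t^2$, and since the effective truncation in \eqref{eq:sigma-Sigma-def} applies the same threshold $2(\log\tfrac{2}{\alpha}+1)$ to both, it follows that $\Sigma_{t,\mathrm{eff}}^2 \le \widetilde{\Sigma}_{t,\mathrm{eff}}^2$. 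Likewise $Z_1 = \|x_1 - x^\star\|^2 \le R_x^2$, and the accumulated gradient term $\sum_{s=1}^t \eta_s^2\|g_s\|^2$ is literally identical in both boundaries.

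The only substantive step is to transfer the inequality $\Sigma_{t,\mathrm{eff}}^2 \le \widetilde{\Sigma}_{t,\mathrm{eff}}^2$ through the nonlinear leading term. I would verify that the map $\Psi(v) := \sqrt{v\big(\log\tfrac{2}{\alpha} + \log\log(\mathrm e + v)\big)}$ is nondecreasing on $[0,\infty)$. Because $\alpha \in (0,\tfrac{2}{\mathrm e})$ gives $\log\tfrac{2}{\alpha} > 1 > 0$, and since $\mathrm e + v \ge \mathrm e$ forces $\log(\mathrm e + v) \ge 1$ and hence $\log\log(\mathrm e + v) \ge 0$ and nondecreasing in $v$, the bracketed factor is a nonnegative nondecreasing function of $v$; multiplying by the nonnegative nondecreasing factor $v$ and taking square roots preserves monotonicity. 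Applying $\Psi$ to $\Sigma_{t,\mathrm{eff}}^2 \le \widetilde{\Sigma}_{t,\mathrm{eff}}^2$ bounds the leading term of $U_t(\alpha)$ by that of $U_t^{\mathrm{obs}}(\alpha)$. Combining this with $Z_1 \le R_x^2$, the common gradient term, the common constant $C>0$, and the common positive prefactor $\tfrac{1}{2S_t}$ gives $U_t(\alpha) \le U_t^{\mathrm{obs}}(\alpha)$ for all $t\ge 1$ on every trajectory, which completes the argument.

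I expect no genuine obstacle here, since the result is simply a monotone relaxation of Theorem~\ref{thm:anytime-suboptimality} obtained by replacing the unobservable $Z_t$ with its uniform bound $R_x^2$. The only point requiring care is the monotonicity of $\Psi$, and in particular of $v\mapsto v\log\log(\mathrm e+v)$ near the origin; this is precisely where one invokes $\alpha < \tfrac{2}{\mathrm e}$, which keeps $\log\tfrac{2}{\alpha}$ strictly positive and guarantees the bracketed factor never becomes negative as $v \to 0$.
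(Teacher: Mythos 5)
Your proposal is correct and is precisely the argument the paper intends (and leaves implicit): replace $Z_t$ by its uniform bound $R_x^2$, note $\Sigma_{t,\mathrm{eff}}^2 \le \widetilde{\Sigma}_{t,\mathrm{eff}}^2$ and $Z_1 \le R_x^2$, and conclude $U_t(\alpha) \le U_t^{\mathrm{obs}}(\alpha)$ pathwise, so the coverage event for Theorem~\ref{thm:anytime-suboptimality} is contained in that for $U_t^{\mathrm{obs}}(\alpha)$. Your explicit verification that $v \mapsto \sqrt{v\bigl(\log\tfrac{2}{\alpha} + \log\log(\mathrm e + v)\bigr)}$ is nondecreasing, and that the constant $C$ is deterministic and common to both boundaries, supplies exactly the details the paper omits.
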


Note that $U_t^{\mathrm{obs}}(\alpha)$ is $\mathcal{F}_t$-measurable and can be computed online at each iteration using only observable information. The following remark shows that the $1/S_t$ normalization appearing in the bound
is unavoidable from a time-uniform perspective, even in simple deterministic
settings. While the numerator in $U_t(\alpha)$ may exhibit different behavior
depending on the stepsize regime and the realized stochastic gradients,
no anytime-valid confidence sequence can shrink uniformly faster than order
$1/S_t$ in general.

\begin{remark}[Unavoidability of the $1/S_t$ normalization]
\label{rem:lower-1d}
The $1/S_t$ \emph{normalization} in Theorem~\ref{thm:anytime-suboptimality} is, in general, unavoidable from a time-uniform (anytime-valid) perspective, even in a very simple one-dimensional deterministic setting. Fix any stepsizes $\{\eta_t\}_{t\ge 1}$ with $\eta_t>0$, $\sum_t \eta_t = \infty$, and $\sum_t \eta_t^2 < \infty$, and consider the quadratic model
\[
    f(x) := \frac{\mu}{2}x^2,\qquad \mu\in(0,1),
\]
with minimizer $x^\star = 0$, $f(x^\star) = 0$, and deterministic oracle $g_t = \nabla f(x_t) = \mu x_t$.  Starting from $x_1 = x_0 \neq 0$, the projected SGD recursion reduces to the deterministic update $x_{t+1} = x_t - \eta_t g_t$, and the weighted cumulative suboptimality
\[
    A_t := \sum_{s=1}^t \eta_s \Big(f(x_s)-f(x^\star)\Big)
\]
is nondecreasing with
\[
    A_t \;\ge\; A_1
    = \eta_1 \frac{\mu}{2}x_1^2 \;>\; 0
    \qquad\text{for all } t\ge 1.
\]
For any anytime-valid upper confidence sequence $\{U_t(\alpha)\}_{t\ge 1}$ over a model class containing this example, validity on this deterministic path forces $A_t \le S_t U_t(\alpha)$ for all $t$, and hence
\[
    S_t U_t(\alpha) \;\ge\; A_1
    \qquad\text{for all } t\ge 1.
\]
Thus, even in this benign one-dimensional model, time-uniform validity forces $U_t(\alpha)$ to scale no better than order $1/S_t$ along this path. This example illustrates that the appearance of the factor $1/S_t$ in time-uniform (anytime-valid) upper bounds is necessary. \hfill $\clubsuit$
\end{remark}

The following remark clarifies that the assumption $\|x - x^\star\| \le R_x$
for all $x \in \mathcal X$ is sufficient but not necessary for constructing an
observable confidence sequence.

\begin{remark}[Compactness of $\mathcal X$]
\label{rmk:compactness}
The compactness of $\mathcal X$ is \emph{not} required for the validity of Theorem~\ref{thm:anytime-suboptimality}, which holds without any boundedness assumption on the iterates. Compactness is used only to obtain the \emph{observable} confidence sequence in Corollary~\ref{cor:observable-CS}, where a uniform bound $R_x$ on the distances $\|x_t - x^\star\|$ along the SGD trajectory is needed. Projection onto a compact set provides a simple and explicit mechanism for ensuring such a bound. However, both compactness and projection can be omitted whenever an \emph{a priori} bound $\|x_t - x^\star\|\le R_x$ is known to hold along the trajectory, for instance under stability conditions that guarantee the iterates remain in a bounded region. \hfill $\clubsuit$
\end{remark}

%--------------------------------------------------------------------------------

\subsection{Nonconvex case: stationarity certificates}
\label{subsec:CS:nonconvex}

In this subsection we extend the confidence-sequence framework to nonconvex SGD, where progress is quantified through first-order stationarity. Throughout this section, we consider \emph{unconstrained} optimization over $\mathcal{X}=\mathbb{R}^d$ with a differentiable, globally smooth, and nonnegative objective, so that the SGD recursion takes the standard form without projection. For a given confidence level $\alpha\in(0,1)$, our goal is to construct a time-uniform upper confidence sequence $\{W_t(\alpha)\}_{t\ge1}$ for the weighted average stationarity measure $\bar G_t$ defined in~\eqref{eq:def-Gbar}. By Definition~\ref{def:cs}, this ensures that the bound $\bar G_t \le W_t(\alpha)$ holds simultaneously for all $t\ge1$ with probability at least $1-\alpha$, even under continuous monitoring and adaptive stopping.

As in the convex case, the construction is \emph{trajectory-adaptive}, but the source of adaptivity differs. In the nonconvex setting, the width of the confidence sequence is driven by the realized gradient norms rather than distances to an optimizer. This reflects the geometry of nonconvex optimization, where stationarity replaces optimality as the relevant notion of progress. Consequently, the certificate tightens automatically when SGD enters low-gradient regions and becomes more conservative along trajectories exhibiting larger stochastic variation or larger gradients.

To state the main result, we introduce a trajectory-dependent variance proxy that governs the width of the confidence sequence. For each $t \ge 1$, define
\begin{equation}
\label{eq:sigma-Sigma-def:nonconvex}
    \nu_t^2 := \sigma^2 \eta_t^2 \|\nabla f(x_t)\|^2,
    \qquad
    \Gamma_t^2 := \sum_{s=1}^t \nu_s^2,
    \qquad
    \Gamma_{t,\mathrm{eff}}^2
    :=
    \max\!\left\{
        \Gamma_t^2,\;
        2\Big(\log \tfrac{2}{\alpha}+1\Big)
    \right\}.
\end{equation}
Here $\sigma^2>0$ is the sub-Gaussian variance proxy from Assumption~\ref{as:gradients}. As in the convex case, we refer to $\Gamma_{t,\mathrm{eff}}^2$ as the \emph{effective cumulative variance proxy}. The effective version coincides with the true cumulative variance $\Gamma_t^2$ once the latter exceeds a fixed threshold, and serves to prevent small-variance degeneracies in the construction of uniform confidence boundaries.

The quantity $\Gamma_t^2$ depends directly on the squared gradient norms encountered along the trajectory and on the stepsizes, and thus captures the intrinsic stochastic scale of the observed SGD path. Larger gradients or larger stepsizes lead to larger values of $\Gamma_t^2$ and consequently to wider confidence sequences, while trajectories that enter low-gradient regimes induce tighter bounds. At this stage, $\Gamma_t^2$ is not assumed to be observable, as it involves the true gradients $\nabla f(x_t)$. We keep this dependence explicit in order to highlight which aspects of the optimization trajectory determine the statistical resolution of the certificate. In many applications, one may have access to an a priori bound on $\|\nabla f(x_t)\|$ along the trajectory; in such cases, $\Gamma_t^2$ can be upper bounded by an observable proxy, yielding a fully implementable confidence sequence. We formalize this observable variant in Corollary~\ref{cor:observable-CS:nonconvex}.

Our anytime-valid certificate will be expressed in terms of the following data-dependent upper boundary:
\begin{equation}
\label{eq:U-def:nonconvex}
    W_t(\alpha)
    :=
    \frac{1}{S_t}
    \Bigg(
        4\,\sqrt{\Gamma_{t,\mathrm{eff}}^2
        \Big(\log\tfrac{2}{\alpha}
        + \log\log(\mathrm e+\Gamma_{t,\mathrm{eff}}^2)\Big)}
        +
        f(x_1)
        + \frac{L}{2}\sum_{s=1}^t \eta_s^2\|g_s\|^2
    \Bigg),
\end{equation}
where $\Gamma_{t,\mathrm{eff}}^2$ is the effective cumulative variance proxy defined in~\eqref{eq:sigma-Sigma-def:nonconvex}. The boundary~\eqref{eq:U-def:nonconvex} combines three distinct contributions. The leading square-root term, involving $\Gamma_{t,\mathrm{eff}}^2$, captures the stochastic fluctuations induced by gradient noise and is obtained via a time-uniform concentration inequality for an adapted process satisfying conditional sub-Gaussian tail bounds. The remaining terms arise from the smoothness structure of the objective: the initial function value $f(x_1)$ and the accumulated squared updates $\sum_{s=1}^t \eta_s^2\|g_s\|^2$ appear when the standard smoothness descent inequality for SGD is telescoped along the trajectory. The next theorem establishes that the boundary $W_t(\alpha)$ defined in~\eqref{eq:U-def:nonconvex} forms an anytime-valid upper confidence sequence for the weighted average stationarity measure $\{\bar G_t\}_{t\ge1}$.

\begin{theorem}[Adaptive anytime-valid stationarity bound]
\label{thm:anytime-stationarity:nonconvex}
Suppose Assumptions~\ref{as:stepsizes}, \ref{as:gradients} and \ref{as:smoothness} hold. Then, for every $\alpha\in(0,\tfrac{2}{\mathrm{e}})$, the process $\{W_t(\alpha)\}_{t\ge1}$ defined in~\eqref{eq:U-def:nonconvex} satisfies
\[
    \mathbb{P}\!\left(
        \forall t\ge 1:\ \bar G_t \le W_t(\alpha)
    \right)
    \;\ge\; 1-\alpha.
\]
In particular, $\{W_t(\alpha)\}_{t\ge1}$ is an anytime-valid upper confidence sequence for $\{\bar G_t\}_{t\ge1}$ in the sense of Definition~\ref{def:cs}.
\end{theorem}

Theorem~\ref{thm:anytime-stationarity:nonconvex} shows that the boundary $W_t(\alpha)$ provides a statistically valid certificate for approximate first-order stationarity along the SGD trajectory. Beyond its validity, the certificate admits a natural interpretation: its evolution reflects the interaction between the stepsize sequence, the magnitude of stochastic gradient noise, and the realized optimization path. Depending on these factors, the bound may decrease, stabilize, or fluctuate, thereby tracking the finest level of stationarity that can be certified from the observed data at each iteration. In this sense, the confidence sequence quantifies not only algorithmic progress, but also the intrinsic statistical resolution imposed by noise and stepsize choices. In the next corollary, we specialize the construction to obtain a fully observable variant under an explicit bound on the gradient norms, yielding an online-implementable confidence sequence.

\begin{corollary}[Observable anytime-valid stationarity bound]
\label{cor:observable-CS:nonconvex}
Under the conditions of Theorem~\ref{thm:anytime-stationarity:nonconvex}, suppose in addition that there exists a constant $G\in(0,\infty)$ such that
\[
    \|\nabla f(x_t)\| \le G
    \qquad \text{almost surely } \forall t\ge 1.
\]
Define the observable variance proxy
\[
    \widetilde{\nu}_t^2 := \sigma^2 \eta_t^2 G^2,
    \qquad
    \widetilde{\Gamma}_t^2 := \sum_{s=1}^t \widetilde{\nu}_s^2
    = \sigma^2 G^2 V_t,
    \qquad
    \widetilde{\Gamma}_{t,\mathrm{eff}}^2
    :=
    \max\!\left\{
        \widetilde{\Gamma}_t^2,\;
        2\Big(\log \tfrac{2}{\alpha}+1\Big)
    \right\}.
\]
Then, for every $\alpha\in(0,\tfrac{2}{\mathrm{e}})$, the process
\[
    W_t^{\mathrm{obs}}(\alpha)
    :=
    \frac{1}{S_t}
    \Bigg(
        4\,\sqrt{
            \widetilde{\Gamma}_{t,\mathrm{eff}}^2
            \Big(
                \log\tfrac{2}{\alpha}
                + \log\log(\mathrm e + \widetilde{\Gamma}_{t,\mathrm{eff}}^2)
            \Big)}
            +
        f(x_1)
        + \frac{L}{2}\sum_{s=1}^t \eta_s^2\|g_s\|^2
    \Bigg)
\]
is an anytime-valid upper confidence sequence for the weighted average stationarity measure $\{\bar G_t\}_{t\ge 1}$.
\end{corollary}

In particular, provided an a priori bound $G$ is available, $W_t^{\mathrm{obs}}(\alpha)$ is $\mathcal{F}_t$-measurable and can be evaluated online at each iteration using only quantities observable along the SGD trajectory.

%--------------------------------------------------------------------------------

\subsection{Rates under common stepsize choices}
\label{subsec:behavior-CS}

We now analyze how the observable anytime-valid confidence sequences evolve under standard stepsize schedules for stochastic gradient descent, focusing on stochastic approximation stepsizes satisfying $\sum_{t=1}^\infty \eta_t=\infty$ and $\sum_{t=1}^\infty \eta_t^2<\infty$ (e.g., of the form $\eta_t = \eta_0 t^{-\gamma}$ with $\gamma\in(1/2,1]$), and on polynomial stepsizes of the form $\eta_t = \eta_0 t^{-1/2}$. For these regimes, we establish explicit decay rates for the confidence sequence, both in expectation and almost surely along the realized SGD trajectory. These results clarify how the choice of stepsize governs the asymptotic magnitude of the certificate, while certified stopping rules and stopping-time guarantees are developed separately in Section~\ref{sec:stopping}.

Throughout this subsection, we fix a confidence level $\alpha\in(0,2/\mathrm e)$ and focus on the dependence of the confidence sequences on the iteration index $t$. The constants implicit in the $O(\cdot)$ notation may depend on $\alpha$, but not on $t$; see Remark~\ref{rem:alpha-dependence} for a discussion of the explicit dependence on $\alpha$.

\begin{assumption}[Stochastic gradients II]
\label{as:gradients:moment}
For all $t\ge 1$, the stochastic gradient $g_t$ satisfies
\[
    \mathbb{E}\big[\|g_t\|^2 \mid \mathcal{F}_{t-1}\big]
    \le M^2
    \qquad \text{almost surely},
\]
for some constant $M\in(0,\infty)$.
\end{assumption}

Assumption~\ref{as:gradients:moment} is standard and is introduced here only to simplify the interpretation of the confidence sequences under different stepsize regimes; it is not required for the anytime-valid guarantees proved earlier. It is also consistent with Assumption~\ref{as:gradients}(ii): if $\xi_t = g_t-\nabla f(x_t)$ is conditionally sub-Gaussian with variance proxy $\sigma^2$, then a standard consequence of \eqref{eq:cond-subg-noise} is that
\[
    \mathbb{E}\!\left[\|g_t\|^2 \mid \mathcal{F}_{t-1}\right]
    = \|\nabla f(x_t)\|^2
    + \mathbb{E}\!\left[\|\xi_t\|^2 \mid \mathcal{F}_{t-1}\right]
    \le \|\nabla f(x_t)\|^2 + d\sigma^2
    \qquad \text{almost surely}.
\]
Thus, whenever the gradient norms are uniformly bounded along the trajectory, Assumption~\ref{as:gradients:moment} holds automatically. In particular, in the nonconvex case, where such boundedness is already imposed to obtain a fully observable confidence sequence, this assumption is without loss of generality.

Before turning to specific stepsize regimes, we note that the \emph{observable} confidence sequences from Corollaries~\ref{cor:observable-CS} and \ref{cor:observable-CS:nonconvex} have the same structural form and the same dependence on the stepsizes. In both the convex and nonconvex settings, the bound at time $t$ can be written as $1/S_t$ times a sum of three terms: a time-uniform term governed by the intrinsic variance scale $V_t=\sum_{s=1}^t \eta_s^2$, a cumulative term of the form $\sum_{s=1}^t \eta_s^2\|g_s\|^2$ arising directly from the SGD recursion, and an initial-condition term depending on $R_x^2$ or $f(x_1)$. At the level of the observable corollaries, the variance proxies entering the time-uniform terms are identical, since both rely on deterministic bounds on $\|x_t-x^\star\|^2$ in the convex case and on $\|\nabla f(x_t)\|^2$ in the nonconvex case. Consequently, the dependence of the confidence sequences on the stepsize schedule is the same in both settings. For this reason, and to avoid repetition, we state the results below only for the confidence sequence $U_t^{\mathrm{obs}}(\alpha)$; the corresponding statements for the nonconvex certificate $W_t^{\mathrm{obs}}(\alpha)$ follow verbatim, with identical stepsize dependence.

\bigskip
\noindent\textbf{Stochastic approximation stepsizes.}
The cleanest characterization of the behavior of the confidence sequence arises under classical \emph{stochastic approximation} stepsizes, in which $\eta_t>0$ and
\begin{equation}
\label{eq:stepsizes:SA}
    \sum_{t=1}^\infty \eta_t=\infty,
    \qquad
    \sum_{t=1}^\infty \eta_t^2<\infty.
\end{equation}
This is the canonical regime underlying the asymptotic theory of stochastic approximation and almost sure convergence results for SGD in convex optimization. In our context, these conditions imply that the normalization $S_t$ diverges, while the intrinsic variance scale in the confidence sequence remains controlled. The following proposition formalizes this intuition by showing that, under stochastic approximation stepsizes, the confidence sequence decays at order $1/S_t$, both almost surely and in expectation.

\begin{proposition}[Rate-controlled decay under stochastic approximation stepsizes]
\label{prop:Ut-SA}
Under the conditions of Corollary~\ref{cor:observable-CS}, suppose in addition that Assumption~\ref{as:gradients:moment} holds and that the stepsizes satisfy \eqref{eq:stepsizes:SA}. Fix any $\alpha\in(0,\tfrac{2}{\mathrm e})$. Then:
\begin{enumerate}
    \item[\textup{(i)}] The observable confidence sequence satisfies
    \[
        \mathbb{E}\!\left[U_t^{\mathrm{obs}}(\alpha)\right] = O\left(\frac{1}{S_t}\right).
    \]
    \item[\textup{(ii)}] Moreover,
    \[
        U_t^{\mathrm{obs}}(\alpha)=O\left(\frac{1}{S_t}\right)
        \quad \text{almost surely},
    \]
    and in particular $U_t^{\mathrm{obs}}(\alpha)\to 0$ almost surely as $t\to\infty$.
\end{enumerate}
\end{proposition}

An explicit constant in the bound $\mathbb{E}[U_t^{\mathrm{obs}}(\alpha)] = O(1/S_t)$ can be extracted from the proof. Taken together, Proposition~\ref{prop:Ut-SA} and Remark~\ref{rem:lower-1d} show that under stochastic approximation stepsizes, anytime-valid certification comes at essentially no cost to the rate: the certificate decays as $1/S_t$, and this time-uniform scaling is unimprovable in general.

\begin{remark}[Why no $\log\log \mathrm t$ term?]
A useful consequence of the square-summable stepsize condition is that the iterated-logarithm correction in $U_t^{\mathrm{obs}}(\alpha)$ is expressed in terms of the intrinsic variance scale $\Sigma_{t,\mathrm{eff}}^2$ rather than the time index $t$. In classical boundary-crossing problems, the predictable variance typically grows linearly with time, leading to $\log\log t$ corrections \cite{robbins1970boundary,howard2021time}. Here, however, $\sum_{t=1}^\infty \eta_t^2<\infty$ forces $\Sigma_{t,\mathrm{eff}}^2$ to remain uniformly bounded, so the variance-based iterated logarithm is itself bounded. This places stochastic approximation in a regime where variance-adaptive confidence sequences can be substantially tighter than bounds written purely as a function of time; see, for example, \cite{de2009self} and the discussion in \cite[Section~3]{howard2021time}. \hfill $\clubsuit$
\end{remark}

\bigskip
\noindent\textbf{Polynomial stepsizes $\eta_t=\eta_0\, t^{-1/2}$.}
Beyond stochastic approximation stepsizes, an additional regime of particular interest consists of polynomially decaying stepsizes of the form $\eta_t=\eta_0\, t^{-1/2}$. This schedule can be viewed as the horizon-free analogue of the classical finite-horizon tuning $\eta_t=\eta_0\,T^{-1/2}$, which is held constant over $t=1,\ldots,T$ when a terminal time $T$ is specified in advance. We therefore use $\eta_t=\eta_0\,t^{-1/2}$ as a natural benchmark for comparing our anytime-valid guarantees against the familiar fixed-horizon rates in stochastic optimization.

In contrast to stochastic approximation stepsizes, for $\eta_t=\eta_0\, t^{-1/2}$ the variance scale $V_t=\sum_{s=1}^t \eta_s^2$ grows logarithmically. As a consequence, although the observable confidence sequence still decays along the realized trajectory, its pathwise behavior incurs an additional logarithmic factor through the variance scale $V_t$. We now formalize this behavior by characterizing the decay of the confidence sequence both in expectation and almost surely.

\begin{proposition}[Rate-controlled decay under polynomial stepsizes]
\label{prop:Ut-poly}
Under the conditions of Corollary~\ref{cor:observable-CS}, suppose in addition that Assumption~\ref{as:gradients:moment} holds and that the stepsizes satisfy $\eta_t=\eta_0\,t^{-1/2}$. Fix any $\alpha\in(0,\tfrac{2}{\mathrm e})$. Then:
\begin{enumerate}
    \item[\textup{(i)}] The observable confidence sequence satisfies
    \[
        \mathbb{E}\!\left[U_t^{\mathrm{obs}}(\alpha)\right]
        = O\left(\frac{\log t}{\sqrt{t}}\right).
    \]
    \item[\textup{(ii)}] If, in addition, the gradient norms are uniformly bounded,
    $\|\nabla f(x_t)\|\le G$ almost surely for all $t\ge1$, then
    \[
        U_t^{\mathrm{obs}}(\alpha)
        = O\left(\frac{\log t}{\sqrt{t}}\right)
        \quad \text{almost surely},
    \]
    and in particular $U_t^{\mathrm{obs}}(\alpha)\to 0$ almost surely as $t\to\infty$.
\end{enumerate}
\end{proposition}

An explicit constant in the bound $\mathbb{E}[U_t^{\mathrm{obs}}(\alpha)] = O(\log t / \sqrt{t})$ can be extracted from the proof. The uniform boundedness assumption on the gradients in part~\textup{(ii)} is mild in the present setting. In the nonconvex case, such a bound is already imposed in order to obtain a fully observable confidence sequence. In the convex case, when the iterates are projected onto a compact set $\mathcal X$, this condition is automatically satisfied whenever $f$ is Lipschitz on $\mathcal X$.

\begin{remark}[On the $\log t$ term]
\label{rem:logt}
The logarithmic factor in Proposition~\ref{prop:Ut-poly} comes from using a single, horizon-independent stepsize schedule. In classical \emph{finite-horizon} analyses, one fixes a terminal time $T$ and tunes the stepsize as $\eta_t=\eta_0\,T^{-1/2}$ (constant in $t$), which gives $\sum_{s=1}^T \eta_s^2=\eta_0^2$ and thus avoids logarithmic growth. In contrast, an anytime-valid guarantee must rely on a single stepsize sequence that is defined and usable for all times, such as $\eta_t=\eta_0\,t^{-1/2}$, for which $\sum_{s=1}^t \eta_s^2 \asymp \log t$. Hence the $\log t$ term should be viewed as the cost of horizon-free operation under the square-root schedule, rather than as a cost intrinsic to the confidence-sequence methodology. \hfill $\clubsuit$
\end{remark}

\begin{remark}[Dependence on the confidence level $\alpha$]
\label{rem:alpha-dependence}
We briefly comment on the dependence on the confidence level $\alpha$, which is suppressed in the rate statements above. For simplicity, suppose that for the timescales of interest the effective variance proxy satisfies $\widetilde{\Sigma}_{t,\mathrm{eff}}^2=\sigma^2R_x^2V_t$; that is, the maximum in the definition of $\widetilde{\Sigma}_{t,\mathrm{eff}}^2$ is attained by the intrinsic variance scale rather than by the lower truncation term $2(\log ({2}/{\alpha})+1)$. In this typical regime, the dependence on $\alpha$ enters only through the boundary term $\log(2/\alpha)$. Under stochastic approximation stepsizes, $V_t$ is bounded, and the proof yields
\[
    \mathbb{E}\!\left[U_t^{\mathrm{obs}}(\alpha)\right]
    =
    O\left(\frac{\sqrt{\log(2/\alpha)}}{S_t}\right),
\]
up to constants independent of $t$. For polynomial stepsizes $\eta_t=\eta_0\, t^{-1/2}$, we have $V_t\asymp \log t$, leading to
\[
    \mathbb{E}\!\left[U_t^{\mathrm{obs}}(\alpha)\right]
    =
    O\left(
    \frac{\sqrt{\log t\,\big(\log(2/\alpha)+\log\log t\big)}}{\sqrt t}
    +
    \frac{\log t}{\sqrt t}
    \right),
\]
again up to constants independent of $t$. For any fixed $\alpha\in(0,2/\mathrm e)$, these expressions recover the $t$-rates stated in Propositions~\ref{prop:Ut-SA} and~\ref{prop:Ut-poly}.  \hfill $\clubsuit$
\end{remark}

%--------------------------------------------------------------------------------
%--------------------------------------------------------------------------------
%--------------------------------------------------------------------------------

\section{Anytime-Valid Stopping Rules}
\label{sec:stopping}

A central motivation for our confidence-sequence approach is to turn SGD into an \emph{anytime-valid} optimization procedure: instead of committing to a fixed horizon in advance, the user would like to stop the algorithm based on the information actually observed along the trajectory. In practice, most stopping rules for SGD are heuristic (e.g., they monitor validation loss, gradient norms, or empirical convergence patterns) and existing guarantees typically hold only at a pre-specified time or for a fixed, non-adaptive stopping rule. In contrast, a confidence sequence is designed precisely to support \emph{optional stopping}: it provides a high-probability guarantee that remains valid under arbitrary data-dependent monitoring and stopping strategies.

As in Section~\ref{subsec:behavior-CS}, we present the stopping rules and their analysis only for the convex confidence sequence from Corollary~\ref{cor:observable-CS}, to avoid repetition. At the level of the observable confidence sequences, the convex and nonconvex bounds have identical structure and identical dependence on the stepsize schedule, and differ only in the interpretation of the certified performance measure. In particular, all of the stopping rules, finiteness arguments, and stopping-time complexity bounds stated below carry over verbatim to the nonconvex confidence sequence from Corollary~\ref{cor:observable-CS:nonconvex}, with $\varepsilon$-optimality replaced by $\varepsilon$-stationarity; i.e., with $f(\bar x_t)-f(x^\star)$ replaced by $\min_{1 \le s \le t} \|\nabla f(x_s)\|^2$.

Corollary~\ref{cor:observable-CS} provides an observable anytime-valid upper confidence sequence $\{U_t^{\mathrm{obs}}(\alpha)\}_{t\ge 1}$ for the weighted average suboptimality $\bar F_t$ in \eqref{eq:weighted:subopt}. Combining this with the convexity bound $f(\bar x_t)-f(x^\star) \le \bar F_t$, we obtain, for any fixed $\alpha\in(0,\tfrac{2}{\mathrm e})$,
\begin{equation}
\label{eq:cs-stopping-recall}
    \mathbb{P}\!\left(
        \forall t\ge 1:\ 
        f(\bar x_t)-f(x^\star) \le U_t^{\mathrm{obs}}(\alpha)
    \right)
    \;\ge\; 1-\alpha,
\end{equation}
where $\bar x_t = S_t^{-1}\sum_{s=1}^t \eta_s x_s$ is the weighted average iterate, with $S_t=\sum_{s=1}^t \eta_s$.

Given a target accuracy $\varepsilon>0$ and confidence level $\alpha\in(0,\tfrac{2}{\mathrm e})$, the time-uniform guarantee \eqref{eq:cs-stopping-recall} suggests a simple data-dependent stopping rule: monitor the observable certificate $U_t^{\mathrm{obs}}(\alpha)$ along the run of SGD and stop as soon as it falls below $\varepsilon$.  We formalize this through the stopping time
\begin{equation}
\label{eq:tau-def-stopping}
    \tau_\varepsilon
    :=
    \inf\bigl\{ t\ge 1 : U_t^{\mathrm{obs}}(\alpha) \le \varepsilon \bigr\},
\end{equation}
with the convention $\inf\varnothing=\infty$.  By construction, $\tau_\varepsilon$ depends only on quantities observed along the trajectory and is therefore a stopping time with respect to the natural filtration $\{\mathcal{F}_t\}_{t\ge 1}$.

The following theorem shows that this intuitive rule is in fact statistically sound: whenever the algorithm stops, the returned weighted average iterate satisfies the desired accuracy guarantee with confidence $1-\alpha$, without requiring the stopping time to be fixed in advance.

\begin{theorem}[Certified anytime-valid stopping rule]
\label{thm:stopping-certified-main}
Suppose Assumptions~\ref{as:stepsizes} and \ref{as:gradients} hold, and let $\{U_t^{\mathrm{obs}}(\alpha)\}_{t\ge 1}$ be the observable anytime-valid upper confidence sequence from Corollary~\ref{cor:observable-CS}. Fix $\alpha\in(0,\tfrac{2}{\mathrm e})$, and define $\tau_\varepsilon$ as in
\eqref{eq:tau-def-stopping}. Then
\begin{equation}
\label{eq:stopping-certified-direct}
\mathbb{P}\!\left(
    \{\tau_\varepsilon<\infty\}\cap\{f(\bar x_{\tau_\varepsilon})-f(x^\star)>\varepsilon\}
\right)
\le \alpha .
\end{equation}
Consequently, with probability at least $1-\alpha$, whenever $\tau_\varepsilon<\infty$ we have $f(\bar x_{\tau_\varepsilon})-f(x^\star) \le \varepsilon$.
\end{theorem}

Theorem~\ref{thm:stopping-certified-main} establishes the essential statistical point: the confidence sequence $U_t^{\mathrm{obs}}(\alpha)$ may be monitored continuously and stopped on without invalidating the confidence guarantee. Thus, whenever the stopping rule triggers, the returned iterate is $\varepsilon$-optimal with confidence $1-\alpha$. We now turn from validity to termination, and ask under what conditions the certificate crosses the threshold $\varepsilon$ in finite time. We address this question for the two canonical stepsize schedules introduced in Section~\ref{subsec:behavior-CS}. Under stochastic approximation stepsizes, the confidence sequence decays at rate $1/S_t$ almost surely, which implies that the stopping time $\tau_\varepsilon$ is almost surely finite under the same conditional second moment assumptions used to control its trajectory. For polynomial stepsizes of the form $\eta_t=\eta_0\,t^{-1/2}$, the intrinsic variance scale grows logarithmically and the cumulative term $\sum_{s=1}^t \eta_s^2\|g_s\|^2$ must be controlled; in this case, almost sure termination is recovered under a mild additional uniform boundedness assumption along the trajectory, as in Proposition~\ref{prop:Ut-poly}.

\begin{proposition}[Certified stopping under common stepsize regimes]
\label{prop:stopping-certified}
Suppose Assumptions~\ref{as:stepsizes}, \ref{as:gradients}, and \ref{as:gradients:moment} hold, and let $\{U_t^{\mathrm{obs}}(\alpha)\}_{t\ge 1}$ be the observable anytime-valid confidence sequence from Corollary~\ref{cor:observable-CS}. Fix $\varepsilon>0$ and $\alpha\in(0,\tfrac{2}{\mathrm e})$, and define $\tau_\varepsilon$ as in~\eqref{eq:tau-def-stopping}. Then:
\begin{enumerate}
    \item[\textup{(i)}] Under the stochastic approximation stepsize conditions \eqref{eq:stepsizes:SA}, the stopping time $\tau_\varepsilon$ is almost surely finite.

    \item[\textup{(ii)}] If $\eta_t=\eta_0\,t^{-1/2}$ and, in addition, the gradient norms are uniformly bounded, $\|\nabla f(x_t)\|\le G$ almost surely for all $t\ge1$, then the stopping time $\tau_\varepsilon$ is again almost surely finite.
\end{enumerate}
Therefore, in both cases, the stopped iterate satisfies the certification guarantee
\begin{equation}\label{eq:stopped-eps-opt}
    \mathbb{P}\Big(
        f(\bar x_{\tau_\varepsilon}) - f(x^\star) \le \varepsilon
    \Big)
    \;\ge\; 1-\alpha.
\end{equation}
\end{proposition}

Proposition~\ref{prop:stopping-certified} establishes that the stopping rule $\tau_\varepsilon$ is both statistically valid and almost surely finite under the standard stepsize regimes considered in this paper. Equipped with this rule, projected SGD becomes a fully implementable, data-dependent $\varepsilon$-optimization procedure: one may monitor $U_t^{\mathrm{obs}}(\alpha)$ online and stop as soon as it falls below $\varepsilon$, without specifying a horizon in advance or invalidating the $1-\alpha$ guarantee.

%--------------------------------------------------------------------------------

\subsection{Stopping-time complexity}
\label{subsec:stopping-time-complexity}

Having established validity and almost sure termination, we now quantify the induced \emph{stopping-time complexity} via the expected stopping time $\mathbb E[\tau_\varepsilon]$, which captures the typical runtime of the certified, continuously monitored stopping rule and enables direct comparison with classical expected-iteration guarantees for stochastic gradient methods.

As above, we state results only for the convex certificate $U_t^{\mathrm{obs}}(\alpha)$ and the suboptimality measure $f(\bar x_t)-f(x^\star)$; the nonconvex analog follows verbatim upon replacing $U_t^{\mathrm{obs}}(\alpha)$ and $f(\bar x_t)-f(x^\star)$ by $W_t^{\mathrm{obs}}(\alpha)$ and $\min_{1\le s\le t}\|\nabla f(x_s)\|^2$, respectively. By Theorem~\ref{thm:stopping-certified-main}, on the event $\{\tau_\varepsilon<\infty\}$ the stopping rule returns the weighted-average iterate $\bar x_{\tau_\varepsilon}$ together with an $\varepsilon$-optimality guarantee at confidence level at least $1-\alpha$, without requiring the stopping time to be fixed in advance. We therefore bound $\mathbb{E}[\tau_\varepsilon]$ under the two canonical stepsize regimes studied earlier: stochastic approximation (SA) stepsizes and the square-root schedule $\eta_t=\eta_0\, t^{-1/2}$. Some results below additionally require bounded gradients along the realized trajectory, which already appeared in the nonconvex setting to obtain an observable certificate and for the certified stopping-time analysis under the schedule $\eta_t=\eta_0\,t^{-1/2}$.

\begin{assumption}[Bounded gradients along the trajectory]
\label{as:bounded-grad}
There exists a constant $G\in(0,\infty)$ such that
\[
    \|\nabla f(x_t)\|\le G
    \qquad\text{almost surely for all } t\ge1.
\]
\end{assumption}

We now record a simple reduction that turns an almost sure envelope for the certificate into a bound on the expected certified stopping time. The argument is entirely pathwise: once an almost sure envelope holds along the trajectory, the stopping time is deterministically bounded by the inverse envelope.

\begin{lemma}[From envelope to expected stopping-time bound]
\label{lem:Etau-from-envelope}
Fix $\alpha\in(0,\tfrac{2}{\mathrm e})$ and $\varepsilon>0$. Suppose there exist a random variable $K\ge0$ and a deterministic function $b:\mathbb{N}\to(0,\infty)$ such that:
\begin{enumerate}
\item[\textup{(i)}] $b(t)$ is nonincreasing in $t$, and $\lim_{t\to\infty}b(t)=0$;
\item[\textup{(ii)}] for all $t\ge1$, $U_t^{\mathrm{obs}}(\alpha)\le K\,b(t)$ almost surely.
\end{enumerate}
Define the deterministic inverse profile $T(u):=\inf\{t\ge1:\, b(t)\le u\}$, for $u\ge 0$, with the convention $\inf\emptyset=\infty$. Then $\tau_\varepsilon \le T(\varepsilon/K)$ almost surely, and hence
\[
    \mathbb{E}[\tau_\varepsilon]\le \mathbb{E}\!\left[T\!\left(\frac{\varepsilon}{K}\right)\right].
\]
\end{lemma}

Lemma~\ref{lem:Etau-from-envelope} reduces expected certified runtime to two ingredients: a \emph{deterministic} decay profile $b(t)$ and a \emph{random} prefactor $K$ controlling the pathwise fluctuations of the observable certificate. The inverse map $T(\cdot)$ is typically nonlinear, so converting the almost sure domination $\tau_\varepsilon \le T(\varepsilon/K)$ into an explicit bound on $\mathbb{E}[\tau_\varepsilon]$ requires integrability of the random variable $T(\varepsilon/K)$, and hence moment (or tail) control on $K$. In particular, when $b(t)$ decays polynomially, $T(\varepsilon/K)$ grows polynomially in $K$, and the required moment order is dictated by the stepsize regime.

\bigskip
\noindent\textbf{Polynomial stepsizes $\eta_t=\eta_0\, t^{-1/2}$.}
We first specialize to the square-root schedule $\eta_t=\eta_0\,t^{-1/2}$, which admits a fully explicit expected stopping-time bound under the baseline assumptions. In this regime the deterministic components of $U_t^{\mathrm{obs}}(\alpha)$ scale as $(1+\log t)/\sqrt t$, and the remaining randomness can be captured by a single weighted noise-square martingale. Lemma~\ref{lem:Mt-maximal} provides the moment control needed to combine Lemma~\ref{lem:Etau-from-envelope} with an explicit inversion of the profile $b(t)\asymp (1+\log t)/\sqrt t$, yielding an explicit stopping-time complexity rate for $\mathbb E[\tau_\varepsilon]$.

\begin{theorem}[Stopping-time complexity for $\eta_t=\eta_0\,t^{-1/2}$]
\label{thm:Etau-sqrt}
Suppose Assumptions~\ref{as:stepsizes}, \ref{as:gradients}, and \ref{as:bounded-grad} hold, and let $\eta_t=\eta_0\,t^{-1/2}$. Fix any $\alpha\in(0,\tfrac{2}{\mathrm e})$. Then, for all $\varepsilon\in(0,\tfrac12)$,
\[
    \mathbb{E}[\tau_\varepsilon]
    =
    O\left(\varepsilon^{-2}\log^2\!\Big(\frac{1}{\varepsilon}\Big)\right).
\]
\end{theorem}

In Theorem~\ref{thm:Etau-sqrt}, the $O(\cdot)$ notation hides constants that depend on the confidence level $\alpha$ and the quantities $(\eta_0,R_x,G,\sigma,d)$, but are independent of the tolerance $\varepsilon$. Moreover, the dependence on $\alpha$ enters only through the time-uniform confidence boundary (via $\sqrt{\log(2/\alpha)}$ and the lower truncation in the effective variance proxy), and can be made explicit by tracing these factors through the proof; for any fixed $\alpha\in(0,2/\mathrm e)$ the stated $\varepsilon$-rate is unchanged.

\begin{remark}[On the $\log^2({1}/{\varepsilon})$ term]
\label{rem:log2eps}
The rate in Theorem~\ref{thm:Etau-sqrt} matches the natural inversion of the certificate scale under the square-root schedule $\eta_t=\eta_0\,t^{-1/2}$. In this regime, $V_t=\sum_{s=1}^t \eta_s^2 \asymp \log t$, and the observable certificate admits an almost sure envelope of order $(\log t)/\sqrt t$. Thus, the stopping condition $U_t^{\mathrm{obs}}(\alpha)\le \varepsilon$ heuristically suggests $\tau_\varepsilon$ of order $\varepsilon^{-2}\log^2(1/\varepsilon)$. Theorem~\ref{thm:Etau-sqrt} makes this intuition precise by establishing the corresponding bound on the expected stopping time. Importantly, the logarithmic factors come from the horizon-free square-root stepsize schedule (through $V_t\asymp \log t$ and the inversion of $(\log t)/\sqrt t$), not from the anytime-valid nature of the certificate, whose role is precisely to keep the guarantee robust under continuous monitoring and data-dependent stopping.
\hfill $\clubsuit$
\end{remark}

Before proceeding, we note that under the stronger assumption of uniformly bounded stochastic gradients, $\|g_t\|\le G$ almost surely, the square-root regime admits an envelope with a deterministic prefactor. In that case, one also obtains a pathwise certified stopping-time bound, yielding $\tau_\varepsilon = O\big(\varepsilon^{-2}\log^2(1/\varepsilon)\big)$ almost surely, with explicit constant depending on $(\eta_0,R_x,G,\sigma,d,\alpha)$ but not on $\varepsilon$.

\bigskip
\noindent\textbf{Stochastic approximation stepsizes.}
We now turn to stochastic approximation (SA) stepsizes. Proposition~\ref{prop:Ut-SA} yields a pathwise envelope of the same general form, namely $U_t^{\mathrm{obs}}(\alpha)\le K/S_t$ almost surely for a finite nonnegative random variable $K$. For polynomial SA stepsizes $\eta_t=\eta_0\,t^{-\gamma}$ with $\gamma\in(\tfrac12,1)$, the cumulative weight satisfies $S_t\asymp t^{1-\gamma}$, and hence Lemma~\ref{lem:Etau-from-envelope} implies $\tau_\varepsilon \lesssim (K/\varepsilon)^{1/(1-\gamma)}$ almost surely. Consequently, obtaining an explicit rate for $\mathbb{E}[\tau_\varepsilon]$ reduces to integrability of $K^{p_\gamma}$ with exponent $p_\gamma:=1/(1-\gamma)$ (e.g., $\gamma=3/4$ requires a fourth moment, while $\gamma=0.9$ requires a tenth moment). This dependence is an intrinsic consequence of inverting the polynomial profile $b(t)=1/S_t$, rather than of optional stopping.

Under the conditional sub-Gaussian noise assumption, one may control such moments via the same martingale decomposition strategy that underlies the proof of Theorem~\ref{thm:Etau-sqrt}: the cumulative noise-square term is decomposed into a predictable compensator plus a martingale $M_t$, and martingale maximal/moment inequalities are then used to bound $\sup_{t\ge1}|M_t|$ in $L^{p_\gamma}$. In particular, for each fixed $p\ge2$ one can obtain $\mathbb{E}[K^{p}]<\infty$ by extending the argument of Lemma~\ref{lem:Mt-maximal} to $L^p$. The key point, however, is that these bounds are highly non-uniform in $p$. First, high-order control requires correspondingly high moments of $\|\xi_t\|$; under conditional sub-Gaussianity these moments are finite for every fixed order, but their deterministic upper bounds grow with the order. Second, the constants in martingale maximal and moment inequalities (e.g., Burkholder's square-function inequality \cite[Theorem~2.10]{hall2014martingale} combined with Doob-type maximal inequalities) also increase rapidly with the exponent; in particular, tracking explicit constants typically yields growth on the order of $p^{\,p}$. As $\gamma\uparrow1$, the required moment order $p_\gamma=1/(1-\gamma)$ diverges, and the implicit constants in the resulting bound for $\mathbb{E}[\tau_\varepsilon]$ therefore deteriorate extremely fast. Thus, while for each fixed $\gamma\in(\tfrac12,1)$ one may in principle recover the natural scaling $\mathbb{E}[\tau_\varepsilon]=O\big(\varepsilon^{-1/(1-\gamma)}\big)$, the associated constants become quantitatively fragile.

This behavior contrasts with the square-root schedule analyzed in Theorem~\ref{thm:Etau-sqrt}. There, the inversion of the profile $b(t)\asymp (1+\log t)/\sqrt t$ yields an upper bound of the schematic form $\mathbb{E}[\tau_\varepsilon]\lesssim \varepsilon^{-2} \,\mathbb{E}\big[K^2\log^2(K/\varepsilon)\big]$ (see the proof for the precise statement), so it suffices to control only a moment slightly above~$2$. Indeed, for any $\delta>0$, the condition $\mathbb{E}[K^{2+\delta}]<\infty$ implies $\mathbb{E}\big[K^2\log^2(K)\big]<\infty$, and thus yields a finite bound on $\mathbb{E}[\tau_\varepsilon]$ with constants that remain stable. 

Therefore, the baseline conditional sub-Gaussian model is sufficient to ensure almost sure termination of the certified stopping rule, but in the SA regime it can lead to expected stopping-time bounds whose constants drastically deteriorate as the stepsizes move from $\eta_t=\eta_0\,t^{-1/2}$ toward $\eta_t=\eta_0\,t^{-1}$. To eliminate this issue, one can impose stronger assumptions on the stochastic gradients: if they are uniformly bounded almost surely, then the envelope in Lemma~\ref{lem:Etau-from-envelope} is deterministic. As a result, one obtains \emph{pathwise} certified stopping-time bounds under standard stochastic approximation stepsizes.

\begin{remark}[SA stepsizes and bounded stochastic gradients]
\label{rem:SA-bounded}
Assume $\|g_t\|\le G$ almost surely for all $t\ge1$ and $\sum_{t=1}^\infty \eta_t^2<\infty$. Then there exists a deterministic constant $K_{\mathrm{det}}<\infty$ (depending only on $(G,R_x,\alpha)$ and the stepsizes through $V_\infty:=\sum_{t=1}^\infty \eta_t^2$) such that, almost surely,
\[
    U_t^{\mathrm{obs}}(\alpha)\le \frac{K_{\mathrm{det}}}{S_t}
    \qquad\text{for all } t\ge1.
\]
Consequently, almost surely,
\[
    \tau_\varepsilon
    \le
    \inf\Bigl\{t\ge1:\ S_t \ge \frac{K_{\mathrm{det}}}{\varepsilon}\Bigr\}.
\]
If $\eta_t=\eta_0\,t^{-\gamma}$ with $\gamma\in(1/2,1)$, then Lemma~\ref{lem:poly-sum} yields $S_t \ge \eta_0 (t^{1-\gamma}-1)/(1-\gamma)$ for all $t\ge1$, and hence
\[
    \tau_\varepsilon
    \le
    \left\lceil
    \left(1+\frac{(1-\gamma)K_{\mathrm{det}}}{\eta_0\,\varepsilon}\right)^{\!\frac{1}{1-\gamma}}
    \right\rceil .
\]
In the boundary case $\eta_t=\eta_0\,t^{-1}$, Lemma~\ref{lem:harmonic-sum} gives $S_t \ge \eta_0 \log t$, and therefore
\[
    \tau_\varepsilon
    \le
    \left\lceil \exp\!\left(\frac{K_{\mathrm{det}}}{\eta_0\,\varepsilon}\right)\right\rceil .
\]
\hfill $\clubsuit$
\end{remark}

%--------------------------------------------------------------------------------
%--------------------------------------------------------------------------------
%--------------------------------------------------------------------------------

\section{Proofs of Theorems~\ref{thm:anytime-suboptimality} and~\ref{thm:anytime-stationarity:nonconvex}}
\label{sec:proofs}

In this section we present the main proofs underlying the anytime-valid confidence sequences for stochastic gradient descent established in Theorems~\ref{thm:anytime-suboptimality} and~\ref{thm:anytime-stationarity:nonconvex}. The arguments exploit the structure of the SGD recursion to develop a time-uniform,
anytime-valid statistical view of optimization progress, grounded in sequential inference and nonnegative supermartingale techniques. The convex case is developed in a detailed, step-by-step manner, with intermediate lemmas and explanatory remarks highlighting the main ideas. The nonconvex proof builds on the same probabilistic framework and is therefore presented more directly, focusing on the smoothness-based decomposition and martingale structure specific to stationarity.

\subsection{Proof of Theorem~\ref{thm:anytime-suboptimality}}
\label{subsec:proof:thm:anytime-suboptimality}

We prove Theorem~\ref{thm:anytime-suboptimality} by decomposing the projected SGD recursion into a sequence of conceptual steps. We first construct a one-step process that compares the instantaneous suboptimality to the geometry of projected SGD and the stochastic gradient noise, and show that this process satisfies a trajectory-dependent sub-Gaussian bound. We then derive a time-uniform concentration bound for the resulting martingale sum, after which a simple telescoping argument and normalization by $S_t$ yield an anytime-valid upper confidence sequence for the weighted suboptimality $\bar F_t$.

\bigskip
\noindent
\textbf{Step 1: Constructing a sub-Gaussian process}

We begin by identifying a natural potential function that tracks the progress of projected SGD toward the minimizer. A standard choice in convex optimization is the squared distance to the optimum, $Z_t = \|x_t - x^\star\|^2$. Because the update $x_{t+1} = \Pi_{\mathcal{X}}(x_t - \eta_t g_t)$ is a Euclidean projection onto a closed convex set, the evolution of $Z_t$ can be controlled using the non-expansiveness of projection:
\begin{equation}
\label{eq:proj-nonexpansion}
    Z_{t+1}
    \;=\;
    \|x_{t+1} - x^\star\|^2
    \;\le\;
    \|x_t - \eta_t g_t - x^\star\|^2
    \;=\;
    Z_t - 2\eta_t\inner{x_t - x^\star}{g_t} + \eta_t^2\|g_t\|^2.
\end{equation}
Rearranging this gives the pathwise lower bound $Z_t - Z_{t+1}\ge 2\eta_t\inner{x_t - x^\star}{g_t} - \eta_t^2\|g_t\|^2$. To connect this distance recursion to the suboptimality we care about, we package the suboptimality, the distance decrement, and the squared gradient step into a single quantity
\begin{equation}
\label{eq:Xt-def-proj}
    X_t
    \;:=\;
    2\eta_t\big(f(x_t)-f(x^\star)\big)
    - (Z_t - Z_{t+1})
    - \eta_t^2\|g_t\|^2,
\end{equation}
with $X_0 \coloneqq 0$. The next lemma shows that $X_t$ has two crucial properties: it has nonpositive conditional drift, and its conditional moment generating function is dominated by that of a sub-Gaussian variable with variance proxy on the order of $\sigma_t^2 = \sigma^2 \eta_t^2 Z_t$, as defined in~\eqref{eq:sigma-Sigma-def}. This is where convexity of $f$, the unbiasedness assumption, and the conditional sub-Gaussian noise assumption enter.

\begin{lemma}[Conditional sub-Gaussian control]
\label{lem:mgf-proj-subg}
Let $X_t$ be defined as in~\eqref{eq:Xt-def-proj}. Under Assumption~\ref{as:gradients}, the following hold for all $t\ge 1$:
\begin{itemize}
    \item[(i)] \emph{Drift negativity:}
    \begin{equation}
    \label{eq:drift-negative-proj}
        \mathbb{E}[X_t \mid \mathcal{F}_{t-1}] \;\le\; 0.
    \end{equation}
    \item[(ii)] \emph{Conditional sub-Gaussian bound:} for all $\lambda \ge 0$,
    \begin{equation}
    \label{eq:subg-proj}
        \mathbb{E}\!\left[
            \exp\big(\lambda X_t\big)
            \,\Big|\,
            \mathcal{F}_{t-1}
        \right]
        \;\le\;
        \exp\!\big(2\lambda^2 \sigma_t^2\big).
    \end{equation}
\end{itemize}
\end{lemma}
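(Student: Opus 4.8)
The plan is to reduce both claims to a single pathwise upper bound on $X_t$, obtained by feeding the projection inequality~\eqref{eq:proj-nonexpansion} into the definition~\eqref{eq:Xt-def-proj} and then invoking convexity, after which the residual is recognized as a centered, conditionally sub-Gaussian term. First I would rearrange~\eqref{eq:proj-nonexpansion} into $Z_t - Z_{t+1}\ge 2\eta_t\inner{x_t - x^\star}{g_t} - \eta_t^2\|g_t\|^2$ and substitute it into $X_t$. The two copies of $\eta_t^2\|g_t\|^2$ cancel, leaving the pathwise estimate
\[
    X_t \;\le\; 2\eta_t\big[(f(x_t)-f(x^\star)) - \inner{x_t - x^\star}{g_t}\big].
\]

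Next I would decompose $g_t = \nabla f(x_t) + \xi_t$ and split the bracket into a predictable ``drift'' piece and a ``noise'' piece. Convexity of $f$ gives $f(x_t) - f(x^\star)\le \inner{\nabla f(x_t)}{x_t - x^\star}$, so the drift piece
\[
    D_t := 2\eta_t\big[(f(x_t)-f(x^\star)) - \inner{\nabla f(x_t)}{x_t - x^\star}\big]
\]
is nonpositive, and it is $\mathcal{F}_{t-1}$-measurable because the update makes $x_t$, and hence $Z_t$ and $\nabla f(x_t)$, determined by $\mathcal{F}_{t-1}$. The noise piece is $N_t := -2\eta_t\inner{x_t - x^\star}{\xi_t}$, giving the pathwise bound $X_t\le D_t + N_t$ with $D_t\le 0$ predictable.

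For part (i), I would take $\mathbb{E}[\,\cdot\mid\mathcal{F}_{t-1}]$ of this inequality: unbiasedness (Assumption~\ref{as:gradients}(i)) yields $\mathbb{E}[\xi_t\mid\mathcal{F}_{t-1}] = 0$, hence $\mathbb{E}[N_t\mid\mathcal{F}_{t-1}]=0$, and combined with $D_t\le 0$ this gives $\mathbb{E}[X_t\mid\mathcal{F}_{t-1}]\le 0$. For part (ii), since $\lambda\ge 0$ and $x\mapsto e^{\lambda x}$ is increasing, the pathwise bound gives $\exp(\lambda X_t)\le \exp(\lambda D_t)\exp(\lambda N_t)$; pulling out the predictable factor $\exp(\lambda D_t)\le 1$ leaves
\[
    \mathbb{E}\!\left[\exp(\lambda X_t)\mid\mathcal{F}_{t-1}\right]
    \;\le\;
    \mathbb{E}\!\left[\exp(\lambda N_t)\mid\mathcal{F}_{t-1}\right].
\]
I would then apply the conditional sub-Gaussian assumption~\eqref{eq:cond-subg-noise} with the choice $u = x_t - x^\star$ and scale $-2\lambda\eta_t$, which produces exactly $\exp\!\big(2\lambda^2\sigma^2\eta_t^2 Z_t\big) = \exp(2\lambda^2\sigma_t^2)$ by the definition $\sigma_t^2 = \sigma^2\eta_t^2 Z_t$.

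The one point requiring care is that in~\eqref{eq:cond-subg-noise} the vector $u$ is deterministic, whereas here $u = x_t - x^\star$ and the scalar $-2\lambda\eta_t$ are $\mathcal{F}_{t-1}$-measurable random quantities. This is resolved by the standard ``freezing'' argument: conditionally on $\mathcal{F}_{t-1}$ these quantities act as constants, so the fixed-$u$ bound applies with $u$ and the multiplier frozen to their realized values. I expect this measurability step, together with tracking the direction of the inequality (which is precisely why the sub-Gaussian bound is asserted only for $\lambda\ge 0$, so that monotonicity of $\exp(\lambda\,\cdot)$ and $\exp(\lambda D_t)\le 1$ both point the right way), to be the only genuinely delicate part; everything else is direct algebra from the projection inequality and convexity.
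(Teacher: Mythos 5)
Your proposal is correct and follows essentially the same route as the paper's proof: the projection inequality plus cancellation of the $\eta_t^2\|g_t\|^2$ terms, the split into a nonpositive convexity term and the noise term $-2\eta_t\inner{x_t-x^\star}{\xi_t}$, unbiasedness for part (i), and Assumption~\ref{as:gradients}(iii) with $u$ proportional to $x_t - x^\star$ (scale $-2\lambda\eta_t$, giving $\|u\|^2 = 4\eta_t^2 Z_t$ and hence $\exp(2\lambda^2\sigma_t^2)$) for part (ii). Your explicit handling of the ``freezing'' step for the $\mathcal{F}_{t-1}$-measurable $u$ and multiplier is a point the paper glosses over, and your factorization $\exp(\lambda X_t)\le \exp(\lambda D_t)\exp(\lambda N_t)$ with $\exp(\lambda D_t)\le 1$ is a trivially equivalent variant of the paper's direct pathwise bound $X_t\le -2\eta_t\inner{x_t-x^\star}{\xi_t}$.
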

\begin{proof}
We first derive a convenient upper bound on $X_t$. From~\eqref{eq:proj-nonexpansion} and the definition~\eqref{eq:Xt-def-proj},
\begin{align}
    X_t
    &= 2\eta_t\big(f(x_t)-f(x^\star)\big)
       - (Z_t - Z_{t+1})
       - \eta_t^2\|g_t\|^2 \notag\\
    &\le
    2\eta_t\big(f(x_t)-f(x^\star)\big)
    - \big(2\eta_t\inner{x_t - x^\star}{g_t}
        - \eta_t^2\|g_t\|^2\big)
    - \eta_t^2\|g_t\|^2 \notag\\
    &=
    2\eta_t\Big(
        f(x_t)-f(x^\star)
        - \inner{x_t - x^\star}{g_t}
    \Big).
    \label{eq:Xt-upper-first}
\end{align}
Recalling that $\xi_t := g_t - \nabla f(x_t)$, we can further decompose $\inner{x_t - x^\star}{g_t} = \inner{x_t - x^\star}{\nabla f(x_t)}+ \inner{x_t - x^\star}{\xi_t}$. Substituting this into~\eqref{eq:Xt-upper-first} yields
\begin{equation}
\label{eq:Xt-upper-split}
    X_t
    \le
    2\eta_t\Big(
        f(x_t)-f(x^\star)
        - \inner{x_t - x^\star}{\nabla f(x_t)}
    \Big)
    - 2\eta_t\inner{x_t - x^\star}{\xi_t}.
\end{equation}
We now prove assertion~(i). By convexity of $f$,
\[
    f(x_t) - f(x^\star)
    \le
    \inner{\nabla f(x_t)}{x_t - x^\star},
\]
so the first term in~\eqref{eq:Xt-upper-split} is nonpositive: $f(x_t)-f(x^\star) - \inner{x_t - x^\star}{\nabla f(x_t)} \le 0$. Taking conditional expectations in~\eqref{eq:Xt-upper-split} and using the unbiasedness in Assumption~\ref{as:gradients}(i), we obtain
\begin{align*}
    \mathbb{E}[X_t \mid \mathcal{F}_{t-1}]
    &\le
    2\eta_t\,\mathbb{E}\!\left[
        f(x_t)-f(x^\star)
        - \inner{x_t - x^\star}{\nabla f(x_t)}
        \,\Big|\,
        \mathcal{F}_{t-1}
    \right]
    - 2\eta_t\,\mathbb{E}\!\left[
        \inner{x_t - x^\star}{\xi_t}
        \,\Big|\,
        \mathcal{F}_{t-1}
    \right]\\
    &=
    2\eta_t\Big(
        f(x_t)-f(x^\star)
        - \inner{x_t - x^\star}{\nabla f(x_t)}
    \Big)
    - 2\eta_t \inner{x_t - x^\star}{
        \mathbb{E}[\xi_t \mid \mathcal{F}_{t-1}]
    } \\
    &=
    2\eta_t\Big(
        f(x_t)-f(x^\star)
        - \inner{x_t - x^\star}{\nabla f(x_t)}
    \Big)
    \;\le\; 0,
\end{align*}
which proves~\eqref{eq:drift-negative-proj}. Note that $x_t$ is $\mathcal F_{t-1}$-measurable for each $t\ge 2$, since it is a deterministic function of $(x_1,g_1,\ldots,x_{t-1},g_{t-1})$ via \eqref{eq:proj-sgd-update}.

We now prove assertion~(ii). From~\eqref{eq:Xt-upper-split} and the convexity inequality just used, we also have the pathwise bound
\begin{equation}
\label{eq:Xt-noise-upper}
    X_t
    \;\le\;
    - 2\eta_t\inner{x_t - x^\star}{\xi_t}.
\end{equation}
Fix $\lambda \ge 0$. Multiplying~\eqref{eq:Xt-noise-upper} by $\lambda$ and taking conditional expectations,
\begin{align*}
    \mathbb{E}\!\left[
        \exp\big(\lambda X_t\big)
        \,\Big|\,
        \mathcal{F}_{t-1}
    \right]
    &\le
    \mathbb{E}\!\left[
        \exp\!\big(-2\lambda\eta_t
            \inner{x_t - x^\star}{\xi_t}
        \big)
        \,\Big|\,
        \mathcal{F}_{t-1}
    \right].
\end{align*}
Applying Assumption~\ref{as:gradients}(ii) with
\[
    u := -2\eta_t(x_t - x^\star)
    \ \implies\
    \|u\|^2 = 4\eta_t^2 \|x_t - x^\star\|^2 = 4\eta_t^2 Z_t,
\]
we obtain
\begin{equation*}
    \mathbb{E}\!\left[
        \exp\big(\lambda X_t\big)
        \,\Big|\,
        \mathcal{F}_{t-1}
    \right]
    \le
    \exp\!\left(
        \frac{\lambda^2 \sigma^2 \|u\|^2}{2}
    \right)
    =
    \exp\!\big(2\lambda^2 \sigma^2 \eta_t^2 Z_t\big)
    =
    \exp\!\big(2\lambda^2 \sigma_t^2\big),
\end{equation*}
which is exactly~\eqref{eq:subg-proj}.
\end{proof}
The drift inequality \eqref{eq:drift-negative-proj} is a standard one-step Lyapunov property for projected stochastic approximation; see, e.g., \cite{nemirovski2009robust}. The relevance of the process $X_t$ emerges once we consider its partial sums
\begin{equation}
\label{eq:partial-sum}
    \bar X_t \coloneqq \sum_{s=1}^t X_s,
\end{equation}
which admit a direct algebraic link to the cumulative weighted suboptimality:
\[
    \bar X_t
    =
    2\sum_{s=1}^t \eta_s\big(f(x_s)-f(x^\star)\big)
    - (Z_1 - Z_{t+1})
    - \sum_{s=1}^t \eta_s^2\|g_s\|^2.
\]
Since $Z_{t+1}\ge 0$, it follows that
\[
    2\sum_{s=1}^t \eta_s\big(f(x_s)-f(x^\star)\big)
    \;\le\;
    \bar X_t + Z_1 + \sum_{s=1}^t \eta_s^2\|g_s\|^2.
\]
Dividing by $2S_t$ and recalling the definition of $\bar F_t$ in~\eqref{eq:weighted:subopt}, we obtain
\[
    \bar F_t
    \;\le\;
    \frac{1}{2S_t}
    \left(
        \bar X_t
        + Z_1
        + \sum_{s=1}^t \eta_s^2\|g_s\|^2
    \right).
\]
This inequality already mirrors the structure of the confidence sequence $U_t(\alpha)$ defined in~\eqref{eq:U-def}: the first term comes from an upper bound on $\bar X_t$, while the remaining two terms carry over unchanged. Thus, controlling $\bar X_t$ uniformly over time is the key step in constructing $U_t(\alpha)$.

\bigskip
\noindent\textbf{Step 2: A time-uniform concentration bound for $\bar X_t$}

Our goal is now to show that the partial sums $\bar X_t$ admit a time-uniform upper bound in terms of the effective cumulative variance proxy $\Sigma_{t,\mathrm{eff}}^2$ defined in \eqref{eq:sigma-Sigma-def}. This is where the main probabilistic machinery enters: we construct an exponential supermartingale for each fixed $\lambda$, mix over a geometric grid of $\lambda$'s, and apply Ville’s inequality to obtain a high-probability bound that holds simultaneously for all $t$.

To state the result, define, for $k=0,1,2,\ldots$,
\[
    \lambda_k := \mathrm{e}^{-k/2},
    \qquad
    w_k := \frac{6}{\pi^2 (k+1)^2}.
\]
Note that $\sum_{k= 0}^\infty w_k = 1$. We are now ready to state the time-uniform concentration result for $\bar X_t$.

\begin{lemma}[Time-uniform bound for $\bar X_t$]
\label{lem:cs-X-full}
Fix $\alpha\in(0,\tfrac{2}{\mathrm{e}})$, and let $\{X_t\}_{t\ge 0}$ be the process defined in \eqref{eq:Xt-def-proj}. For $t\ge 0$, define the mixture process
\[
    \bar E_t
    :=
    \sum_{k= 0}^\infty w_k
        \exp\!\left(\lambda_k \bar X_t - 2\lambda_k^2\,\Sigma_t^2\right),
\]
with $\Sigma_t^2$ defined in \eqref{eq:sigma-Sigma-def}. Then $\{\bar E_t\}_{t\ge 0}$ is a nonnegative supermartingale with $\bar E_0 = 1$. Moreover, on the event $\mathcal{E}_\alpha := \bigl\{\forall t\ge1 : \bar E_t < \tfrac{1}{\alpha} \bigr\}$, which satisfies $\mathbb{P}(\mathcal{E}_\alpha) \ge 1 - \alpha$, the following holds: 
\[
    \forall t\ge 1:\
    \bar X_t
    \le
    7\sqrt{\Sigma_{t,\mathrm{eff}}^2
    \Big(\log\frac{2}{\alpha}
    + \log\log(\mathrm e+\Sigma_{t,\mathrm{eff}}^2)\Big)}.
\]
\end{lemma}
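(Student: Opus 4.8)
The plan is to prove the three assertions in sequence: the supermartingale property of $\{\bar E_t\}$, the probability bound on $\mathcal E_\alpha$ via Ville's inequality, and the pathwise bound on $\bar X_t$ obtained by optimizing the mixture over the geometric grid. First I would fix $k$ and show that the single-$\lambda$ process $E_t^{(k)} := \exp(\lambda_k \bar X_t - 2\lambda_k^2 \Sigma_t^2)$ is a nonnegative supermartingale with $E_0^{(k)}=1$. Writing $\bar X_t = \bar X_{t-1} + X_t$ and $\Sigma_t^2 = \Sigma_{t-1}^2 + \sigma_t^2$ gives the multiplicative recursion $E_t^{(k)} = E_{t-1}^{(k)}\exp(\lambda_k X_t - 2\lambda_k^2\sigma_t^2)$. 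Since $\eta_t$ is predictable and $x_t$ is $\mathcal F_{t-1}$-measurable, the quantity $\sigma_t^2 = \sigma^2\eta_t^2 Z_t$ is $\mathcal F_{t-1}$-measurable; pulling it out of the conditional expectation and invoking the conditional sub-Gaussian bound of Lemma~\ref{lem:mgf-proj-subg}(ii), which applies because each $\lambda_k>0$, yields $\mathbb E[E_t^{(k)}\mid\mathcal F_{t-1}] \le E_{t-1}^{(k)}$. The mixture $\bar E_t = \sum_k w_k E_t^{(k)}$ is then a nonnegative supermartingale, being a convex combination (interchanging sum and conditional expectation by Tonelli, since all terms are nonnegative), with $\bar E_0 = \sum_k w_k = 1$.

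Ville's inequality applied to $\{\bar E_t\}$ then gives $\mathbb P(\exists t\ge 1: \bar E_t \ge 1/\alpha) \le \alpha\, \mathbb E[\bar E_0] = \alpha$, and since $\bar E_0 = 1 < 1/\alpha$, this is exactly $\mathbb P(\mathcal E_\alpha)\ge 1-\alpha$. To turn the event $\bar E_t < 1/\alpha$ into a bound on $\bar X_t$, I would use nonnegativity of the summands: on $\mathcal E_\alpha$ we have $w_k E_t^{(k)} \le \bar E_t < 1/\alpha$ for every $k$ and $t$. Taking logarithms, using $\log(1/w_k) = 2\log(k+1) + \log(\pi^2/6)$ together with the numerical fact $\log(\pi^2/6) < \log 2$, and rearranging, yields for each admissible index $k$ the pathwise inequality
\[
    \bar X_t
    \;<\;
    2\lambda_k \Sigma_{t,\mathrm{eff}}^2
    + \frac{1}{\lambda_k}\Big(\log\tfrac{2}{\alpha} + 2\log(k+1)\Big),
\]
where I have used $\Sigma_t^2 \le \Sigma_{t,\mathrm{eff}}^2$.

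It then remains to minimize the right-hand side over the grid. The unconstrained optimizer of the leading two-term tradeoff is of order $\lambda^\star \approx \sqrt{\log(2/\alpha)/\Sigma_{t,\mathrm{eff}}^2}$, and since the grid is geometric with ratio $\mathrm e^{-1/2}$, I would select the index $k^\star(t)$ for which $\lambda_{k^\star}$ lies within a factor $\mathrm e^{1/2}$ of $\lambda^\star$, incurring only a bounded multiplicative discretization cost. Substituting $\lambda_{k^\star}$ and collecting terms produces a bound of the target form $C\sqrt{\Sigma_{t,\mathrm{eff}}^2(\log\tfrac2\alpha + \log\log(\mathrm e + \Sigma_{t,\mathrm{eff}}^2))}$.

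The hard part is controlling this stitching step uniformly in $t$ with the claimed constant, and it splits into three pieces. (a) One must verify $k^\star(t)\ge 0$, i.e. $\lambda^\star \le \lambda_0$: here the floor $\Sigma_{t,\mathrm{eff}}^2 \ge 2(\log\tfrac2\alpha+1)$ caps $\lambda^\star$ below $\tfrac12$, while the choice $\lambda_0 = \sqrt{c_1/\sqrt{\mathrm e}}$ (with $c_1 \ge 1$) keeps the top of the grid above it. (b) The discretization loss from replacing $\lambda^\star$ by the nearest grid point contributes only a universal factor, absorbed into the numerical constant $6$. (c) The penalty $2\log(k^\star+1)$ must be absorbed into $\log\log(\mathrm e + \Sigma_{t,\mathrm{eff}}^2)$: because $\lambda_{k^\star}\approx\lambda^\star$ forces $k^\star$ of order $\log \Sigma_{t,\mathrm{eff}}^2$, one gets $\log(k^\star+1)$ of order $\log\log \Sigma_{t,\mathrm{eff}}^2$, which is the source of the iterated logarithm. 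The constant $c_1$ in \eqref{eq:constant-c_1} is calibrated precisely so that this penalty is dominated by $c_1\log\log(\mathrm e + \Sigma_{t,\mathrm{eff}}^2)$ across the entire admissible range $\Sigma_{t,\mathrm{eff}}^2 \in [\,2(\log\tfrac2\alpha+1),\, \widetilde\Sigma_{\infty,\mathrm{eff}}^2\,]$, which is finite since $V_\infty<\infty$. Combining (a)--(c) yields $C = 6\sqrt{c_1}$ and completes the proof.
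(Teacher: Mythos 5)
Your first three steps---the supermartingale property of each $E_t^{(k)}$ (using predictability of $\sigma_t^2$ and Lemma~\ref{lem:mgf-proj-subg}(ii)), the mixture argument via Tonelli, Ville's inequality, and the per-$k$ rearrangement on $\mathcal{E}_\alpha$ giving $\bar X_t < 2\lambda_k \Sigma_{t,\mathrm{eff}}^2 + \lambda_k^{-1}\big(\log\tfrac{2}{\alpha} + 2\log(k+1)\big)$---are correct and substantively identical to the paper's Steps (i)--(iii). The gap is in the stitching step. You select the grid index to track the optimizer of the \emph{penalty-free} two-term tradeoff, $\lambda^\star \asymp \sqrt{L_\alpha/v}$ with $L_\alpha := \log\tfrac{2}{\alpha}$ and $v := \Sigma_{t,\mathrm{eff}}^2$. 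With that choice the penalty term contributes
\[
    \frac{2\log(k^\star+1)}{\lambda_{k^\star}}
    \;\asymp\;
    \sqrt{\frac{v}{L_\alpha}}\,\log(k^\star+1)
    \;\lesssim\;
    c_1 L_v \sqrt{\frac{v}{L_\alpha}},
    \qquad L_v := \log\log(\mathrm e + v),
\]
and absorbing this into the target $6\sqrt{c_1}\sqrt{v(L_\alpha+L_v)}$ would require $c_1 L_v^2 \lesssim L_\alpha(L_\alpha+L_v)$, which fails when $\alpha$ is near $2/\mathrm e$ (so $L_\alpha \approx 1$) and $L_v$ is large; since $c_1$ itself grows like $\log\log\widetilde\Sigma_{\infty,\mathrm{eff}}^2$, the overshoot is a factor of order $\sqrt{c_1 L_v}$, unbounded over the admissible problem class. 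Your route therefore proves a bound of the lemma's shape only with a constant of order $c_1^{3/2}$, not the stated $C = 6\sqrt{c_1}$.

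The missing idea is that the grid index must be matched to the \emph{penalty-inclusive} scale. The paper takes $k(v) = \lfloor \log(v/(L_\alpha + L_v)) \rfloor$, so that $\lambda_{k(v)} \asymp \lambda_0\sqrt{(L_\alpha+L_v)/v}$; then $D_v := L_\alpha + 2\log(k(v)+1) \le 2c_1(L_\alpha + L_v)$ by Lemma~\ref{lem:supporting:bound-loglog}, and the penalty multiplier $\sqrt{v/(L_\alpha+L_v)}$ (rather than your $\sqrt{v/L_\alpha}$) is precisely what lets $D_v$ be absorbed with only the loss $c_2 = 2c_1$. Two further calibrations you skipped are then essential to the claimed constant: first, $\lambda_0$ is not free but set to $\sqrt{c_2/(2\sqrt{\mathrm e})}$ so that the two terms $\mathrm e^{k/2}D_v/\lambda_0$ and $2\lambda_0\mathrm e^{-k/2}v$ balance, each equal to $\sqrt{2\sqrt{\mathrm e}\,c_2}\sqrt{v(L_\alpha+L_v)}$, giving the final $2\sqrt{2\sqrt{\mathrm e}\,c_2} \le 6\sqrt{c_1}$; second, with this $k(v)$, nonnegativity $k(v)\ge 0$ is equivalent to $v > L_\alpha + L_v$, which is exactly what Lemma~\ref{lem:explicit-threshold} extracts from the floor $v \ge 2(L_\alpha + 1)$---your check $\lambda^\star \le \lambda_0$ certifies the wrong target. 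With these replacements your argument becomes the paper's proof.
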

\begin{proof}
For clarity, we divide the proof into six steps.

\medskip
\noindent
\textbf{Step (i): $\{\bar E_t\}_{t\ge 0}$ is a nonnegative supermartingale.}
For any fixed $\lambda\ge 0$, define
\[
    E_t(\lambda)
    :=
    \exp\!\left(\lambda \bar X_t - 2\lambda^2 \Sigma_t^2\right)
    \qquad \forall t\ge 0,
\]
with the convention $\bar X_0=0$ and $\Sigma_0^2=0$, so $E_0(\lambda)=1$. Using $\bar X_t=\bar X_{t-1}+X_t$ and $\Sigma_t^2=\Sigma_{t-1}^2+\sigma_t^2$, we compute
\begin{align*}
    \mathbb{E}[E_t(\lambda)\mid \mathcal{F}_{t-1}]
    &=
    \mathbb{E}\!\left[
        \exp\!\left(
            \lambda(\bar X_{t-1}+X_t)
            - 2\lambda^2(\Sigma_{t-1}^2+\sigma_t^2)
        \right)
        \Bigm| \mathcal{F}_{t-1}
    \right] \\
    &=
    \exp\!\left(\lambda \bar X_{t-1} - 2\lambda^2 \Sigma_{t-1}^2\right)
    \mathbb{E}\!\left[
        \exp\!\left(
            \lambda X_t - 2\lambda^2 \sigma_t^2
        \right)
        \Bigm| \mathcal{F}_{t-1}
    \right] \\
    &=
    E_{t-1}(\lambda)\,
    \mathbb{E}\!\left[
        \exp\!\left(\lambda X_t - 2\lambda^2 \sigma_t^2\right)
        \Bigm| \mathcal{F}_{t-1}
    \right].
\end{align*}
By the conditional sub-Gaussian assumption \eqref{eq:subg-proj}, the conditional expectation in the last line is at most one. Hence
\[
    \mathbb{E}[E_t(\lambda)\mid \mathcal{F}_{t-1}]
    \le E_{t-1}(\lambda),
\]
so $\{E_t(\lambda)\}_{t\ge 0}$ is a nonnegative supermartingale with $E_0(\lambda)=1$. Finally, since each $E_t(\lambda_k)$ is a nonnegative supermartingale and $(w_k)_{k\ge 0}$ is a fixed sequence of nonnegative weights summing to one, $\{\bar E_t\}_{t\ge 0}$ is also a nonnegative supermartingale with $\bar E_0 = \sum_{k\ge 0}w_k = 1$.

\bigskip
\noindent
\textbf{Step (ii): A first upper bound on $\bar X_t$.}
By Ville’s inequality applied to the nonnegative supermartingale $\bar E_t$,
\[
    \mathbb{P}\!\left(
        \sup_{t\ge 1} \bar E_t \ge \tfrac{1}{\alpha}
    \right)
    \le \alpha.
\]
Equivalently, the event
\[
    \mathcal{E}_\alpha := \Big\{\forall t\ge 1:\ \bar E_t < \tfrac{1}{\alpha}\Big\}
\]
satisfies $\mathbb{P}(\mathcal{E}_\alpha)\ge 1-\alpha$. Fix $\omega\in\mathcal{E}_\alpha$ and a time $t\ge 1$ with $\Sigma_t^2>0$. For notational simplicity, we suppress the dependence on $\omega$ in what follows. On $\mathcal{E}_\alpha$ we have
\[
    \bar E_t
    =
    \sum_{k\ge 0} w_k
    \exp\!\left(\lambda_k \bar X_t - 2\lambda_k^2\Sigma_t^2\right)
    < \frac{1}{\alpha}.
\]
Hence for every $k\ge 0$,
\[
    w_k\exp\!\left(\lambda_k \bar X_t-2\lambda_k^2\Sigma_t^2\right)
    < \frac{1}{\alpha},
\]
which implies
\[
    \lambda_k \bar X_t-2\lambda_k^2\Sigma_t^2
    < \log\frac{1}{\alpha w_k}.
\]
Rearranging this inequality yields, for each $k\ge 0$,
\[
    \bar X_t
    <
    \frac{\log(1/(\alpha w_k))}{\lambda_k}
    + 2\lambda_k\Sigma_t^2
    =: B_k(\Sigma_t^2).
\]
Noticing that $B_k(\cdot)$ is nondecreasing and $\Sigma_t^2 \le \Sigma_{t,\mathrm{eff}}^2$, we further have
\[
    \bar X_t < B_k(\Sigma_{t,\mathrm{eff}}^2)
    \qquad \forall k\ge 0.
\]
and since this holds simultaneously for all $k\ge 0$, we obtain the bound
\begin{equation}
\label{eq:inf-Bk}
    \bar X_t
    \le \inf_{k\ge 0} B_k(\Sigma_{t,\mathrm{eff}}^2).
\end{equation}
It remains to upper bound the infimum in terms of $\Sigma_{t,\mathrm{eff}}^2$ and $\alpha$.

\bigskip
\noindent
\textbf{Step (iii): Explicit form of $B_k(\Sigma_{t,\mathrm{eff}}^2)$.}
For notational simplicity, set
\[
    v := \Sigma_{t,\mathrm{eff}}^2,
    \qquad
    L_\alpha := \log\frac{2}{\alpha} \ge \log\frac{\pi^2}{6 \alpha}.
\]
Using $w_k = {6}/(\pi^2 (k+1)^2)$ and $\lambda_k = \mathrm{e}^{-k/2}$, we have
\[
    \log\frac{1}{\alpha w_k}
    = \log\frac{\pi^2}{6 \alpha} + 2\log(k+1)
    \le L_\alpha + 2\log(k+1),
\]
and $1/{\lambda_k} = \mathrm{e}^{k/2}$. Hence, for $v>0$,
\[
    B_k(v)
    :=
    \frac{\log(1/(\alpha w_k))}{\lambda_k}
    + 2\lambda_k v
    \le
    \mathrm{e}^{k/2}
    \big(L_\alpha + 2\log(k+1)\big)
    + 2 \mathrm{e}^{-k/2} v.
\]

\bigskip
\noindent
\textbf{Step (iv): Choose $k$ as a function of $v$.}
Define
\[
    L_v := \log\log(\mathrm e+v) > 0.
\]
We will pick an integer $k=k(v)$ such that:
\begin{enumerate}
    \item $\mathrm{e}^{k(v)/2}$ is within a constant factor of
    \[
        z_v := \sqrt{\frac{v}{L_\alpha + L_v}},
    \]
    \item $\log(k(v)+1)$ is bounded by a constant multiple of $L_v$.
\end{enumerate}
One convenient choice is
\[
    k(v) := \left\lfloor \log\frac{v}{L_\alpha + L_v} \right\rfloor
    \qquad\text{for } v>0,
\]
which immediately implies that $\mathrm{e}^{k(v)/2} \le z_v \le \mathrm{e}^{(k(v)+1)/2}$. Since $v = \Sigma_{t,\mathrm{eff}}^2 \ge 2(\log \tfrac{2}{\alpha}+1)$ by definition, Lemma~\ref{lem:explicit-threshold} guarantees that $k(v)\ge 0$. For the lemma, we note that $L_\alpha\ge 1$ holds since $\alpha \leq \tfrac{2}{\mathrm{e}}$. Therefore, for this choice, we have that
\begin{equation}\label{eq:zv-bounds}
    \frac{1}{\sqrt{\mathrm{e}}}\, z_v \le \mathrm{e}^{k(v)/2} \le z_v,
    \qquad
    \log(k(v)+1) \le L_v + 1,
\end{equation}
where the second inequality in \eqref{eq:zv-bounds} can be justified as follows:
\[
    \log\!\big(k(v)+1\big)
    \;\le\;
    \log\!\left(
        \log\frac{v}{L_\alpha + L_v} + 1
    \right)
    \;\le\;
    \log(\log v + 1)
    \;\le\;
    \log(\mathrm e \log v)
    \;\le\;
    1 + \log\log(\mathrm e + v).
\]
Here, the first inequality follows from the definition of $k(v)$. The second inequality uses the fact that $L_\alpha + L_v > 1$. The third inequality follows from $\log v + 1 \le \mathrm e \log v$ for all $v \ge \mathrm e$. Finally, the last inequality holds because $\log(\mathrm e + v) \ge \log v$. For this choice of $k = k(v)$, define
\[
    D_v := L_\alpha + 2\log\big(k(v)+1\big).
\]
Using \eqref{eq:zv-bounds}, we have $2\log(k(v)+1) \le 2L_v + 2$, and therefore
\[
    D_v
    \le
    L_\alpha + 2 + 2L_v
    \le
    3\big(L_\alpha + L_v\big),
\]
where the last inequality follows from the fact that $L_\alpha \ge 1$.

\bigskip
\noindent
\textbf{Step (v): Bounding $B_{k(v)}(v)$.}
Using \eqref{eq:zv-bounds} again, we have
\[
    \mathrm{e}^{k(v)/2}
    \le z_v
    = \sqrt{\frac{v}{L_\alpha + L_v}},
    \qquad
    \mathrm{e}^{-k(v)/2}
    \le \sqrt{\mathrm{e}}
        \sqrt{\frac{L_\alpha + L_v}{v}}.
\]
Therefore,
\[
    {\mathrm{e}^{k(v)/2}} D_v
    \le
    3
    \sqrt{v\big(L_\alpha + L_v\big)},
\]
and
\[
    2 \mathrm{e}^{-k(v)/2} v
    \le
    2 \sqrt{\mathrm{e}}
    \sqrt{\frac{L_\alpha + L_v}{v}}\, v 
    =
    2\sqrt{\mathrm{e}}
    \sqrt{v\big(L_\alpha + L_v\big)}.
\]
Summing these two bounds, we obtain
\[
    B_{k(v)}(v)
    \le
    \big(3+2\sqrt{\mathrm e}\big)\sqrt{v\big(L_\alpha + L_v\big)}
    \le
    7\sqrt{v\big(L_\alpha + L_v\big)}.
\]
Finally, since $\inf_k B_k(v) \le B_{k(v)}(v)$ for each fixed $v>0$, we conclude that
for all $v>0$,
\[
    \inf_{k\ge 0} B_k(v)
    \le
    7\sqrt{v\big(L_\alpha + \log\log(\mathrm e+v)\big)}
    =
    7\sqrt{v\big(\log\tfrac{2}{\alpha} + \log\log(\mathrm e+v)\big)}.
\]

\bigskip
\noindent
\textbf{Step (vi): Conclude the time-uniform bound.}
Combining this with \eqref{eq:inf-Bk}, we obtain, on the event
$\mathcal{E}_\alpha$,
\[
    \bar X_t
    \le
    7\sqrt{\Sigma_{t,\mathrm{eff}}^2
        \Big(\log\tfrac{2}{\alpha} +\log\log(\mathrm e+\Sigma_{t,\mathrm{eff}}^2)\Big)}
    \qquad\forall t\ge 1,
\]
which gives the stated bound.
\end{proof}

This lemma is the probabilistic engine of the paper: whenever one can certify a conditional sub-Gaussian inequality of the form~\eqref{eq:subg-proj}, one obtains a time-uniform concentration inequality for the partial sums, with a variance scale $\Sigma_t^2$ and only a mild iterated-logarithm correction. In particular, $\bar X_t$ remains controlled simultaneously over all $t\ge 1$ with probability at least $1-\alpha$.

\bigskip
\noindent
\textbf{Step 3: From martingale sums to cumulative suboptimality.}

Using the definition of $X_t$, we now rewrite $\bar X_t$ in terms of the cumulative weighted suboptimality, the distance terms $Z_t$, and the squared gradient terms. As explained at the end of Step~1, a simple telescoping argument yields the desired upper bound on the numerator of $\bar F_t$.

\begin{lemma}[From $\bar X_t$ to cumulative suboptimality]
\label{lem:suboptimality-bound-proj}
For any $\alpha\in(0,\tfrac{2}{\mathrm{e}})$, with probability at least $1-\alpha$, the following holds simultaneously for all $t\ge1$:
\begin{equation}\label{eq:sum-eta-gap-proj}
    \sum_{s=1}^t \eta_s\big(f(x_s)-f(x^\star)\big)
    \;\le\;
    \frac{1}{2}
    \Bigg(
        7\,\sqrt{
            \Sigma_{t,\mathrm{eff}}^2\Big(\log\tfrac{2}{\alpha}
            + \log\log(\mathrm e + \Sigma_{t,\mathrm{eff}}^2)\Big)}
        + Z_1
        + \sum_{s=1}^t \eta_s^2\|g_s\|^2
    \Bigg).
\end{equation}
\end{lemma}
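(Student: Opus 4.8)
The plan is to combine the deterministic telescoping identity already derived at the end of Step~1 with the time-uniform concentration bound of Lemma~\ref{lem:cs-X-full}. The entire probabilistic difficulty has been absorbed into the latter, so what remains is purely algebraic bookkeeping applied on a high-probability event.

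First I would recall that $X_s = 2\eta_s\big(f(x_s)-f(x^\star)\big) - (Z_s - Z_{s+1}) - \eta_s^2\|g_s\|^2$ from \eqref{eq:Xt-def-proj} and sum it over $s=1,\ldots,t$. The distance increments telescope, yielding
\[
    \bar X_t = 2\sum_{s=1}^t \eta_s\big(f(x_s)-f(x^\star)\big) - (Z_1 - Z_{t+1}) - \sum_{s=1}^t \eta_s^2\|g_s\|^2.
\]
Rearranging and using $Z_{t+1} = \|x_{t+1}-x^\star\|^2 \ge 0$ to discard the $-Z_{t+1}$ term gives the pathwise (hence almost-sure) inequality
\[
    2\sum_{s=1}^t \eta_s\big(f(x_s)-f(x^\star)\big) \le \bar X_t + Z_1 + \sum_{s=1}^t \eta_s^2\|g_s\|^2,
\]
valid for every $t\ge 1$ on every sample path.

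Next I would invoke Lemma~\ref{lem:cs-X-full}: on the event $\mathcal{E}_\alpha$, which satisfies $\mathbb{P}(\mathcal{E}_\alpha)\ge 1-\alpha$, the bound $\bar X_t \le C\sqrt{\Sigma_{t,\mathrm{eff}}^2\big(\log\tfrac{2}{\alpha} + \log\log(\mathrm e+\Sigma_{t,\mathrm{eff}}^2)\big)}$ holds simultaneously for all $t\ge 1$. Substituting this into the displayed pathwise inequality and dividing by $2$ produces exactly \eqref{eq:sum-eta-gap-proj}, with the same constant $C$. Because the telescoping inequality holds on every path and the concentration bound holds on $\mathcal{E}_\alpha$ uniformly in $t$, their conjunction holds on $\mathcal{E}_\alpha$ uniformly in $t$, which is precisely the claimed simultaneous-over-$t$ guarantee at confidence level $1-\alpha$.

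I do not anticipate any genuine obstacle here, since all the difficulty resides in Lemma~\ref{lem:cs-X-full}. The only points warranting minor care are confirming that the telescoping cancellation leaves precisely $Z_1 - Z_{t+1}$ rather than an index-shifted term, and ensuring that the almost-sure pathwise inequality and the high-probability event are combined so that coverage is inherited \emph{uniformly} in $t$ rather than only pointwise. Both are routine.
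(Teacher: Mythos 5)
Your proposal is correct and follows essentially the same route as the paper's proof: telescoping the definition of $X_s$ in \eqref{eq:Xt-def-proj}, discarding $Z_{t+1}\ge 0$, and substituting the time-uniform bound from Lemma~\ref{lem:cs-X-full} on the event $\mathcal{E}_\alpha$ before dividing by $2$. The only difference is the trivial ordering of steps (you telescope first and invoke the concentration bound second, while the paper does the reverse), and your closing remark about combining the pathwise inequality with the high-probability event uniformly in $t$ is handled correctly.
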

\begin{proof}
From Lemma~\ref{lem:cs-X-full} we know that for any $\alpha\in(0,\tfrac{2}{\mathrm{e}})$, there exists an event $\mathcal{E}_\alpha$ with $\mathbb{P}(\mathcal{E}_\alpha)\ge 1-\alpha$ such that, on $\mathcal{E}_\alpha$,
\begin{equation}
\label{eq:Xt-sum-bound}
    \forall t\ge1:\qquad
    \sum_{s=1}^t X_s
    \le
    7\sqrt{\Sigma_{t,\mathrm{eff}}^2\Big(\log\tfrac{2}{\alpha}
        + \log\log(\mathrm e + \Sigma_{t,\mathrm{eff}}^2)\Big)}.
\end{equation}
Next, we express $\sum_{s=1}^t X_s$ in terms of the cumulative suboptimality. Summing the definition \eqref{eq:Xt-def-proj} from $s=1$ to $t$ gives
\begin{align*}
    \sum_{s=1}^t X_s
    &=
    2\sum_{s=1}^t \eta_s\big(f(x_s)-f(x^\star)\big)
    - \sum_{s=1}^t (Z_s - Z_{s+1})
    - \sum_{s=1}^t \eta_s^2\|g_s\|^2
    \\&=2\sum_{s=1}^t \eta_s\big(f(x_s)-f(x^\star)\big)
    - (Z_1 - Z_{t+1})
    - \sum_{s=1}^t \eta_s^2\|g_s\|^2.
\end{align*}
Rearranging this identity, we obtain
\begin{equation}\label{eq:sum-gap-vs-X}
    2\sum_{s=1}^t \eta_s\big(f(x_s)-f(x^\star)\big)
    =
    \sum_{s=1}^t X_s
    + (Z_1 - Z_{t+1})
    + \sum_{s=1}^t \eta_s^2\|g_s\|^2.
\end{equation}
On the event $\mathcal{E}_\alpha$, we may bound $\sum_{s=1}^t X_s$ using \eqref{eq:Xt-sum-bound}. Moreover, $Z_{t+1} = \|x_{t+1}-x^\star\|^2\ge0$, so we can drop it. Plugging these bounds into \eqref{eq:sum-gap-vs-X}, we find that on $\mathcal{E}_\alpha$, for all $t\ge1$,
\[
    2\sum_{s=1}^t \eta_s\big(f(x_s)-f(x^\star)\big)
    \le
    7\sqrt{
        \Sigma_{t,\mathrm{eff}}^2\Big(\log\tfrac{2}{\alpha}
        + \log\log(\mathrm e + \Sigma_{t,\mathrm{eff}}^2)\Big)}
    + Z_1
    + \sum_{s=1}^t \eta_s^2\|g_s\|^2.
\]
Dividing both sides by $2$ yields \eqref{eq:sum-eta-gap-proj}, and the bound holds simultaneously for all $t\ge1$ on the event $\mathcal{E}_\alpha$, which has probability at least $1-\alpha$.
\end{proof}

This lemma is the bridge from the martingale world back to optimization: it translates the time-uniform control of the process $\bar X_t$ into a time-uniform upper bound on the cumulative weighted suboptimality $\sum_{s=1}^t \eta_s(f(x_s)-f(x^\star))$.  Finally, dividing both sides by $S_t$ yields a time-uniform bound for the weighted average suboptimality $\bar F_t$, completing the proof of Theorem~\ref{thm:anytime-suboptimality}.

%--------------------------------------------------------------------------------

\subsection{Clarifying remarks and intuition}
\label{subsec:remarks-CS}

The proof of Theorem~\ref{thm:anytime-suboptimality} relies on several ideas from time-uniform sequential analysis that may be unfamiliar to readers with a background in stochastic optimization. In this subsection we collect two clarifying remarks intended to illuminate the construction. First, we explain the role of the mixture supermartingale $\bar E_t$ and why a single-parameter exponential martingale cannot yield a time-uniform bound without sacrificing optional stopping. Second, we provide a heuristic ``continuous optimization'' argument that motivates the geometric grid over $\lambda_k$ and the emergence of an iterated-logarithm term in the boundary. 

\begin{remark}[Why the mixture $\bar E_t$ is necessary]
To understand the need for mixtures in Lemma~\ref{lem:cs-X-full}, it is instructive to redo the key step with a \emph{single} exponential supermartingale. For any fixed $\lambda>0$, define
\[
    E_t(\lambda)
    :=
    \exp\!\big(\lambda \bar X_t - 2\lambda^2 \Sigma_t^2\big),
\]
as in Step~(i) of Lemma~\ref{lem:cs-X-full}.  By the conditional sub-Gaussian property \eqref{eq:subg-proj}, $\{E_t(\lambda)\}_{t\ge 0}$ is a nonnegative supermartingale with $E_0(\lambda)=1$. Ville's inequality therefore gives
\begin{equation}\label{eq:single-ville-remark}
    \mathbb{P}\!\left(
        \sup_{t\ge 1} E_t(\lambda) \,\ge\, \frac{1}{\alpha}
    \right)
    \,\le\, \alpha.
\end{equation}
Equivalently, with probability at least $1-\alpha$,
\[
    \forall t\ge 1:\
    E_t(\lambda)
    <
    \frac{1}{\alpha}
    \ \iff\
    \lambda \bar X_t - 2\lambda^2 \Sigma_t^2
    <
    \log\frac{1}{\alpha}.
\]
Rearranging yields the deterministic upper bound
\begin{equation}\label{eq:barX-single-lambda}
    \bar X_t
    <
    \frac{\log(1/\alpha)}{\lambda}
    +
    2\lambda \Sigma_t^2
    \le
    \frac{\log(1/\alpha)}{\lambda}
    +
    2\lambda \Sigma_{t,\mathrm{eff}}^2
    \qquad \forall t\ge 1.
\end{equation}
Now fix a time $t$ and, for that $t$, treat the right-hand side as a function of $\lambda$:
\begin{equation}\label{eq:B-lambda-t-def}
    B(\lambda,t)
    :=
    \frac{\log(1/\alpha)}{\lambda}
    + 2\lambda \Sigma_{t,\mathrm{eff}}^2,
    \qquad \lambda>0.
\end{equation}
This function is minimized explicitly at
\begin{equation}\label{eq:lambda-star}
    \lambda^\star(t)
    =
    \sqrt{\frac{\log(1/\alpha)}{2\Sigma_{t,\mathrm{eff}}^2}},
\end{equation}
with minimum value
\begin{equation}
\label{eq:B-lambda-star}
    B\big(\lambda^\star(t),t\big)
    =
    2\sqrt{2}\sqrt{\Sigma_{t,\mathrm{eff}}^2 \log\frac{1}{\alpha}}
    \le
    2\sqrt{2}\sqrt{\Sigma_{t,\mathrm{eff}}^2 \log\frac{2}{\alpha}}.
\end{equation}
Thus, for a \emph{fixed} time~$t$, a single exponential supermartingale produces an upper bound with the scaling $\sqrt{\Sigma_{t,\mathrm{eff}}^2\log(2/\alpha)}$. This matches the leading term that appears in Theorem~\ref{thm:anytime-suboptimality}, but \emph{without} the additional iterated-logarithm factor that arises when enforcing time-uniform validity.

The problem arises from the fact that $\lambda^\star(t)$ in \eqref{eq:lambda-star} depends on $\Sigma_{t,\mathrm{eff}}^2$ and is therefore $\mathcal{F}_t$-measurable: it is chosen \emph{after observing the data up to time $t$}. Substituting this data-dependent $\lambda^\star(t)$ back into $E_t(\lambda)$ destroys the supermartingale structure, so Ville's inequality cannot be applied time-uniformly. With a single exponential process one faces a choice: either fix $\lambda$ in advance (yielding time-uniform but suboptimal bounds for many $t$), or optimize $\lambda$ at each $t$ (yielding sharp fixed-time bounds but no optional-stopping guarantee).

The mixture construction in Lemma~\ref{lem:cs-X-full} reconciles these goals by fixing a countable family of parameters \emph{a priori} and combining the corresponding exponential supermartingales. This ``method of mixtures'' (also called pseudo-maximization) goes back to Robbins and Siegmund's classical work on sequential boundary crossing and is developed systematically in the theory of self-normalized processes (see \cite{de2009self}). Modern confidence-sequence constructions build on this principle in various forms, including explicit mixture and stitching frameworks such as the sub-$\psi$ approach in~\cite{howard2021time}, as well as betting- and e-process-based methods that aggregate families of nonnegative supermartingales; see \cite{waudby2024estimating}.   \hfill $\clubsuit$
\end{remark}

\begin{remark}[Intuition for the geometric grid $\{\lambda_k\}_{k\ge 0}$]
The mixture construction in Lemma~\ref{lem:cs-X-full} hinges on choosing a \emph{geometric} grid of tuning parameters $\lambda_k = \mathrm e^{-k/2}$. Here we provide intuition for this choice by examining how the envelope $B_k(\Sigma_{t,\mathrm{eff}}^2)$ is minimized, and how the geometry of the grid matches the natural scale of the problem. Let $v \coloneqq \Sigma_{t,\mathrm{eff}}^2$ and recall the upper envelope
\[
    B_k(v)
    :=
    \frac{\log(1/(\alpha w_k))}{\lambda_k}
    + 2\lambda_k v,
    \qquad
    \lambda_k = \mathrm e^{-k/2},
    \quad
    w_k = \frac{6}{\pi^2(k+1)^2}.
\]
Momentarily regard $k$ as a continuous variable and write $z=\mathrm e^{k/2}>0$. Then
\[
    B_k(v)
    =
    D(z)\,z
    + 2\,\frac{v}{z},
\]
where
\[
    D(z)
    :=
    \log\frac{\pi^2}{6\alpha}
    + 2\log(k+1)
    =
    \log\frac{\pi^2}{6\alpha}
    + 2\log\!\big(1 + 2\log z\big).
\]
The key observation is that $D(z)$ varies \emph{very slowly} in $z$: once $z$ is chosen near its optimal scale, $D(z)$ grows only like
$\log\log(\mathrm e+z)$, a negligible rate compared to any power of $z$. As a heuristic, we therefore \emph{treat $D(z)$ as approximately constant} and consider the simplified objective
\[
    B(z,v)
    :=
    D\,z
    + 2\,\frac{v}{z},
\]
where $D>0$ is held fixed. The function $B(z,v)$ is strictly convex in $z>0$, and its minimum is attained at
\[
    z^\star
    =
    \sqrt{\frac{2v}{D}}
    \ \implies\
    \lambda^\star
    =
    \frac{1}{z^\star}
    =
    \sqrt{\frac{D}{2v}}.
\]
Substituting $z^\star$ back into $B(z,v)$ yields
\[
    \min_{z > 0} B(z,v)
    =
    2\sqrt{2Dv}.
\]
Heuristically, since $D(z) \approx \log\tfrac{\pi^2}{6\alpha} + \log\log(\mathrm e+z)$, which at the relevant scale is equivalent up to
constants to $\log\tfrac{2}{\alpha} + \log\log(\mathrm e+v)$, this suggests the leading-order behavior
\[
    \inf_{k\ge 0} B_k(v)
    \;\lesssim\;
    \sqrt{v\Big(\log\tfrac{2}{\alpha} + \log\log(\mathrm e+v)\Big)}.
\]
The slowly varying term $\log(k+1)$ introduces an iterated-logarithm correction, where the $\log\log(\mathrm e+v)$ factor arises precisely from the slow growth of $\log(k+1)$ on this geometric grid. In particular, for a fixed time $t$, the
ideal tuning satisfies $\lambda^\star(t)\asymp \Sigma_{t,\mathrm{eff}}^{-1}$. Since $\Sigma_{t,\mathrm{eff}}^2$ is unknown in
advance and varies with the trajectory, a fixed geometric grid of the form $\lambda_k=\mathrm e^{-k/2}$ ensures that for every $t$ there exists a grid point $\lambda_k$ within a constant factor of the ideal choice $\lambda^\star(t)$.

In short, the grid is chosen so that: (i) it spans the relevant scales of $\Sigma_{t,\mathrm{eff}}^2$ via exponentially spaced search points, and (ii) at least one $\lambda_k$ is guaranteed to be within a constant factor of the ideal minimizer, thereby achieving near-optimal behavior across all times $t$. This ``scale-matching'' intuition for geometric discretizations is common in mixture
and stitching arguments for martingale concentration; see, for example, the heuristic discussion surrounding Theorem~1 in~\cite{howard2021time} and the broader treatment of pseudo-maximization via mixtures in self-normalized processes~\cite{de2009self}. \hfill $\clubsuit$
\end{remark}

%--------------------------------------------------------------------------------

\subsection{Proof of Theorem~\ref{thm:anytime-stationarity:nonconvex}}
\label{subsec:proof:thm:anytime-stationarity:nonconvex}

The proof of Theorem~\ref{thm:anytime-stationarity:nonconvex} follows the same overall structure as in the convex case, but the technical arguments differ in substantive ways due to the shift from optimality to stationarity and from convexity to smoothness assumptions. As the time-uniform concentration machinery is already developed in the
convex setting, we adopt a more direct presentation here, emphasizing the distinct smoothness-based decomposition and the resulting martingale structure that drive the nonconvex analysis. 

In contrast to the convex analysis, where the argument centers on the process $\bar X_t$, the nonconvex proof is organized around the cumulative martingale term
\begin{equation}
\label{eq:MT-def}
    \bar Y_t
    :=
    -\sum_{s=1}^t \eta_s \langle \nabla f(x_s), \xi_s\rangle,
    \qquad
    \text{with } \xi_s := g_s - \nabla f(x_s),
\end{equation}
which captures the aggregated effect of stochastic gradient noise along the SGD trajectory. Using a smoothness-based descent inequality, we rewrite the weighted stationarity measure in terms of a descent contribution and the martingale term $\bar Y_t$. We then establish conditional sub-Gaussian control of $\bar Y_t$ and construct an associated exponential supermartingale. A time-uniform concentration bound for $\bar Y_t$, combined with a telescoping argument and normalization by $S_t$, yields an anytime-valid upper confidence sequence for the weighted stationarity measure $\bar G_t$.

We begin by deriving a decomposition of the weighted stationarity measure along the SGD trajectory. In contrast to the convex case, where progress is tracked through the distance process $Z_t$, the nonconvex analysis relies on the smoothness descent inequality. The following lemma expresses the cumulative weighted squared gradient norm as the sum of a smoothness-based descent term and a martingale error term induced by stochastic gradient noise.

\begin{lemma}[Weighted stationarity decomposition]
\label{lem:descent-decomposition}
Under Assumption~\ref{as:smoothness}, let $\{x_s\}$ be generated by the SGD recursion
\eqref{eq:proj-sgd-update} with $\mathcal{X}=\mathbb{R}^d$. Then, for every $t\ge 1$,
\begin{equation}
\label{eq:descent-decomposition}
    \sum_{s=1}^t \eta_s \|\nabla f(x_s)\|^2
    \;\le\;
    f(x_1)
    +\frac{L}{2}\sum_{s=1}^t \eta_s^2 \|g_s\|^2
    + \bar Y_t,
\end{equation}
where $\bar Y_t$ is defined in \eqref{eq:MT-def}.
\end{lemma}
\begin{proof}
Because $f$ is $L$-smooth, for every $s\ge 1$ we have the standard upper bound
\begin{equation}
\label{eq:smoothness-upper}
    f(x_{s+1})
    \le
    f(x_s)
    +\langle \nabla f(x_s),x_{s+1}-x_s\rangle
    +\frac{L}{2}\|x_{s+1}-x_s\|^2.
\end{equation}
Substituting the SGD update $x_{s+1}-x_s=-\eta_s g_s$ and rearranging yields
\begin{equation}
\label{eq:innerprod-lower}
    \eta_s\langle \nabla f(x_s),g_s\rangle
    \le
    f(x_s)-f(x_{s+1})
    +\frac{L}{2}\eta_s^2\|g_s\|^2.
\end{equation}
Writing $g_s=\nabla f(x_s)+\xi_s$, we have $\langle \nabla f(x_s),g_s\rangle = \|\nabla f(x_s)\|^2+\langle \nabla f(x_s),\xi_s\rangle$. Substituting this decomposition into \eqref{eq:innerprod-lower} yields
\begin{equation}
\label{eq:key-reduction}
    \eta_s\|\nabla f(x_s)\|^2
    \le
    f(x_s)-f(x_{s+1})
    +\frac{L}{2}\eta_s^2\|g_s\|^2
    -\eta_s\langle \nabla f(x_s),\xi_s\rangle.
\end{equation}
Summing \eqref{eq:key-reduction} over $s=1,\dots,t$ and telescoping the function values gives
\[
    \sum_{s=1}^t \eta_s\|\nabla f(x_s)\|^2
    \le
    f(x_1)-f(x_{t+1})
    +\frac{L}{2}\sum_{s=1}^t \eta_s^2\|g_s\|^2
    -\sum_{s=1}^t \eta_s\langle \nabla f(x_s),\xi_s\rangle.
\]
Since $f$ is nonnegative by construction, $f(x_{t+1})\ge 0$, and the result follows with
$\bar Y_t$ defined in \eqref{eq:MT-def}.
\end{proof}

Lemma~\ref{lem:descent-decomposition} follows the standard smoothness descent inequality used in nonconvex SGD analyses; see, e.g., \cite{ghadimi2013stochastic}. It reduces the analysis of the weighted stationarity measure to controlling the cumulative martingale term $\bar Y_t$. The next lemma constructs a nonnegative exponential supermartingale associated with $\bar Y_t$ under the conditional sub-Gaussian noise assumption. This construction serves as the key probabilistic ingredient for obtaining time-uniform control of the stochastic error accumulated along the SGD trajectory.

\begin{lemma}[Defining an exponential supermartingale]
\label{lem:exp-supermartingale}
Suppose Assumption~\ref{as:gradients} holds. Let $\bar Y_t$ be defined as in \eqref{eq:MT-def}, and let $\{\nu_t^2\}$ and $\{\Gamma_t^2\}$ be defined as in \eqref{eq:sigma-Sigma-def:nonconvex}. Then, for every $\lambda\ge 0$, the process
\[
    E_t(\lambda)
    :=
    \exp\!\left(
        \lambda \bar Y_t-\frac{\lambda^2}{2}\Gamma_t^2
    \right)\qquad\text{for all } t\ge 1,
\]
(with $\bar Y_0=\Gamma_0^2=0$) is a nonnegative supermartingale with respect to $\{\mathcal{F}_t\}$.
\end{lemma}
\begin{proof}
For notational convenience, define
\[
    Y_t := -\eta_t\langle \nabla f(x_t),\xi_t\rangle,
    \qquad\text{so that}\qquad
    \bar Y_t=\sum_{s=1}^t Y_s.
\]
Fix $t\ge 1$ and $\lambda\ge 0$. Since $\eta_t$ and $\nabla f(x_t)$ are $\mathcal{F}_{t-1}$-measurable, Assumption~\ref{as:gradients}(ii) applied with $u:=-\eta_t\nabla f(x_t)$ yields
\begin{align}
\label{eq:mgf-Yt}
    \mathbb{E}\!\left[\exp(\lambda Y_t)\,\big|\,\mathcal{F}_{t-1}\right]
    &=
    \mathbb{E}\!\left[
        \exp\!\big(\lambda\langle -\eta_t\nabla f(x_t),\xi_t\rangle\big)
        \,\Big|\,\mathcal{F}_{t-1}
    \right]\notag
    \\&\le
    \exp\!\left(
        \frac{\lambda^2\sigma^2\eta_t^2\|\nabla f(x_t)\|^2}{2}
    \right)
    =
    \exp\!\left(\frac{\lambda^2\nu_t^2}{2}\right),
\end{align}
where the last equality uses the definition of $\nu_t^2$ in \eqref{eq:sigma-Sigma-def:nonconvex}. Using $\bar Y_t=\bar Y_{t-1}+Y_t$ and $\Gamma_t^2=\Gamma_{t-1}^2+\nu_t^2$, we obtain
\begin{align*}
    \mathbb{E}\!\left[E_t(\lambda)\,\big|\,\mathcal{F}_{t-1}\right]
    &=
    \exp\!\left(
        \lambda \bar Y_{t-1}-\frac{\lambda^2}{2}\Gamma_{t-1}^2
    \right)
    \mathbb{E}\!\left[
        \exp\!\left(\lambda Y_t-\frac{\lambda^2}{2}\nu_t^2\right)
        \Bigm|\,\mathcal{F}_{t-1}
    \right]\\
    &=
    E_{t-1}(\lambda)\,
    \mathbb{E}\!\left[
        \exp\!\left(\lambda Y_t-\frac{\lambda^2}{2}\nu_t^2\right)
        \Bigm|\,\mathcal{F}_{t-1}
    \right]\\
    &\le
    E_{t-1}(\lambda),
\end{align*}
where the inequality follows from \eqref{eq:mgf-Yt}. Therefore, for each fixed $\lambda\ge 0$, $\{E_t(\lambda)\}_{t\ge 0}$ is a nonnegative supermartingale. Finally, $E_0(\lambda)=1$ since $\bar Y_0=\Gamma_0^2=0$.
\end{proof}

To obtain an explicit time-uniform bound from the exponential supermartingale in Lemma~\ref{lem:exp-supermartingale}, we apply the same mixture construction and Ville’s inequality used in the convex case. This yields a single high-probability event on which the cumulative noise term $\bar Y_t$ is simultaneously controlled for all times.

\begin{lemma}[Time-uniform bound for $\bar Y_t$]
\label{lem:mixture-bound}
Fix $\alpha\in(0,\tfrac{2}{\mathrm{e}})$. Under the conditions of Lemma~\ref{lem:exp-supermartingale} define
\[
    w_k := \frac{6}{\pi^2(k+1)^2},
    \qquad
    \lambda_k := \mathrm e^{-k/2}
    \qquad\text{for all } k\ge 0,
\]
and let
\[
    \bar{E}_t
    :=
    \sum_{k=0}^\infty w_k\,E_t(\lambda_k)
    \qquad\text{for all } t\ge 0,
\]
where $E_t(\lambda)$ is defined in Lemma~\ref{lem:exp-supermartingale}. Then
$\{\bar{E}_t\}_{t\ge 0}$ is a nonnegative supermartingale with $\bar{E}_0=1$, and the event
\[
    \mathcal{E}_\alpha
    :=
    \left\{\forall t\ge 1:\ \bar{E}_t < \frac{1}{\alpha}\right\}
\]
satisfies $\mathbb{P}(\mathcal{E}_\alpha)\ge 1-\alpha$. Moreover, on $\mathcal{E}_\alpha$, the following holds:
\begin{equation}
\label{eq:Mt-uniform-bound}
    \forall t\ge 1:\ \bar Y_t
    \le
    4\,\sqrt{\Gamma_{t,\mathrm{eff}}^2
    \Big(\log\tfrac{2}{\alpha}
    + \log\log(\mathrm e+\Gamma_{t,\mathrm{eff}}^2)\Big)},
\end{equation}
where $\Gamma_{t,\mathrm{eff}}^2$ is defined in \eqref{eq:sigma-Sigma-def:nonconvex}.
\end{lemma}
\begin{proof}[Proof sketch]
The argument mirrors Lemma~\ref{lem:cs-X-full}. The only change is the quadratic term in the exponential: here it is $(\lambda^2/2)\Gamma_t^2$ rather than $2\lambda^2\Sigma_t^2$, which replaces the envelope term $2\lambda_k v$ by $(\lambda_k/2)v$ in the analogue of $B_k(v)$. Tracking this change through Steps~(ii)--(v) yields the stated constant $4$.
\end{proof}

We are now ready to reconnect the preceding martingale control to the weighted average stationarity measure $\bar G_t$. On the high-probability event $\mathcal E_\alpha$ from Lemma~\ref{lem:mixture-bound}, the time-uniform bound \eqref{eq:Mt-uniform-bound} holds for all $t\ge 1$. Combining this with the smoothness-based decomposition from Lemma~\ref{lem:descent-decomposition} yields, for every $t\ge 1$,
\[
    \sum_{s=1}^t \eta_s \|\nabla f(x_s)\|^2
    \le
    f(x_1)
    +\frac{L}{2}\sum_{s=1}^t \eta_s^2 \|g_s\|^2
    +4\,\sqrt{\Gamma_{t,\mathrm{eff}}^2
    \Big(\log\tfrac{2}{\alpha}
    + \log\log(\mathrm e+\Gamma_{t,\mathrm{eff}}^2)\Big)}.
\]
Dividing both sides by $S_t=\sum_{s=1}^t \eta_s$ and recalling the definition $\bar G_t := \frac{1}{S_t}\sum_{s=1}^t \eta_s\|\nabla f(x_s)\|^2$ from~\eqref{eq:def-Gbar} gives the claimed anytime-valid upper confidence sequence. Since $\mathbb{P}(\mathcal{E}_\alpha)\ge 1-\alpha$, the bound holds simultaneously for all $t\ge 1$ with probability at least $1-\alpha$, completing the proof.

%--------------------------------------------------------------------------------
%--------------------------------------------------------------------------------
%--------------------------------------------------------------------------------

\section*{Acknowledgements}

Liviu Aolaritei acknowledges support from the Swiss National Science Foundation through the Postdoc.Mobility Fellowship (grant agreement P500PT\_222215). Funded in part by the European Union (ERC-2022-SYG-OCEAN-101071601). Views and opinions expressed are however those of the author(s) only and do not necessarily reflect those of the European Union or the European Research Council Executive Agency. Neither the European Union nor the granting authority can be held responsible for them.

\bibliographystyle{abbrvnat} 
\bibliography{bibfile.bib}

%--------------------------------------------------------------------------------
%--------------------------------------------------------------------------------
%--------------------------------------------------------------------------------

\appendix

\section{Additional proofs}
\label{sec:additional-proofs}

%--------------------------------------------------------------------------------

\subsection{Proof of Proposition~\ref{prop:Ut-SA}}
Write the observable bound from Corollary~\ref{cor:observable-CS} as
\[
    U_t^{\mathrm{obs}}(\alpha)
    \;=\;
    \frac{1}{2S_t}\Big(
        A_t(\alpha) \;+\; R_x^2 \;+\; B_t
    \Big),
\]
where
\[
    A_t(\alpha)
    :=
    7\sqrt{\widetilde{\Sigma}_{t,\mathrm{eff}}^2\Big(\log\tfrac{2}{\alpha}
    +\log\log(\mathrm e+\widetilde{\Sigma}_{t,\mathrm{eff}}^2)\Big)},
    \qquad
    B_t := \sum_{s=1}^t \eta_s^2\|g_s\|^2 .
\]

\medskip
\noindent\emph{Almost sure rate.}
Under the stochastic approximation stepsize, we have $V_t=\sum_{s=1}^t \eta_s^2$ bounded and therefore
\[
    \sup_{t\ge 1}\widetilde{\Sigma}_t^2
    = \sigma^2 R_x^2 \sup_{t\ge 1} V_t
    < \infty,
\]
which implies that $\widetilde{\Sigma}_{t,\mathrm{eff}}^2$ (and thus $A_t(\alpha)$) is uniformly bounded in $t$. Moreover, by Lemma~\ref{lem:sum-eta2-g2-finite}, $B_t$ converges almost surely to a finite limit. Therefore the numerator $A_t(\alpha)+R_x^2+B_t$ is almost surely bounded, and since $S_t\to\infty$, we obtain $U_t^{\mathrm{obs}}(\alpha)=O(1/S_t)$ almost surely, and thus $U_t^{\mathrm{obs}}(\alpha)\to 0$.

\medskip
\noindent\emph{Expected rate.}
The same uniform boundedness gives $\sup_t A_t(\alpha)<\infty$. Furthermore, by the tower property and Assumption~\ref{as:gradients:moment},
\[
    \mathbb{E}[B_t]
    =
    \sum_{s=1}^t \eta_s^2 \mathbb{E}[\|g_s\|^2]
    \le
    \sum_{s=1}^t \eta_s^2 \,\mathbb{E}\!\big[\mathbb{E}[\|g_s\|^2\mid\mathcal F_{s-1}]\big]
    \le
    M^2 \sum_{s=1}^\infty \eta_s^2
    <\infty.
\]
Hence there exists a constant $C<\infty$ such that $\mathbb{E}[A_t(\alpha)+R_x^2+B_t]\le C$ for all $t$, and
\[
    \mathbb{E}[U_t^{\mathrm{obs}}(\alpha)]
    \le
    \frac{C}{2S_t}
    =
    O(1/S_t),
\]
which concludes the proof.

%--------------------------------------------------------------------------------

\subsection{Proof of Proposition~\ref{prop:Ut-poly}}
Write the observable bound from Corollary~\ref{cor:observable-CS} as
\[
    U_t^{\mathrm{obs}}(\alpha)
    \;=\;
    \frac{1}{2S_t}\Big(
        A_t(\alpha) \;+\; R_x^2 \;+\; B_t
    \Big),
\]
where
\[
    A_t(\alpha)
    :=
    7\sqrt{\widetilde{\Sigma}_{t,\mathrm{eff}}^2\Big(\log\tfrac{2}{\alpha}
    +\log\log(\mathrm e+\widetilde{\Sigma}_{t,\mathrm{eff}}^2)\Big)},
    \qquad
    B_t := \sum_{s=1}^t \eta_s^2\|g_s\|^2 .
\]

\medskip
\noindent\emph{Expected rate.}
Since $x\mapsto x^{-1/2}$ is nonincreasing, for $\eta_s=\eta_0 s^{-1/2}$ we have
$s^{-1/2}\ge t^{-1/2}$ for all $1\le s\le t$, hence
\begin{equation}\label{eq:St-lower}
    S_t=\sum_{s=1}^t \eta_s
    =\eta_0\sum_{s=1}^t s^{-1/2}
    \ge
    \eta_0\sum_{s=1}^t t^{-1/2}
    =
    \eta_0\sqrt{t}.
\end{equation}
Moreover, by the integral comparison for $x\mapsto 1/x$,
\begin{equation}\label{eq:Vt-upper}
    V_t=\sum_{s=1}^t \eta_s^2
    =\eta_0^2\sum_{s=1}^t \frac{1}{s}
    \le
    \eta_0^2\Big(1+\log t\Big).
\end{equation}
Recalling $\widetilde{\Sigma}_{t,\mathrm{eff}}^2 =\max\!\{\sigma^2R_x^2V_t,\;2(\log\tfrac{2}{\alpha}+1)\}$ and using $\max\{a,b\}\le a+b$ for $a,b\ge0$, we obtain
\begin{equation}\label{eq:Sigmaeff-upper}
    \widetilde{\Sigma}_{t,\mathrm{eff}}^2
    \le
    \sigma^2R_x^2V_t + 2\Big(\log\tfrac{2}{\alpha}+1\Big)
    \le
    C_1\bigl(1+\log t\bigr),
\end{equation}
where $C_1<\infty$ depends only on $(\alpha,\sigma,R_x,\eta_0)$. We now bound $A_t(\alpha)$. For any $u\ge0$, $\log\log(\mathrm e+u)\le \log(\mathrm e+u)$
since $\log(\mathrm e+u)\ge 1$. Therefore, for all $t\ge1$,
\[
    A_t(\alpha)
    \le
    7\sqrt{\widetilde{\Sigma}_{t,\mathrm{eff}}^2\Big(\log\tfrac{2}{\alpha}
    +\log(\mathrm e+\widetilde{\Sigma}_{t,\mathrm{eff}}^2)\Big)}.
\]
For $t\ge e$, \eqref{eq:Sigmaeff-upper} implies
\[
    \mathrm e+\widetilde{\Sigma}_{t,\mathrm{eff}}^2
    \le
    \mathrm e + C_1(1+\log t)
    \le
    C_2\log t,
\]
with $C_2:=\mathrm e+2C_1$, and hence $\log(\mathrm e+\widetilde{\Sigma}_{t,\mathrm{eff}}^2)\le \log C_2+\log\log t\le \log C_2+\log t$.
Combining this with \eqref{eq:Sigmaeff-upper} yields that for all $t\ge e$,
\[
    A_t(\alpha)
    \le
    7\sqrt{C_1(1+\log t)\Big(\log\tfrac{2}{\alpha}+\log C_2+\log t\Big)}
    \le
    C_3(1+\log t),
\]
for some constant $C_3<\infty$ depending only on $(\alpha,\sigma,R_x,\eta_0)$.

Next, by the tower property and Assumption~\ref{as:gradients:moment},
\[
    \mathbb{E}[B_t]
    =
    \sum_{s=1}^t \eta_s^2\,\mathbb{E}[\|g_s\|^2]
    \le
    \sum_{s=1}^t \eta_s^2\,\mathbb{E}\!\big[\mathbb{E}[\|g_s\|^2\mid\mathcal F_{s-1}]\big]
    \le
    M^2\sum_{s=1}^t \eta_s^2
    =
    M^2V_t
    \le
    M^2\eta_0^2(1+\log t),
\]
where the last inequality uses \eqref{eq:Vt-upper}.

Taking expectations in the decomposition of $U_t^{\mathrm{obs}}(\alpha)$ and using the bounds above, for all $t\ge e$,
\[
    \mathbb{E}\!\left[U_t^{\mathrm{obs}}(\alpha)\right]
    =
    \frac{1}{2S_t}\Big(A_t(\alpha)+R_x^2+\mathbb{E}[B_t]\Big)
    \le
    \frac{1}{2\eta_0\sqrt{t}}
    \Big(C_3(1+\log t)+R_x^2+M^2\eta_0^2(1+\log t)\Big),
\]
where we used \eqref{eq:St-lower}. Absorbing constants, this yields
\[
    \mathbb{E}\!\left[U_t^{\mathrm{obs}}(\alpha)\right]
    \le
    \frac{C_4(1+\log t)}{\sqrt{t}}
    =
    O\left(\frac{\log t}{\sqrt{t}}\right),
\]
as claimed.

\medskip
\noindent\emph{Almost sure rate.}
We reuse \eqref{eq:St-lower}, \eqref{eq:Vt-upper}, and the bound $A_t(\alpha)\le C_3(1+\log t)$ proved in part \textup{(i)}. Thus it suffices to show $B_t=O(\log t)$ almost surely. Using $\|a+b\|^2\le 2\|a\|^2+2\|b\|^2$ and $\|\nabla f(x_s)\|\le G$ almost surely,
\[
    B_t=\sum_{s=1}^t \eta_s^2\|g_s\|^2
    \le 2G^2V_t + 2\sum_{s=1}^t \eta_s^2\|\xi_s\|^2.
\]
Since $V_t=O(\log t)$ deterministically, it remains to control $\sum_{s=1}^t\eta_s^2\|\xi_s\|^2$. Define the martingale differences
\[
    \Delta_s:=\eta_s^2\Big(\|\xi_s\|^2-\mathbb E[\|\xi_s\|^2\mid\mathcal F_{s-1}]\Big),
    \qquad
    M_t:=\sum_{s=1}^t \Delta_s.
\]
Then $\{M_t\}$ is an $\{\mathcal F_t\}$-martingale. Moreover,
\[
    \mathbb E[\Delta_s^2\mid\mathcal F_{s-1}]
    =\eta_s^4\,\mathbb E\!\left[\Big(\|\xi_s\|^2-\mathbb E[\|\xi_s\|^2\mid\mathcal F_{s-1}]\Big)^2\Bigm|\mathcal F_{s-1}\right]
    =\eta_s^4\,\mathrm{Var}\!\big(\|\xi_s\|^2\mid\mathcal F_{s-1}\big).
\]
Since $\mathrm{Var}(Y\mid\mathcal F)\le \mathbb E[Y^2\mid\mathcal F]$ for any $Y$, it follows that
\[
    \mathbb E[\Delta_s^2\mid\mathcal F_{s-1}]
    \le \eta_s^4\,\mathbb E[\|\xi_s\|^4\mid\mathcal F_{s-1}]
    \le 16d^2\sigma^4\,\eta_s^4
    \qquad\text{almost surely},
\]
where the last inequality follows from Lemma~\ref{lem:subg-moments}. Therefore,
\[
    \sup_{t\ge 1}\mathbb E[M_t^2]
    =
    \sup_{t\ge 1}\sum_{s=1}^t \mathbb E[\Delta_s^2]
    \le 16d^2\sigma^4\sum_{s=1}^\infty \eta_s^4
    <\infty,
\]
since $\eta_s=\eta_0\, s^{-1/2}$ implies $\sum_s\eta_s^4<\infty$. Hence $\{M_t\}$ is $L^2$-bounded. By the martingale convergence theorem, there exists $M_\infty\in L^2$ such that $M_t\to M_\infty$ almost surely; in particular, $\sup_{t\ge1}|M_t|<\infty$ almost surely. Finally,
\[
    \sum_{s=1}^t \eta_s^2\|\xi_s\|^2
    =
    \sum_{s=1}^t \eta_s^2\,\mathbb E[\|\xi_s\|^2\mid\mathcal F_{s-1}]
    + M_t
    \le
    4d\sigma^2 V_t + O(1)
    =
    O(\log t)
    \qquad\text{almost surely},
\]
again by Lemma~\ref{lem:subg-moments} and $V_t=O(\log t)$. Thus $B_t=O(\log t)$ almost surely, and plugging into the decomposition of
$U_t^{\mathrm{obs}}(\alpha)$ together with $S_t\ge \eta_0\sqrt t$ and $A_t(\alpha)=O(\log t)$ (from part \textup{(i)}) yields
\[
    U_t^{\mathrm{obs}}(\alpha)=O\left(\frac{\log t}{\sqrt t}\right)
    \qquad\text{almost surely.}
\]

%--------------------------------------------------------------------------------

\subsection{Proof of Theorem~\ref{thm:stopping-certified-main}}
Let
\[
    \mathcal E_\alpha
    :=
    \Bigl\{
        \forall t\ge 1:\ f(\bar x_t)-f(x^\star) \le U_t^{\mathrm{obs}}(\alpha)
    \Bigr\}.
\]
By \eqref{eq:cs-stopping-recall}, $\mathbb P(\mathcal E_\alpha^{\mathrm c})\le \alpha$. Fix $m\ge 1$. On $\mathcal E_\alpha\cap\{\tau_\varepsilon=m\}$,
\[
    f(\bar x_m)-f(x^\star) \le U_m^{\mathrm{obs}}(\alpha)\le \varepsilon,
\]
where the first inequality is the definition of $\mathcal E_\alpha$ at $t=m$ and the second follows from $\tau_\varepsilon=m$. Hence
\[
    \{\tau_\varepsilon=m\}\cap\{f(\bar x_m)-f(x^\star)>\varepsilon\}
    \subseteq \mathcal E_\alpha^{\mathrm c}.
\]
Taking the union over $m$ and using $\{\tau_\varepsilon<\infty\}=\bigcup_{m\ge 1}\{\tau_\varepsilon=m\}$ yields
\[
    \{\tau_\varepsilon<\infty\}\cap\{f(\bar x_{\tau_\varepsilon})-f(x^\star)>\varepsilon\}
    \subseteq \mathcal E_\alpha^{\mathrm c}.
\]
Therefore,
\[
\mathbb{P}\!\left(
    \{\tau_\varepsilon<\infty\}\cap\{f(\bar x_{\tau_\varepsilon})-f(x^\star) > \varepsilon\}
\right)
\le \mathbb P(\mathcal E_\alpha^{\mathrm c})
\le \alpha.
\]

%--------------------------------------------------------------------------------

\subsection{Proof of Proposition~\ref{prop:stopping-certified}}
We first establish almost sure finiteness of the stopping time $\tau_\varepsilon$.

\medskip
\noindent\emph{Almost sure termination.}
Under the stochastic approximation conditions~\eqref{eq:stepsizes:SA} and Assumption~\ref{as:gradients:moment}, Proposition~\ref{prop:Ut-SA} shows that
\[
    U_t^{\mathrm{obs}}(\alpha)\;\to\;0
    \qquad\text{almost surely as } t\to\infty.
\]
Define the event
\[
    \Omega_0
    :=
    \Big\{
        \lim_{t\to\infty} U_t^{\mathrm{obs}}(\alpha) = 0
    \Big\},
\]
so that $\mathbb{P}(\Omega_0)=1$. Fix $\varepsilon>0$ and $\omega\in\Omega_0$. By definition of almost sure convergence, there exists a (random, finite) index $T_\varepsilon(\omega)$ such that
\[
    U_t^{\mathrm{obs}}(\alpha)(\omega) \le \varepsilon
    \qquad \forall\, t\ge T_\varepsilon(\omega).
\]
By the definition~\eqref{eq:tau-def-stopping} of $\tau_\varepsilon$,
\[
    \tau_\varepsilon(\omega)
    =
    \inf\bigl\{ t\ge 1 :
    U_t^{\mathrm{obs}}(\alpha)(\omega)\le\varepsilon \bigr\}
    \;\le\;
    T_\varepsilon(\omega)
    \;<\;
    \infty.
\]
Since this holds for every $\omega\in\Omega_0$ and $\mathbb{P}(\Omega_0)=1$, we conclude that $\tau_\varepsilon<\infty$ almost surely.

Under polynomial stepsizes $\eta_t=\eta_0\,t^{-1/2}$ with uniformly bounded gradients, Proposition~\ref{prop:Ut-poly} likewise yields $U_t^{\mathrm{obs}}(\alpha)\to0$ almost surely, and the same argument applies.

\medskip
\noindent\emph{Validity at the stopping time.}
Define the event
\[
    \mathcal{E}_\alpha
    :=
    \Big\{
        \forall t\ge 1:\ 
        f(\bar x_t)-f(x^\star)\le U_t^{\mathrm{obs}}(\alpha)
    \Big\}.
\]
By~\eqref{eq:cs-stopping-recall}, $\mathbb{P}(\mathcal{E}_\alpha)\ge 1-\alpha$. On the event $\mathcal{E}_\alpha$, the inequality $f(\bar x_t)-f(x^\star)\le U_t^{\mathrm{obs}}(\alpha)$ holds for all $t\ge1$, and in particular at the (random) time $\tau_\varepsilon$. Hence, on $\mathcal{E}_\alpha$,
\[
    f(\bar x_{\tau_\varepsilon})-f(x^\star)
    \;\le\;
    U_{\tau_\varepsilon}^{\mathrm{obs}}(\alpha).
\]
By definition of $\tau_\varepsilon$, we have $U_{\tau_\varepsilon}^{\mathrm{obs}}(\alpha)\le\varepsilon$, and therefore
\[
    f(\bar x_{\tau_\varepsilon})-f(x^\star)\le\varepsilon
    \qquad \text{on }\mathcal{E}_\alpha.
\]
Thus $\mathcal{E}_\alpha\subseteq\{f(\bar x_{\tau_\varepsilon}) -f(x^\star)\le\varepsilon\}$ and taking probabilities yields
\[
    \mathbb{P}\Big(
        f(\bar x_{\tau_\varepsilon})-f(x^\star)\le\varepsilon
    \Big)
    \;\ge\;
    \mathbb{P}(\mathcal{E}_\alpha)
    \;\ge\;
    1-\alpha.
\]
This establishes~\eqref{eq:stopped-eps-opt} and completes the proof.

%--------------------------------------------------------------------------------

\subsection{Proof of Lemma~\ref{lem:Etau-from-envelope}}
Fix an outcome $\omega$ in the almost sure event on which \textup{(ii)} holds for all $t\ge1$. If $t\ge T(\varepsilon/K(\omega))$, then by definition of $T$ and the monotonicity of $b$ we have $b(t)\le \varepsilon/K(\omega)$, and thus $U_t^{\mathrm{obs}}(\alpha)(\omega)\le K(\omega)b(t)\le\varepsilon$. By definition of $\tau_\varepsilon$ as the first time the certificate falls below $\varepsilon$, this implies $\tau_\varepsilon(\omega)\le T(\varepsilon/K(\omega))$. Taking expectations yields the claim.

%--------------------------------------------------------------------------------

\subsection{Proof of Theorem~\ref{thm:Etau-sqrt}}
We bound the stopping time by constructing an almost sure envelope of the form $U_t^{\mathrm{obs}}(\alpha)\le K_1\,b(t)$ with $b(t)\asymp (1+\log t)/\sqrt t$ and a random prefactor $K_1$ having sufficiently high moments, then invoke Lemma~\ref{lem:Etau-from-envelope}. Recall the decomposition (as in the proof of Proposition~\ref{prop:Ut-poly})
\[
    U_t^{\mathrm{obs}}(\alpha)
    =
    \frac{1}{2S_t}\Big(A_t(\alpha)+R_x^2+B_t\Big),
    \qquad
    B_t=\sum_{s=1}^t \eta_s^2\|g_s\|^2,
\]
and the definitions of $A_t(\alpha)$ and $\widetilde{\Sigma}_{t,\mathrm{eff}}^2$ used there. For the square-root schedule $\eta_s=\eta_0\,s^{-1/2}$, the deterministic estimates \eqref{eq:St-lower} and \eqref{eq:Vt-upper} from Proposition~\ref{prop:Ut-poly} yield
\begin{equation}
\label{eq:St-Vt-sqrt-recall}
    S_t \ge \eta_0\sqrt t,
    \qquad
    V_t \le \eta_0^2(1+\log t).
\end{equation}
Moreover, the bound on the time-uniform term $A_t(\alpha)$ established in the same proof implies that there exists $C_A<\infty$ such that
\begin{equation}
\label{eq:A-sqrt-recall}
    A_t(\alpha)\le C_A(1+\log t)
    \qquad\text{for all } t\ge 1.
\end{equation}
We next control $B_t$ pathwise. Writing $g_s=\nabla f(x_s)+\xi_s$ and using $\|a+b\|^2\le 2\|a\|^2+2\|b\|^2$ together with Assumption~\ref{as:bounded-grad}, we obtain
\[
    B_t
    =
    \sum_{s=1}^t \eta_s^2\|g_s\|^2
    \le
    2\sum_{s=1}^t \eta_s^2\|\nabla f(x_s)\|^2
    +
    2\sum_{s=1}^t \eta_s^2\|\xi_s\|^2
    \le
    2G^2V_t + 2\sum_{s=1}^t \eta_s^2\|\xi_s\|^2.
\]
Decompose the weighted noise-square sum into its predictable compensator plus a martingale:
\[
    \sum_{s=1}^t \eta_s^2\|\xi_s\|^2
    =
    \sum_{s=1}^t \eta_s^2\,\mathbb{E}[\|\xi_s\|^2\mid\mathcal{F}_{s-1}]
    + M_t,
    \qquad
    M_t:=\sum_{s=1}^t \Delta_s,
\]
where $\Delta_s:=\eta_s^2(\|\xi_s\|^2-\mathbb{E}[\|\xi_s\|^2\mid \mathcal{F}_{s-1}])$. By Lemma~\ref{lem:subg-moments}, $\mathbb{E}[\|\xi_s\|^2\mid\mathcal{F}_{s-1}]\le 4d\sigma^2$ almost surely, hence
\[
    \sum_{s=1}^t \eta_s^2\,\mathbb{E}[\|\xi_s\|^2\mid\mathcal{F}_{s-1}]
    \le
    4d\sigma^2 V_t
    \le
    4d\sigma^2\eta_0^2(1+\log t).
\]
Combining the last two displays and using $V_t\le \eta_0^2(1+\log t)$ again gives that there exists $C_B<\infty$ (depending only on $(G,\sigma,d,\eta_0)$) such that
\begin{equation}\label{eq:Bt-envelope}
    B_t \le C_B(1+\log t) + 2\sup_{s\ge1}|M_s|.
\end{equation}

We now introduce the random variable $K:=1+\sup_{s\ge1}|M_s|$. By Lemma~\ref{lem:Mt-maximal}, $\mathbb{E}[K^4]<\infty$, and in particular $K<\infty$ almost surely. Using
\eqref{eq:A-sqrt-recall}, \eqref{eq:Bt-envelope}, and $S_t\ge \eta_0\sqrt t$, we obtain that there exists $C_0<\infty$ depending only on $(\alpha,\eta_0,R_x,G,\sigma,d)$ such that for all $t\ge 1$,
\begin{equation}
\label{eq:U-envelope-final}
    U_t^{\mathrm{obs}}(\alpha)
    \le
    C_0\,K\,\frac{1+\log t}{\sqrt t}
    \qquad\text{almost surely.}
\end{equation}

Define $b(t):=(1+\log t)/\sqrt t$ for $t\ge 1$. Then $b(t)\to 0$ as $t\to\infty$, and $b$ is nonincreasing for all sufficiently large $t$. Recall the definition $T(u):=\inf\{t\ge1:\ b(t)\le u\}$. As shown in Lemma~\ref{lem:invert-b}, there exists a universal constant $C_1<\infty$ such that for all $u\in(0,1/2)$,
\begin{equation}\label{eq:T-inv-again}
    T(u)\le C_1\,u^{-2}\,\log^2\!\frac{1}{u}.
\end{equation}
Set $K_1:=C_0 K \ge 1$. From \eqref{eq:U-envelope-final} we have $U_t^{\mathrm{obs}}(\alpha)\le K_1\,b(t)$ almost surely, and therefore
Lemma~\ref{lem:Etau-from-envelope} yields
\[
    \mathbb{E}[\tau_\varepsilon]
    \le
    \mathbb{E}\!\left[T\!\left(\frac{\varepsilon}{K_1}\right)\right].
\]
Using \eqref{eq:T-inv-again} and $\varepsilon\in(0,1/2)$ gives
\[
    \mathbb{E}[\tau_\varepsilon]
    \le
    C_1\,\varepsilon^{-2}\,
    \mathbb{E}\!\left[
        K_1^{2}\,
        \log^2\!\left(\frac{K_1}{\varepsilon}\right)
    \right].
\]
To bound the expectation uniformly in $\varepsilon$, note that $\log(K_1/\varepsilon) \le \log(1/\varepsilon)+\log K_1$, so
\[
    K_1^{2}\log^2\!\left(\frac{K_1}{\varepsilon}\right)
    \le
    2K_1^2 \log^2\!\frac{1}{\varepsilon}
    +
    2K_1^2 \log^2(K_1).
\]
The first term has expectation $2\log^2(1/\varepsilon)\,\mathbb E[K_1^2]<\infty$. 

For the second term, fix any $\delta\in(0,1)$. Since $x^\delta$ dominates $\log^2 x$ as $x\to\infty$ and $x\mapsto \log^2 x / x^\delta$ is continuous on $[1,\infty)$, there exists $C_\delta<\infty$ such that $\log^2 x\le C_\delta x^\delta$ for all $x\ge 1$. Because $K_1\ge 1$ almost surely, this yields $K_1^2\log^2(K_1)\le C_\delta K_1^{2+\delta}$ almost surely. Because $\mathbb E[K_1^{2+\delta}]<\infty$ (which holds since $\mathbb E[K^4]<\infty$ and $\delta\le 2$), it follows that
$\mathbb E[K_1^2\log^2(K_1)]<\infty$. Therefore there exists a constant $C_2<\infty$, independent of $\varepsilon$, such that
\[
    \mathbb{E}\!\left[
        K_1^{2}\,
        \log^2\!\left(\frac{K_1}{\varepsilon}\right)
    \right]
    \le
    C_2\,\log^2\!\frac{1}{\varepsilon}.
\]
Combining the last two displays yields
\[
    \mathbb{E}[\tau_\varepsilon]
    \le
    C\,\varepsilon^{-2}\,\log^2\!\Big(\frac{1}{\varepsilon}\Big),
\]
after absorbing constants into $C$, which concludes the proof.

%--------------------------------------------------------------------------------
%--------------------------------------------------------------------------------
%--------------------------------------------------------------------------------

\section{Supporting lemmas}
\label{sec:supporting:proofs}

\begin{lemma}
\label{lem:sum-eta2-g2-finite}
Let Assumption~\ref{as:gradients:moment} hold and assume that $\sum_{t=1}^\infty \eta_t^2 < \infty$. Then,
\[
    \sum_{t=1}^\infty \eta_t^2 \|g_t\|^2 < \infty
    \qquad \text{almost surely}.
\]
\end{lemma}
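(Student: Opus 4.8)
The plan is to exploit the predictability of the stepsizes to reduce the claim to the almost-sure summability of $\{\eta_t^2\}$ furnished by Assumption~\ref{as:stepsizes}, using a conditional-summability (martingale-convergence) argument for nonnegative adapted sequences. Set $Y_t := \eta_t^2\|g_t\|^2 \ge 0$, which is $\mathcal{F}_t$-adapted. Since $\eta_t$ is $\mathcal{F}_{t-1}$-measurable (predictable), the conditional expectation factorizes, and Assumption~\ref{as:gradients}(ii) gives
\[
    \mathbb{E}[Y_t \mid \mathcal{F}_{t-1}]
    = \eta_t^2\,\mathbb{E}[\|g_t\|^2 \mid \mathcal{F}_{t-1}]
    \le G^2\,\eta_t^2
    \qquad\text{almost surely.}
\]
Summing over $t$ and invoking $\sum_{t\ge 1}\eta_t^2 < \infty$ from Assumption~\ref{as:stepsizes} yields the a.s.\ bound $\sum_{t\ge 1}\mathbb{E}[Y_t \mid \mathcal{F}_{t-1}] \le G^2\sum_{t\ge 1}\eta_t^2 < \infty$. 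Thus the predictable compensator of the increasing process $\sum_{s\le t} Y_s$ has a finite limit almost surely.

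The remaining step is the standard fact that, for a nonnegative adapted sequence, almost-sure finiteness of the summed conditional expectations forces almost-sure finiteness of the sum itself. I would establish this by localization. Define the predictable increasing process $B_t := \sum_{s=1}^t \mathbb{E}[Y_s \mid \mathcal{F}_{s-1}]$ (with $B_0 := 0$) and the stopping time $\tau_K := \inf\{t\ge 0 : B_{t+1} > K\}$; since $B_t$ is $\mathcal{F}_{t-1}$-measurable, $\{\tau_K \ge t\} = \{B_t \le K\}\in\mathcal{F}_{t-1}$, so $\tau_K$ is a stopping time and $\mathbf{1}_{\{s\le\tau_K\}}$ is predictable. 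Using this predictability,
\[
    \mathbb{E}\!\left[\sum_{s=1}^{t\wedge\tau_K} Y_s\right]
    = \mathbb{E}\!\left[\sum_{s=1}^{t\wedge\tau_K}\mathbb{E}[Y_s\mid\mathcal{F}_{s-1}]\right]
    = \mathbb{E}[B_{t\wedge\tau_K}] \le K,
\]
because $B_{t\wedge\tau_K}\le K$ by construction. Letting $t\to\infty$ and applying monotone convergence gives $\sum_{s=1}^{\tau_K} Y_s < \infty$ almost surely, hence $\sum_{s\ge 1} Y_s < \infty$ on the event $\{\tau_K = \infty\} = \{B_\infty \le K\}$. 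Since $B_\infty < \infty$ almost surely, the events $\{B_\infty \le K\}$ increase to a set of full probability as $K\to\infty$, and the conclusion follows.

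The main subtlety, and the reason I would not simply take a total expectation and appeal to Tonelli, is that Assumption~\ref{as:stepsizes} only guarantees $\sum_t\eta_t^2 < \infty$ almost surely; for genuinely random predictable stepsizes the compensator $B_\infty$ may fail to be integrable, so $\mathbb{E}\big[\sum_t Y_t\big]$ could be infinite even though the sum is a.s.\ finite. The localization via $\tau_K$ is precisely what circumvents this, reducing everything to uniformly bounded stopped compensators. (If one restricts to the deterministic schedules of Section~\ref{subsec:stopping-time-complexity}, the compensator is a deterministic finite constant and the Tonelli argument suffices directly, but the localization argument covers the general predictable case at no extra cost.)
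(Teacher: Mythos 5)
Your proof is correct, and it takes a genuinely different route from the paper's. The paper argues in unconditional expectation: it sets $Y_n := \sum_{t=1}^n \eta_t^2\|g_t\|^2$, applies monotone convergence to get $\mathbb{E}[Y] = \lim_n \mathbb{E}[Y_n]$, writes $\mathbb{E}[Y_n] = \sum_{t=1}^n \eta_t^2\,\mathbb{E}[\|g_t\|^2] \le G^2 \sum_{t=1}^n \eta_t^2$ via the tower property and Assumption~\ref{as:gradients}(ii), and concludes $\mathbb{E}[Y]<\infty$, hence $Y<\infty$ almost surely --- exactly the Tonelli-style argument you deliberately avoided. Your subtlety flag is well taken: the paper's step that pulls $\eta_t^2$ outside the unconditional expectation is only legitimate when the stepsizes are deterministic (or, after conditioning first, when $\mathbb{E}\bigl[\sum_t \eta_t^2\bigr]<\infty$), whereas Assumption~\ref{as:stepsizes} permits predictable, data-dependent stepsizes for which only the almost-sure summability $\sum_t \eta_t^2<\infty$ is guaranteed, and the paper elsewhere explicitly advertises validity under data-dependent stepsize choices. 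Your argument --- bounding the predictable compensator $B_\infty \le G^2\sum_t\eta_t^2 < \infty$ a.s.\ and then localizing with the stopping times $\tau_K$ so that the stopped compensator is uniformly bounded by $K$ --- is the standard conditional Borel--Cantelli/compensator device, and all the details check out: $\{\tau_K \ge t\}=\{B_t\le K\}\in\mathcal{F}_{t-1}$ makes the indicator predictable, $B_{t\wedge\tau_K}\le K$ holds by construction since $B$ is nondecreasing, and the events $\{B_\infty\le K\}$ exhaust a full-probability set. In short, the paper's proof buys brevity at the cost of an implicit determinism assumption on $\{\eta_t\}$; yours buys full generality for the predictable case at the cost of the localization machinery, and is arguably the proof the paper's stated assumptions actually require.
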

\begin{proof}
Let $Y_n:=\sum_{t=1}^n \eta_t^2\|g_t\|^2$ and $Y:=\lim_{n\to\infty}Y_n\in[0,\infty]$.
By monotone convergence,
\[
    \mathbb{E}[Y]=\lim_{n\to\infty}\mathbb{E}[Y_n]
    =\lim_{n\to\infty}\sum_{t=1}^n \eta_t^2\,\mathbb{E}[\|g_t\|^2].
\]
By the tower property and Assumption~\ref{as:gradients:moment},
$\mathbb{E}[\|g_t\|^2]=\mathbb{E}[\mathbb{E}[\|g_t\|^2\mid\mathcal F_{t-1}]]\le M^2$.
Hence
\[
    \mathbb{E}[Y]\le M^2\sum_{t=1}^\infty \eta_t^2<\infty.
\]
Since $Y\ge0$, $\mathbb{E}[Y]<\infty$ implies $\mathbb{P}(Y<\infty)=1$.
\end{proof}

\begin{lemma}
\label{lem:subg-moments}
Suppose Assumption~\ref{as:gradients} holds. Then, for every $t\ge 1$,\footnote{The constants in this lemma are not optimized. Under \eqref{eq:cond-subg-noise}, a sharper moment computation yields $\mathbb E[\|\xi_t\|^2\mid\mathcal F_{t-1}]\le d\sigma^2$, $\mathbb E[\|\xi_t\|^4\mid\mathcal F_{t-1}]\le 3d^2\sigma^4$, and $\mathbb E[\|\xi_t\|^8\mid\mathcal F_{t-1}]\le 105\,d^4\sigma^8$. We keep the present bounds for simplicity.}
\[
    \mathbb E[\|\xi_t\|^2\mid\mathcal F_{t-1}] \le 4 d\sigma^2,
    \quad
    \mathbb E[\|\xi_t\|^4\mid\mathcal F_{t-1}] \le 16d^2\sigma^4,
    \quad
    \mathbb E[\|\xi_t\|^8\mid\mathcal F_{t-1}] \le 768\,d^4\sigma^8
    \qquad\text{almost surely.}
\]
\end{lemma}
\begin{proof}
Fix $t\ge1$ and $i\in\{1,\dots,d\}$. Taking $u=e_i$ in \eqref{eq:cond-subg-noise} yields
\[
    \mathbb E\!\left[e^{\lambda\xi_{t,i}}\mid\mathcal F_{t-1}\right]
    \le \exp\!\left(\frac{\lambda^2\sigma^2}{2}\right)
    \qquad \forall \lambda\in\mathbb R.
\]
By a standard Chernoff bound (see \cite[Section~2.3]{boucheron2013concentration}), this implies
\[
    \mathbb P(\xi_{t,i}>x\mid\mathcal F_{t-1})
    \ \vee\
    \mathbb P(-\xi_{t,i}>x\mid\mathcal F_{t-1})
    \le
    \exp\!\left(-\frac{x^2}{2\sigma^2}\right),
    \qquad \forall x>0.
\]
Thus, conditional on $\mathcal F_{t-1}$, $\xi_{t,i}$ satisfies the assumption in \cite[Theorem~2.1]{boucheron2013concentration} with $v=\sigma^2$. Applying that theorem with $q=1$, $q=2$, and $q=4$ yields
\[
    \mathbb E[\xi_{t,i}^2\mid\mathcal F_{t-1}] \le 4\sigma^2,
    \qquad
    \mathbb E[\xi_{t,i}^4\mid\mathcal F_{t-1}] \le 16\sigma^4,
    \qquad
    \mathbb E[\xi_{t,i}^8\mid\mathcal F_{t-1}] \le 768\,\sigma^8,
\]
almost surely. Summing the $q=1$ inequality over $i$ gives
\[
    \mathbb E[\|\xi_t\|^2\mid\mathcal F_{t-1}]
    =
    \sum_{i=1}^d \mathbb E[\xi_{t,i}^2\mid\mathcal F_{t-1}]
    \le
    4d\sigma^2.
\]
Next, using $(\sum_{i=1}^d a_i)^2\le d\sum_{i=1}^d a_i^2$ with $a_i=\xi_{t,i}^2 \ge 0$, 
\[
    \|\xi_t\|^4
    =
    \Big(\sum_{i=1}^d \xi_{t,i}^2\Big)^2
    \le
    d\sum_{i=1}^d \xi_{t,i}^4,
\]
and hence
\[
    \mathbb E[\|\xi_t\|^4\mid\mathcal F_{t-1}]
    \le
    d\sum_{i=1}^d \mathbb E[\xi_{t,i}^4\mid\mathcal F_{t-1}]
    \le
    16d^2\sigma^4.
\]
Finally, using $(\sum_{i=1}^d a_i)^4\le d^3\sum_{i=1}^d a_i^4$ with $a_i=\xi_{t,i}^2 \ge 0$,
\[
    \|\xi_t\|^8
    =
    \Big(\sum_{i=1}^d \xi_{t,i}^2\Big)^4
    \le
    d^3\sum_{i=1}^d \xi_{t,i}^8,
\]
and therefore
\[
    \mathbb E[\|\xi_t\|^8\mid\mathcal F_{t-1}]
    \le
    d^3\sum_{i=1}^d \mathbb E[\xi_{t,i}^8\mid\mathcal F_{t-1}]
    \le
    d^3 d\, 768\,\sigma^8
    =
    768\,d^4\sigma^8.
\]
This concludes the proof.
\end{proof}

\begin{lemma}
\label{lem:Mt-maximal}
Suppose Assumption~\ref{as:gradients} holds. Let $\{\eta_t\}_{t\ge1}$ be a deterministic stepsize sequence satisfying $\sum_{t=1}^\infty \eta_t^4 < \infty$, and define the adapted process
$\{M_t\}_{t\ge1}$ by
\[
    M_t := \sum_{s=1}^t \Delta_s,
    \qquad
    \Delta_s := \eta_s^2\Big(\|\xi_s\|^2
    - \mathbb{E}[\|\xi_s\|^2\mid\mathcal{F}_{s-1}]\Big).
\]
Then $\{M_t\}$ is an $\{\mathcal F_t\}$-martingale and
\[
    \mathbb{E}\!\left[\sup_{t\ge1}|M_t|^4\right] < \infty.
\]
\end{lemma}
\begin{proof}
Since $\Delta_s$ is $\mathcal{F}_s$-measurable, integrable, and satisfies $\mathbb{E}[\Delta_s\mid\mathcal{F}_{s-1}]=0$ almost surely by construction, it follows that $\{M_t\}_{t\ge 1}$ is an $\{\mathcal{F}_t\}$-martingale with differences $\{\Delta_s\}_{s\ge1}$. Fix $T\ge1$. By Doob's $L^4$ maximal inequality for martingales (e.g., \cite[Theorem~4.4.4]{durrett2019probability}),
\begin{equation}
\label{eq:doob4}
    \mathbb{E}\!\left[\sup_{1\le t\le T}|M_t|^4\right]
    \le
    \Big(\frac{4}{3}\Big)^4\,\mathbb{E}\!\left[|M_T|^4\right].
\end{equation}
Next, Burkholder's square-function inequality (upper bound in \cite[Theorem~2.10]{hall2014martingale}) applied to the martingale $\{M_t\}_{t=0}^T$ with differences $\Delta_s$ yields an (explicit) constant $C_1<\infty$ such that
\begin{equation}
\label{eq:burkholder4}
    \mathbb{E}\!\left[|M_T|^4\right]
    \le
    C_1\,\mathbb{E}\!\left[\Big(\sum_{s=1}^T \Delta_s^2\Big)^2\right].
\end{equation}
Combining \eqref{eq:doob4} and \eqref{eq:burkholder4} gives
\begin{equation}
\label{eq:max4-via-square}
    \mathbb{E}\!\left[\sup_{1\le t\le T}|M_t|^4\right]
    \le
    \Big(\frac{4}{3}\Big)^4 C_1\,
    \mathbb{E}\!\left[\Big(\sum_{s=1}^T \Delta_s^2\Big)^2\right].
\end{equation}
Since $\Delta_s^2\ge0$, Minkowski's inequality in $L^2$ implies
\[
    \Big\|\sum_{s=1}^T \Delta_s^2\Big\|_{L^2}
    \le
    \sum_{s=1}^T \|\Delta_s^2\|_{L^2}
    =
    \sum_{s=1}^T \|\Delta_s\|_{L^4}^2,
\]
where $\|\Delta_s^2\|_{L^2}=(\mathbb{E}|\Delta_s|^4)^{1/2}=\|\Delta_s\|_{L^4}^2$. Therefore,
\begin{equation}
\label{eq:minkowski-L2}
    \mathbb{E}\!\left[\Big(\sum_{s=1}^T \Delta_s^2\Big)^2\right]
    =
    \Big\|\sum_{s=1}^T \Delta_s^2\Big\|_{L^2}^2
    \le
    \Big(\sum_{s=1}^T (\mathbb{E}|\Delta_s|^4)^{1/2}\Big)^2.
\end{equation}
To bound $\mathbb{E}|\Delta_s|^4$, set $A_s:=\|\xi_s\|^2\ge0$ and $\mu_s:=\mathbb{E}[A_s\mid\mathcal{F}_{s-1}]$. The inequality $|a-b|^4\le 8(a^4+b^4)$ gives $|A_s-\mu_s|^4\le 8(A_s^4+\mu_s^4)$, and taking conditional expectations yields
\[
    \mathbb{E}\!\left[|A_s-\mu_s|^4\,\middle|\,\mathcal{F}_{s-1}\right]
    \le
    8\,\mathbb{E}[A_s^4\mid\mathcal{F}_{s-1}] + 8\,\mu_s^4.
\]
By conditional Jensen's inequality, $\mu_s^4=(\mathbb{E}[A_s\mid\mathcal{F}_{s-1}])^4
\le \mathbb{E}[A_s^4\mid\mathcal{F}_{s-1}]$, and hence
\[
    \mathbb{E}\!\left[|A_s-\mu_s|^4\,\middle|\,\mathcal{F}_{s-1}\right]
    \le
    16\,\mathbb{E}[A_s^4\mid\mathcal{F}_{s-1}]
    =
    16\,\mathbb{E}\!\left[\|\xi_s\|^{8}\mid\mathcal{F}_{s-1}\right].
\]
Consequently,
\[
    \mathbb{E}\!\left[|\Delta_s|^4\mid\mathcal{F}_{s-1}\right]
    =
    \eta_s^{8}\,
    \mathbb{E}\!\left[|A_s-\mu_s|^4\mid\mathcal{F}_{s-1}\right]
    \le
    16\,\eta_s^{8}\,\mathbb{E}\!\left[\|\xi_s\|^{8}\mid\mathcal{F}_{s-1}\right].
\]
Under Assumption~\ref{as:gradients} (conditional sub-Gaussian noise), the conditional $8$th moment is uniformly bounded; in particular, there exists $C_2<\infty$ such that $\mathbb{E}[\|\xi_s\|^{8}\mid\mathcal{F}_{s-1}]\le C_2$ almost surely for all $s$ (e.g., Lemma~\ref{lem:subg-moments}). Taking expectations gives
\[
    \mathbb{E}|\Delta_s|^4 \le 16C_2\,\eta_s^8,
    \qquad\text{so}\qquad
    (\mathbb{E}|\Delta_s|^4)^{1/2}\le (16C_2)^{1/2}\eta_s^4.
\]
Plugging this bound into \eqref{eq:minkowski-L2} yields
\[
    \mathbb{E}\!\left[\Big(\sum_{s=1}^T \Delta_s^2\Big)^2\right]
    \le
    (16C_2)\Big(\sum_{s=1}^T \eta_s^4\Big)^2
    \le
    (16C_2)\Big(\sum_{s=1}^\infty \eta_s^4\Big)^2
    <\infty.
\]
Combining with \eqref{eq:max4-via-square} shows that there exists a deterministic constant $C_3<\infty$ such that
\[
    \sup_{T\ge1}\mathbb{E}[\sup_{1\le t\le T}|M_t|^4]\le C_3.
\]
Since $\sup_{1\le t\le T}|M_t|^4$ increases to $\sup_{t\ge1}|M_t|^4$ as $T\to\infty$, monotone convergence yields
\[
    \mathbb{E}\!\left[\sup_{t\ge1}|M_t|^4\right] \le C_3 < \infty,
\]
which concludes the proof.
\end{proof}

\begin{lemma}
\label{lem:explicit-threshold}
Fix any $L_\alpha \ge 1$ and define $v_0 := 2(L_\alpha + 1)$. Then, for all $v \ge v_0$,
\[
    v > L_\alpha + \log\Big(\log(\mathrm{e}+v)\Big).
\]
\end{lemma}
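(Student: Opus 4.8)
The claim is that for $L_\alpha \ge 1$ and $\nu_0 := 2(L_\alpha+1)$, every $v \ge \nu_0$ satisfies $v > L_\alpha + \log\log(\mathrm{e}+v)$. The plan is to show that the right-hand side, call it $h(v) := L_\alpha + \log\log(\mathrm{e}+v)$, is dominated by $v$ throughout $[\nu_0,\infty)$ by a combination of a crude inequality at the left endpoint and a derivative/monotonicity argument for the surplus $v - h(v)$.

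Let me think about what magnitudes are in play. At $v = \nu_0 = 2(L_\alpha+1)$, the two troublesome terms are $L_\alpha$ and $\log\log(\mathrm{e}+v)$. The point of choosing $\nu_0 = 2(L_\alpha+1)$ is that $v \ge 2L_\alpha + 2$, so $v - L_\alpha \ge L_\alpha + 2 \ge 3$, leaving a comfortable margin to absorb the iterated-log term. So the cleanest route is a two-part argument.

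First, I would verify the inequality at the base point $v = \nu_0$. Here I need $2(L_\alpha+1) > L_\alpha + \log\log(\mathrm{e}+2(L_\alpha+1))$, i.e. $L_\alpha + 2 > \log\log(\mathrm{e}+2L_\alpha+2)$. The right side grows extremely slowly in $L_\alpha$: since $\mathrm{e}+2L_\alpha+2 \le \mathrm{e}^{2L_\alpha+2}$ crudely (as $2L_\alpha+2 \ge 2$ makes $\mathrm{e}^{2L_\alpha+2}$ enormous), one gets $\log(\mathrm{e}+2L_\alpha+2) \le 2L_\alpha+2$, hence $\log\log(\mathrm{e}+2L_\alpha+2) \le \log(2L_\alpha+2) \le 2L_\alpha+2$, and one easily checks $\log(2L_\alpha+2) < L_\alpha+2$ for all $L_\alpha \ge 1$ (the left side is logarithmic, the right linear, and at $L_\alpha=1$ we have $\log 4 \approx 1.39 < 3$). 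I would pick whatever crude bound makes this endpoint check a clean one-liner.

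Second, I would show the surplus $\phi(v) := v - L_\alpha - \log\log(\mathrm{e}+v)$ is increasing on $[\nu_0,\infty)$, which together with $\phi(\nu_0) > 0$ gives $\phi(v) > 0$ everywhere. Differentiating, $\phi'(v) = 1 - \frac{1}{(\mathrm{e}+v)\log(\mathrm{e}+v)}$, and since $v \ge 0$ gives $\mathrm{e}+v \ge \mathrm{e}$ and $\log(\mathrm{e}+v) \ge 1$, the subtracted term is at most $1/(\mathrm{e}\cdot 1) < 1$, so $\phi'(v) > 0$ for all $v \ge 0$. The main (very mild) obstacle is just being careful that $\log\log(\mathrm{e}+v)$ is well-defined and nonnegative on the whole range, which holds since $\mathrm{e}+v \ge \mathrm{e}$ forces $\log(\mathrm{e}+v) \ge 1$ and hence $\log\log(\mathrm{e}+v) \ge 0$. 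Combining the increasing surplus with the strict positivity at $\nu_0$ completes the proof; the endpoint check is really the only place any arithmetic is needed, and even there only a loose bound is required.
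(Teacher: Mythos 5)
Your proposal is correct and takes essentially the same approach as the paper: both arguments show that the surplus $v - L_\alpha - \log\log(\mathrm{e}+v)$ is increasing on the relevant range and strictly positive at the endpoint $v=\nu_0$, using a crude bound of the type $\log\big(2(L_\alpha+1)\big) \le L_\alpha+1$ there. The only cosmetic difference is that you differentiate the exact surplus $\phi(v)$ directly (giving $\phi'(v) = 1 - \tfrac{1}{(\mathrm{e}+v)\log(\mathrm{e}+v)} > 0$ for all $v\ge 0$), whereas the paper first bounds $\log\log(\mathrm{e}+v) \le \log v$ for $v \ge 4$ and applies monotonicity to the simpler function $g(v) = v - L_\alpha - \log v$; both routes are equally valid.
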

\begin{proof}
Since $L_\alpha \ge 1$, we have $v_0 = 2(L_\alpha+1) \ge 4$. For any $v\ge 4$, we have $\mathrm{e}^v \ge \mathrm{e}+v$, which implies that $\log(\mathrm{e}+v) \le v$. Since $\log(\mathrm{e}+v) >1$, and therefore $\log\Big(\log(\mathrm{e}+v)\Big) \le \log v$ for all $v \ge 4$. Thus, for $v \ge v_0$, we have
\[
    v - L_\alpha - \log\Big(\log(\mathrm{e}+v)\Big)
    \ge
    v - L_\alpha - \log v
    =: g(v)
\]
Since $g'(v) = 1 - \tfrac{1}{v} \ge 0$ for all $v \ge 1$, we have that $g$ is increasing and
\[
    g(v) \ge g(v_0) = L_\alpha+2 - \log\Big(2(L_\alpha+1)\Big).
\]
Since $\log(2x) \le x$ for all $x \ge 1$, we obtain
\[
    g(v_0)
    \ge
    L_\alpha+2 - (L_\alpha+1)
    = 1
    > 0.
\]
Therefore $v - L_\alpha - \log(\log(\mathrm{e}+v)) > 0$ for all $v \ge v_0$, which concludes the proof.
\end{proof}

\begin{lemma}
\label{lem:poly-sum}
Let $\gamma\in(0,1)$ and define
\[
    S_t^{(\gamma)} := \sum_{s=1}^t s^{-\gamma}
    \qquad \forall t\ge 1.
\]
Then, for all $t\ge 1$,
\[
    S_t^{(\gamma)}
    \;\ge\;
    \frac{t^{1-\gamma}-1}{1-\gamma}.
\]
\end{lemma}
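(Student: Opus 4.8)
The plan is to prove the bound by comparing the sum $S_t^{(\gamma)}$ with the integral of $x^{-\gamma}$, exploiting the fact that $x \mapsto x^{-\gamma}$ is strictly decreasing on $(0,\infty)$ whenever $\gamma > 0$. Since each summand $s^{-\gamma}$ is the value of this decreasing function at the \emph{left} endpoint of the interval $[s,s+1]$, it dominates the average of the function over that interval, which is exactly the mechanism that converts the discrete sum into a lower-bounding integral.

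First I would fix $s\ge 1$ and note that $x^{-\gamma}\le s^{-\gamma}$ for every $x\in[s,s+1]$, whence $s^{-\gamma}\ge\int_s^{s+1}x^{-\gamma}\,dx$. Summing this inequality over $s=1,\dots,t$ and telescoping the integrals gives $S_t^{(\gamma)}\ge\int_1^{t+1}x^{-\gamma}\,dx$.

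Next I would evaluate the integral via the antiderivative $x^{1-\gamma}/(1-\gamma)$, which is legitimate and finite precisely because $\gamma\in(0,1)$ forces $1-\gamma\in(0,1)$. This produces $S_t^{(\gamma)}\ge\frac{(t+1)^{1-\gamma}-1}{1-\gamma}$, a bound that is in fact slightly stronger than the claimed one. To recover the stated inequality, I would use that $1-\gamma>0$ makes $u\mapsto u^{1-\gamma}$ increasing, so $(t+1)^{1-\gamma}\ge t^{1-\gamma}$; subtracting $1$ and dividing by the positive number $1-\gamma$ preserves the inequality and yields $S_t^{(\gamma)}\ge\frac{t^{1-\gamma}-1}{1-\gamma}$.

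There is no genuinely hard step: the argument is a routine integral comparison. The only points requiring minor care are getting the direction of the monotonicity right (lower-bounding the sum requires comparing each term to the integral over the interval to its \emph{right}, so that the left-endpoint value dominates), and tracking the sign $1-\gamma>0$ so that both the integral evaluation and the final relaxation from $t+1$ to $t$ proceed in the correct direction.
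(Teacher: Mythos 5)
Your proof is correct and follows exactly the same route as the paper: lower-bound each term $s^{-\gamma}$ by $\int_s^{s+1}x^{-\gamma}\,dx$ using monotonicity, evaluate $\int_1^{t+1}x^{-\gamma}\,dx=\frac{(t+1)^{1-\gamma}-1}{1-\gamma}$, and relax via $(t+1)^{1-\gamma}\ge t^{1-\gamma}$. No gaps; the monotonicity directions and the sign of $1-\gamma$ are handled correctly.
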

\begin{proof}
The function $x\mapsto x^{-\gamma}$ is positive and strictly decreasing on $[1,\infty)$, for any $\gamma>0$. Therefore, for each integer $t\ge1$,
\[
    \sum_{s=1}^t s^{-\gamma}
    \;\ge\;
    \int_{1}^{t+1} x^{-\gamma}\,dx.
\]
Evaluating the integral for $\gamma\in(0,1)$ gives
\[
    \int_{1}^{t+1} x^{-\gamma}\,dx
    \;=\;
    \left[\frac{x^{1-\gamma}}{1-\gamma}\right]_{x=1}^{x=t+1}
    \;=\;
    \frac{(t+1)^{1-\gamma}-1}{1-\gamma}.
\]
To conclude, note that $(t+1)^{1-\gamma}\ge t^{1-\gamma}$ for all $t\ge 1$ and $\gamma\in(0,1)$.
\end{proof}

\begin{lemma}
\label{lem:harmonic-sum}
Define
\[
    H_t := \sum_{s=1}^t \frac{1}{s},
    \qquad \forall t\ge 1.
\]
Then for all $t\ge 1$,
\[
    H_t
    \;\ge\;
    \log(t).
\]
\end{lemma}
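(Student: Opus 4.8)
The plan is to mirror the integral-comparison argument already used in the proof of Lemma~\ref{lem:poly-sum}, specializing the exponent to the boundary case $\gamma = 1$. The key observation is that the function $x \mapsto 1/x$ is positive and strictly decreasing on $[1,\infty)$, so that evaluating this integrand at the left endpoint of each unit interval overestimates the integral over that interval.

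Concretely, I would first record the pointwise bound: for each integer $s \ge 1$, monotonicity of $1/x$ on $[s,s+1]$ gives $\frac{1}{s} \ge \int_s^{s+1} \frac{dx}{x}$. Summing this inequality over $s = 1,\dots,t$ and using additivity of the integral telescopes the right-hand side into a single integral over $[1,t+1]$:
\[
    H_t = \sum_{s=1}^t \frac{1}{s} \ge \sum_{s=1}^t \int_s^{s+1} \frac{dx}{x} = \int_1^{t+1} \frac{dx}{x} = \log(t+1).
\]
Since $\log$ is increasing and $t+1 \ge t$, I conclude $H_t \ge \log(t+1) \ge \log(t)$, which is the claimed bound (in fact slightly stronger, with $\log(t+1)$ in place of $\log t$).

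This statement is entirely elementary and presents no genuine obstacle; the only point requiring care is the direction of the Riemann-sum comparison, namely that for a decreasing integrand the left-endpoint value dominates the integral over each unit interval. Everything else is routine, and the argument is structurally identical to the $\gamma \in (0,1)$ case handled in Lemma~\ref{lem:poly-sum}, so no new ideas are needed.
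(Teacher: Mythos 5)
Your proof is correct and follows exactly the paper's argument: comparing the sum to $\int_1^{t+1}\frac{dx}{x}=\log(t+1)$ via monotonicity of $1/x$, then using $\log(t+1)\ge\log t$. No differences worth noting.
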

\begin{proof}
The function $x\mapsto 1/x$ is positive and strictly decreasing on $[1,\infty)$. Hence, for each integer $t\ge1$,
\[
    \sum_{s=1}^t \frac{1}{s}
    \;\ge\;
    \int_{1}^{t+1} \frac{dx}{x}
    \;=\;
    \log(t+1),
\]
The final statement follows from the monotonicity of the logarithm: $\log(t+1)\ge \log t$ for all $t\ge 1$.
\end{proof}

\begin{lemma}
\label{lem:invert-b}
Let $b(t):=(1+\log t)/\sqrt t$ for $t\ge 1$, and define
\[
    T(u):=\inf\{t\ge 1:\ b(t)\le u\},
    \qquad u\ge 0,
\]
with the convention $\inf\emptyset=\infty$. Then there exists a universal constant $K<\infty$ such that for all $u\in(0,1/2)$,
\begin{equation}\label{eq:T-inv}
    T(u)\le K\,u^{-2}\,\log^2\!\frac{1}{u}.
\end{equation}
\end{lemma}
\begin{proof}
Fix $u\in(0,1/2)$ and set $L:=\log(1/u)$, so $L\ge \log 2$. Let $c\ge 1$ be a numerical
constant to be chosen and define $t_0:=c\,u^{-2}L^2$ and $t:=\lceil t_0\rceil$.
Since $u^{-2}\ge 4$ and $L\ge \log 2$, we have $t_0\ge 4c(\log 2)^2\ge 1$, hence
$t\le t_0+1\le 2t_0$ and therefore
\[
    \log t \le \log(2t_0)=\log(2c)+2\log\!\frac{1}{u}+2\log L
    =\log(2c)+2L+2\log L.
\]
Because $L\ge \log 2>0$ we have $\log L\le L$, which yields
\[
    1+\log t \le 1+\log(2c)+4L.
\]
Moreover, $\sqrt t\ge \sqrt{t_0}=\sqrt c\,u^{-1}L$, hence
\[
    b(t)=\frac{1+\log t}{\sqrt t}
    \le
    \frac{1+\log(2c)+4L}{\sqrt c\,u^{-1}L}
    =
    u\left(\frac{1+\log(2c)}{\sqrt c\,L}+\frac{4}{\sqrt c}\right)
    \le
    u\left(\frac{1+\log(2c)}{\sqrt c\,\log 2}+\frac{4}{\sqrt c}\right),
\]
where we used $L\ge \log 2$ in the last step. Choose $c$ large enough so that
$\frac{1+\log(2c)}{\sqrt c\,\log 2}+\frac{4}{\sqrt c}\le 1$
(which is possible since $(\log c)/\sqrt c\to 0$). Then $b(t)\le u$, and by the
definition of $T(u)$ we have $T(u)\le t\le 2t_0$. Therefore,
\[
    T(u)\le 2c\,u^{-2}L^2
    = 2c\,u^{-2}\log^2\!\frac{1}{u}.
\]
Setting $K:=2c$ proves \eqref{eq:T-inv}.
\end{proof}

\end{document}